\newcommand{\ls}{\leqslant}
\newcommand{\gs}{\geqslant}
\renewcommand{\div}{\operatorname{div}}
\newcommand{\R}{\mathbb R}
\newcommand{\per}{\operatorname{Per}}
\newcommand{\eps}{\varepsilon}
\newcommand{\BV}{\mathrm{BV}}
\newcommand{\dd}{\, \mathrm{d}}
\newcommand{\Id}{\mathrm{Id}}
\newcommand\restr[2]{{\left.\kern-\nulldelimiterspace #1 \vphantom{\big|} \right|_{#2}}}
\newcommand{\TV}[1]{\mathrm{TV}{(#1)}}
\newcommand{\TVO}[1]{\mathrm{TV}{(#1\, ; \, \Omega)}}
\renewcommand{\L}{\mathrm{L}}
\newcommand{\W}{\mathrm{W}}
\newcommand{\Ltwou}{{\mathrm{L}^2(\Omega)}}
\newcommand{\Ltwof}{{\mathrm{L}^2(\Sigma)}}
\newcommand{\Fd}{\mathcal{F}_\alpha}
\newcommand{\Fn}{\hat{\mathcal{F}}_\alpha}
\newcommand{\Da}[1]{\mathcal{D}_\alpha(#1)}
\newcommand{\Br}{B(x,r)}
\newcommand{\ua}{u_\alpha}
\newcommand{\va}{v_\alpha}
\newcommand{\kaw}{\kappa_{\alpha, w}}
\newcommand{\pa}{p_\alpha}
\newcommand{\range}[1]{\mathcal{R}(#1)}
\newcommand{\abs}[1]{\lvert#1\rvert}
\newcommand{\norm}[1]{\left\|#1\right\|}
\newcommand{\set}[1]{\left\{#1\right\}}
\newcommand{\inner}[2]{\left<#1,#2\right>}
\newcommand{\supp}{\mathop{\mathrm{supp}}}
\newcommand{\udag}{u^\dagger}
\DeclareMathOperator{\Rcirc}{R_{circ}}
\DeclareMathOperator{\Rcircadj}{R_{circ}^*}
\newcommand{\scal}[2]{ \left \langle #1, \ #2 \right \rangle}
\newtheorem{theorem}{Theorem}
\newtheorem{prop}{Proposition}
\newtheorem{lemma}{Lemma}
\newtheorem{definition}{Definition}
\theoremstyle{remark}
\newtheorem{remark}{Remark}
\begin{document}
\title{A note on convergence of solutions of total variation regularized linear inverse problems}
\author{Jos\'e A. Iglesias\thanks{Johann Radon Institute for Computational and Applied Mathematics (RICAM), Austrian Academy of Sciences, Linz, Austria (\texttt{jose.iglesias{@}ricam.oeaw.ac.at}, \texttt{gwenael.mercier{@}ricam.oeaw.ac.at})} , Gwenael Mercier\footnotemark[1] , Otmar Scherzer\footnotemark[1] \thanks{Computational Science Center, University of Vienna, Austria (\texttt{otmar.scherzer{@}univie.ac.at})}}
\date{}
\maketitle
\begin{abstract}
In a recent paper by A. Chambolle et al. \cite{ChaDuvPeyPoo17} it was proven that if the subgradient of the total variation at the noise free data is not empty, the level-sets of the total variation denoised solutions converge to the level-sets of the noise free data with respect to the Hausdorff distance. The condition on the subgradient corresponds to the source condition introduced by Burger and Osher \cite{BurOsh04}, who proved convergence rates results with respect to the Bregman distance under this condition. We generalize the result of Chambolle et al. to total variation regularization of general linear inverse problems under such a source condition. As particular applications we present denoising in bounded and unbounded, convex and non convex domains, deblurring and inversion of the circular Radon transform. In all these examples the convergence result applies. Moreover, we illustrate the convergence behavior through numerical examples.
\end{abstract}

\section{Introduction}
In this paper we are concerned with total variation regularization of linear inverse problems 
\begin{equation} 
\label{eq:Auf}
Au=f\,,
\end{equation}
for functions $u: \Omega \to \R$, where 
\begin{itemize}
\item $\Omega = \R^2$ or 
\item $\Omega$ is a bounded Lipschitz domain $D \subset \R^2$,
\end{itemize}
and $A : \Ltwou \to \Ltwof$ is a linear bounded (typically compact) operator. 

Since in general the solution of \eqref{eq:Auf} is ill-posed, some sort of regularization needs to be employed.
The method considered in this paper is total variation regularization, in which a regularization parameter $\alpha > 0$ is chosen, and either of the two following minimization problems is solved:
\begin{itemize}
\item The Dirichlet (resp. full space) problem consisting in computation of the minimizer of the functional
\begin{equation} 
\Fd(u) := \frac{1}{2}\norm{Au-f}_\Ltwof^2 + \alpha \TV{u}  \label{eq:primalDirichlet}
\end{equation}
over either one of the sets of functions
\begin{equation*}
u \in \L^2(D) \text{ or } u \in \L^2(\R^2)\;.
\end{equation*}
Here $\TV{u} \in [0, +\infty]$ denotes the total variation in $\R^2$ of the function $u$ (or of its extension by zero). 

\item The Neumann problem consists in minimizing 
\begin{equation}\label{eq:primalNeumann}
\Fn(u):=\frac{1}{2}\norm{Au-f}_\Ltwof^2 + \alpha\TVO{u} \text{ over }u \in \L^2(\Omega)\;.
\end{equation}
Here $\TVO{u}$ is the total variation of $u$ computed in the open set $\Omega$.
\end{itemize}

Previously, Chambolle et al \cite{ChaDuvPeyPoo17} proved that in the full space problem for $A=\Id$ the level-sets of regularized solutions converge in Hausdorff distance to those of $f$. The main goal of this paper is to show that the techniques of \cite{ChaDuvPeyPoo17} apply to TV-regularization for general linear ill-posed problems. We therefore show that the level-sets of the TV-regularized solutions Hausdorff converge assuming a source condition and adequate parameter choices. Furthermore, we consider different boundary conditions (Dirichlet, Neumann and full space) which not only have significant impact (more than in the denoising case) on the reconstructions, but also require new ideas for the proofs.

One of the main reasons for the use of total variation regularization is dealing with solutions which contain discontinuities, in particular piecewise constant functions representing the contours of separate objects. Hausdorff convergence of level-sets is in fact particularly well suited to such a situation, since it means (see Definition \ref{def:hausdist}) that the maximal distance between points of the regularized objects and those of the noise free solutions goes to zero. Such a convergence is desirable both in imaging applications, where it has a direct visual interpretation, as well as in identification of inclusions, where it corresponds to uniform convergence of the inclusions themselves.

\paragraph{Structure of the paper.}
The outline of the paper is as follows: In Section \ref{sec:ca} we prove existence of minimizers of $\Fd$ and $\Fn$, review dual formulations of the corresponding optimization problems and explore the convergence of dual solutions (see \eqref{eq:duality}) under vanishing data perturbations, while assuming the source condition. In Section \ref{sec:cvls}, we see that the curvature of level-sets of minimizers is strongly linked to these dual variables and we explain (following \cite{ChaDuvPeyPoo17}) how the convergence of the curvatures implies the main result on Hausdorff convergence of level-sets. In Section \ref{sec:density} we give a proof, for each of the different boundary conditions considered, of the main ingredient needed for the convergence: density estimates \eqref{eq:densityest} for the level-sets. Finally, Section \ref{sec:examples} contains some inverse problems examples, where the results of previous sections apply. We discuss the effect of boundary settings on the regularized solutions. Moreover, some numerical results are presented.

\subsection{Notation and spaces}
We recall that the total variation of a function $u \in \L^1_{\text{loc}}(\Omega)$ is defined by
\begin{equation}\label{eq:defTV}\TVO{u} := \abs{Du}(\Omega) =  
 \sup \left\{\int_\Omega u~ \div z~dx \, \middle\vert\, z \in C^\infty_0(\Omega\, ;\,  \R^2),\|z\|_{\L^\infty(\Omega)} \ls 1\right\}.\end{equation}
If $\TVO{u}$ is finite, then the distributional derivative $Du$ is a vector-valued Radon measure on $\Omega$. We also emphasize that for $\Omega = \R^2$ we write
\begin{equation*}
\TV{u}:= \TV{u;\R^2},
\end{equation*}
and that for $\Omega=\R^2$, the Dirichlet and Neumann problems coincide.

We also recall that for every Lebesgue measurable $E \subset \Omega$ the perimeter of $E$ in $\Omega$ is defined to be
$$ \per(E \, ;\, \Omega) := \TVO{1_{E}},$$
where $1_E$ is the characteristic function of $E$, that is, $1_E(x)=1$ if $x \in E$, and $1_E(x)=0$ otherwise. If this quantity is finite, $E$ is said to have finite perimeter.
When $\Omega = \R^2$ or when $\Omega$ is clear from the context we skip the second argument in the above notation.

If $\Omega$ is a bounded domain $D$, we can identify $\L^2(D)$ with the set of extended functions $\set{ u \in \L^2(\R^2) \, \middle \vert \, \text{supp}(u) \subset \overline D}$ and since we have the inclusion $\L^2(D) \subset \L^1(D),$ candidates for minimizers of $\Fd$ are in
\[\set {v \in \BV(\R^2) \ \middle \vert \ v \equiv 0 \text{ on } \R^2 \setminus D},\]
where we adopt the standard definition
\[\BV(\Omega):=\set{ u \in \L^1(\Omega) \ \middle \vert \ \TV{u;\Omega} < +\infty}.\]

This corresponds to assuming a homogeneous Dirichlet boundary condition and possible jumps at the boundary are taken into account. On the other hand, if $\Omega = \R^2$, minimizers of $\Fn$ and $\Fd$ are identical, and 
are elements of the set
\[\set{ u \in \L^2(\R^2) \, \middle \vert \, \TV{u} < +\infty}.\]
When minimizing $\Fn$, corresponding to the homogeneous Neumann condition, the natural space is $\BV(\Omega) \cap \Ltwou$. The influence of $\Omega$ and the boundary conditions on the solutions is discussed in Section \ref{sec:examples}.

We stress that the minimization problems for the functionals \eqref{eq:primalDirichlet} and \eqref{eq:primalNeumann} that we deal with in all of this paper are considered over $\L^2(\Omega) \subset \L^1_{\text{loc}}(\Omega)$, over which the total variation, as defined in \eqref{eq:defTV} may be $+\infty$. In correspondence, we will consider the subgradient of a functional $F: \L^2(\Omega) \to \R \cup \{+\infty\}$, defined by
\begin{equation*}\label{eq:subgrad}\partial F(u) := \left\{ v \in \L^2(\Omega) \, \middle\vert\, F(u+h)-F(u) \gs \scal{v}{h}_{\L^2(\Omega)}\text{ for all }h \in \L^2(\Omega)\right\}.\end{equation*}
In particular, if $F$ is proper (not identically $+\infty$) and for some $u$ we have $F(u)=+\infty$, then $\partial F(u)= \emptyset$.

\section{Dual solutions and source condition}\label{sec:ca}

\begin{prop} The functional $\Fd$ defined in \eqref{eq:primalDirichlet} has at least one 
minimizer in $\L^2(\Omega)$. If $A$ is injective, there exists a unique minimizer. 
\label{prop:existmin}
\end{prop}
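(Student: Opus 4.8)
The plan is to argue by the direct method of the calculus of variations. Since $\Fd(0) = \tfrac12\norm{f}_\Ltwof^2 < +\infty$ and $\Fd \geq 0$, the infimum $m := \inf \Fd$ is finite, and I may pick a minimizing sequence $(u_n) \subset \L^2(\Omega)$ with $\Fd(u_n) \to m$. For $n$ large we have $\Fd(u_n) \le m+1$, so that both $\norm{Au_n - f}_\Ltwof$ and, crucially, $\TV{u_n} \le (m+1)/\alpha$ are bounded independently of $n$.

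The first step is to turn the total variation bound into an $\L^2$ bound. In dimension two the Sobolev inequality for functions of bounded variation reads $\norm{w}_{\L^2(\R^2)} \le C\, \TV{w}$ (the critical exponent being $2/(2-1) = 2$), valid for the zero extensions of our candidates since each $u_n \in \L^2$ has super-level sets of finite measure. Hence $(u_n)$ is bounded in $\L^2(\Omega)$, and by reflexivity I extract a subsequence (not relabelled) with $u_n \rightharpoonup u$ weakly in $\L^2(\Omega)$. In the Dirichlet case I must also check admissibility: testing the weak convergence against $\varphi \in C^\infty_0(\R^2 \setminus \overline D)$ shows that $u$ vanishes outside $\overline D$, so $u$ lies in the correct constraint set.

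It remains to show $\Fd(u) \le \liminf_n \Fd(u_n) = m$. The data term $u \mapsto \tfrac12\norm{Au-f}_\Ltwof^2$ is convex and strongly continuous on $\L^2(\Omega)$ (as $A$ is bounded), hence weakly lower semicontinuous. The delicate term is the total variation, and this is where I expect the main obstacle to lie: weak $\L^2$ convergence does not supply the strong $\L^1_{\mathrm{loc}}$ convergence under which $\TV{\cdot}$ is classically lower semicontinuous, and on $\R^2$ there is no compact embedding to recover it due to loss of mass at infinity. I resolve this directly from the dual definition \eqref{eq:defTV}: for each fixed admissible $z$, the functional $w \mapsto \int_\Omega w \, \div z \dd x$ is weakly continuous on $\L^2(\Omega)$ because $\div z \in \L^2(\Omega)$, and a supremum of weakly continuous functionals is weakly lower semicontinuous. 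Thus $\TV{u} \le \liminf_n \TV{u_n}$, and combining the two estimates gives $\Fd(u) \le m$, so that $u$ is a minimizer.

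For uniqueness under injectivity of $A$, I observe that $\Fd$ is strictly convex. Indeed $\TV{\cdot}$ is convex, while $v \mapsto \tfrac12\norm{v-f}_\Ltwof^2$ is strictly convex and $u \mapsto Au$ is injective, so $u \mapsto \tfrac12\norm{Au-f}_\Ltwof^2$ is strictly convex; the sum of a strictly convex and a convex functional is strictly convex. A strictly convex functional admits at most one minimizer, which together with the existence just proven yields the claim.
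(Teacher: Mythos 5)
Your proof is correct, and while its skeleton (direct method: minimizing sequence, Sobolev inequality for coercivity, weak $\L^2$ compactness, lower semicontinuity) matches the paper's, you handle the key step differently. The paper invokes the compactness theorem for $\BV$ to extract a subsequence converging \emph{strongly} in $\L^1_{\text{loc}}(\R^2)$ alongside the weak $\L^2$ convergence, and then uses the classical lower semicontinuity of $\TV{\cdot}$ under strong $\L^1_{\text{loc}}$ convergence. You instead prove weak $\L^2$ lower semicontinuity of the total variation directly from the dual definition \eqref{eq:defTV}: since each test field $z$ has $\div z \in C^\infty_0 \subset \L^2$, the map $u \mapsto \int u \, \div z$ is a continuous linear (hence weakly continuous) functional, and a supremum of such functionals is weakly lower semicontinuous. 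This is a genuine simplification for the purposes of this proposition --- it needs no compact embedding at all, treats $\Omega = \R^2$ and bounded $D$ uniformly, and sidesteps any worry about loss of mass at infinity; it is the standard convex-analysis fact that a convex, strongly lsc functional on a Hilbert space is weakly lsc, here obtained by hand. The paper's compactness route costs an extra citation but delivers strong $\L^1_{\text{loc}}$ convergence of the minimizing sequence, a mode of convergence that the paper reuses repeatedly later (e.g.\ in Lemma \ref{lem:uconv} and Proposition \ref{prop:conv}), which explains its choice. Your explicit verification that the weak limit satisfies the Dirichlet constraint (vanishing outside $\overline D$) and your strict-convexity argument for uniqueness via injectivity of $A$ are both correct and match what the paper leaves implicit or states in one line.
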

\begin{proof}Let $(u_k)$ be a minimizing sequence for $\Fd$. Since $u_k \in \Ltwou$ implies $u_k \in \L^1_{\text{loc}}(\R^2)$ and we work in dimension $2$, we can use Sobolev's inequality \cite[Theorem 3.47]{AmbFusPal00} to get
\[\norm{u_k}_\Ltwou \ls C \, \TV{u_k}.\]

Now, the right hand side is bounded uniformly in $k$ so that the Banach-Alaoglu theorem for $\Ltwou$ and a compactness result \cite[Theorem 3.23]{AmbFusPal00} provide a subsequence $(u_k)$ (not relabelled) that converges both weakly in $\Ltwou$ and strongly in $\L^1_{\text{loc}}(\R^2)$ to some limit $\ua$. Since $A$ is a bounded linear operator, $Au_k$ also converges weakly to $A\ua$ in $\Ltwof$. Lower semicontinuity of the norm with respect to weak convergence, and of the total variation with respect to strong $\L^1_{\text{loc}}(\R^2)$ convergence \cite[Remark 3.5]{AmbFusPal00} proves that $u_\alpha$ is a minimizer of $\mathcal{F}_\alpha$.

The uniqueness statement is straightforward, since $\norm{\cdot}_\Ltwof^2$ is strictly convex.
\end{proof}

\begin{remark}
Minimization of $\Fn$ and $\Fd$ can produce markedly different results. An example is the choice $\Omega=(-1,1)^2$, $\Sigma=(-1+\eta,1-\eta)^2$  for some $\eta \in (0,1)$, $\alpha=1$, $f=0$ and $A$ defined by 
\[Au(x)=u(x) - \frac{1}{4\eta^2}\int_{(-\eta, \eta)^2} u(x+y) \dd y,\]
continuous since $\norm{Au}_\Ltwof \ls 2 \norm{u}_\Ltwou$ by the triangle inequality and Young's inequality for convolutions. In this situation, the functional \eqref{eq:primalNeumann} is not coercive: considering the sequence $u_n := n 1_\Omega$, we have that $\hat{\mathcal{F}}(u_n)=0$ for all $n$, but $u_n$ is not bounded in $\L^2(\Omega)$. The underlying reason is that constant functions 
are annihilated by $A$, that is, $A\mathds{1} = 0$, where $\mathds{1}$ represents the constant function with value $1$ (this situation has also been discussed in \cite{Ves01}). In contrast, when working with Dirichlet boundary we have $\TV{u_n}=4n$. Note that in the denoising case ($A=\Id$) the data term makes the functional coercive in $\L^2(\Omega)$ even when using $\TVO{u}$.
\label{rem:boundary}
\end{remark}

\begin{prop}
The functional $\Fn$ defined in \eqref{eq:primalNeumann}, considered in $\Ltwou$, has at least one minimizer. 
If $A$ is injective, there exists a unique minimizer. 
\end{prop}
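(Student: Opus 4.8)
The plan is to mirror the proof of Proposition \ref{prop:existmin}, the only genuinely new difficulty being that $\Fn$ need not be coercive (as Remark \ref{rem:boundary} shows), so a bounded minimizing sequence is not immediate. Since for $\Omega=\R^2$ the Neumann and Dirichlet functionals coincide and existence is already covered by Proposition \ref{prop:existmin}, I restrict to the case where $\Omega=D$ is a bounded (connected) Lipschitz domain. Let $(u_k)$ be a minimizing sequence for $\Fn$. Both terms of $\Fn$ being nonnegative and their sum being bounded along $(u_k)$, the quantities $\TVO{u_k}$ and $\norm{Au_k-f}_\Ltwof$ are bounded uniformly in $k$. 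The obstruction to boundedness in $\Ltwou$ is precisely that $\TVO{\cdot}$ no longer controls the $\L^2$ norm through a Sobolev inequality as in Proposition \ref{prop:existmin}, because it vanishes on constants.

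To remedy this, I would split off the constant part. Writing $\bar u_k := \frac{1}{\abs{D}}\int_D u_k \dd x$ and $w_k := u_k - \bar u_k \mathds{1}$, the Poincar\'e--Wirtinger inequality on the bounded Lipschitz domain $D$ (in dimension two the relevant Sobolev exponent is $2$) gives $\norm{w_k}_\Ltwou \ls C_D \, \TVO{w_k} = C_D \, \TVO{u_k}$, so $(w_k)$ is bounded in $\BV(D)\cap\Ltwou$. It remains to control the scalars $\bar u_k$, and here I distinguish two cases. If $A\mathds{1}\neq 0$, then from $Au_k = \bar u_k \, A\mathds{1} + Aw_k$, the boundedness of $\norm{Au_k-f}_\Ltwof$ and of $\norm{Aw_k}_\Ltwof$ (a consequence of the bound on $w_k$ and continuity of $A$) forces $\abs{\bar u_k}$ to be bounded, whence $(u_k)$ itself is bounded in $\Ltwou$. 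If instead $A\mathds{1}=0$, then both $A$ and $\TVO{\cdot}$ are invariant under adding constants, so $\Fn(w_k)=\Fn(u_k)$; replacing $u_k$ by $w_k$ I obtain a minimizing sequence already bounded in $\Ltwou$.

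In either case I am left with a minimizing sequence bounded in $\BV(D)\cap\Ltwou$, and from here the argument of Proposition \ref{prop:existmin} applies essentially verbatim: the compact embedding $\BV(D)\hookrightarrow\L^1(D)$ together with the Banach--Alaoglu theorem yields a subsequence converging strongly in $\L^1(D)$ and weakly in $\Ltwou$ to some limit, so that $Au_k$ converges weakly to its image in $\Ltwof$; lower semicontinuity of $\TVO{\cdot}$ with respect to $\L^1$ convergence and of $\norm{\cdot-f}_\Ltwof^2$ with respect to weak convergence then show that the limit is a minimizer. Uniqueness under injectivity of $A$ follows exactly as before: the map $u\mapsto \tfrac12\norm{Au-f}_\Ltwof^2$ is strictly convex whenever $A$ is injective, and adding the convex term $\alpha\TVO{u}$ preserves strict convexity.

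I expect the main obstacle to be the second paragraph, namely the control of the constant component, since this is exactly where the possible non-coercivity of $\Fn$ enters and must be resolved through the case distinction on whether $A\mathds{1}$ vanishes; the compactness and lower semicontinuity steps are routine once a bounded minimizing sequence is secured.
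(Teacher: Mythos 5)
Your proof is correct and follows essentially the same route as the paper: split the minimizing sequence into a mean-free part controlled via the Poincar\'e inequality plus a constant, then handle the cases $A\mathds{1}=0$ (where subtracting the constant gives another minimizing sequence) and $A\mathds{1}\neq 0$ (where boundedness of $Au_k$ and $Aw_k$ forces the constants to be bounded) exactly as the paper does, before concluding by compactness and lower semicontinuity.
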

\begin{proof}
As noticed in Remark \ref{rem:boundary}, the situation is slightly different from Prop. \ref{prop:existmin}. Indeed, if as above, $(u_k)$ is a minimizing sequence for $\Fn$, Poincar\'e inequality gives the existence of a constant $m_k$ such that
\begin{equation} 
\Vert u_k - m_k \Vert_{\Ltwou} \ls C \TV{u_k} \ls C. \label{eq:soboTV}
\end{equation}
Now, if the constant functions are annihilated by $A$ (that is, $A\mathds{1} = 0$), then $A u_k -f = A (u_k-m_k) - f$ and 
$\TVO{u_k - m_k}=\TVO{u_k}$, so that $v_k := u_k - m_k$ is also a minimizing sequence. Since $v_k$ is bounded in $\L^2(\Omega)$ by 
\eqref{eq:soboTV}, it converges weakly to some $v \in \Ltwou$. Similarly to Prop. \ref{prop:existmin}, one can use compactness and lower semicontinuity to show that $v$ is a minimizer of \eqref{eq:primalNeumann}. \\
On the other hand, if $A \mathds{1} \neq 0$, the minimizing property for $u_k$ implies that $(Au_k -f)$ is uniformly bounded in $k$. Therefore, so is $(Au_k)$. The Poincar\'e inequality implies that $A(u_k - m_k)$ is also bounded uniformly in $k$, which forces $(Am_k)$ to be bounded too. The boundedness of $A$ gives 
$$ \Vert A \cdot m_k \Vert_{\Ltwof} = |m_k| \Vert  A\mathds{1} \Vert_{\Ltwof},$$
and since the left hand side is bounded, the sequence $|m_k|$ is also bounded and therefore, by \eqref{eq:soboTV}, $(u_k)$ is bounded in $\L^2(\Omega)$ and converges weakly (up to a subsequence) to some $u$. The end of the proof works then again as in Prop. \ref{prop:existmin}.
\end{proof}

In the rest of the section, we assume that we are in the case of $\Omega = \R^2$ or Dirichlet boundary conditions, but the results and their proofs are identical for Neumann boundary conditions.

First, we recall some basic results about the convergence of $u_\alpha$ as $\alpha$ vanishes, when some noise is added to the data $f$.
\begin{lemma} \label{lem:uconv}
Let $A: \Ltwou \to \Ltwof$ be a bounded linear operator. 
Moreover, assume that there exists a solution $\tilde{u}$ of \eqref{eq:Auf} which satisfies 
$\TV{\tilde{u}} < \infty$. 
Then the following results hold:
\begin{itemize}
 \item There exists a solution $\udag$ of \eqref{eq:Auf} with minimal total variation. That is $A \udag = f$ and 
       \begin{equation*}\label{eq:min}
        \TV{\udag} = \inf \set{ \TV{u} \, \middle\vert \, u \in \Ltwou \mbox{, } A u = f}\,. 
       \end{equation*}
  \item Given a sequence $(\alpha_n)$ with $\alpha_n \to 0^+$, elements $w_n \in \Ltwou$ and some positive constant $C$ such that 
\begin{equation}\label{eq:paramChoiceConv}\frac{\norm{w_n}_{\Ltwof}^2}{\alpha_n} \ls C,\end{equation}
there exist a (not relabelled) subsequence $(\alpha_n)$ and minimizers $(u_{\alpha_n, w_n})$ of 
$$u \mapsto \frac{1}{2}\norm{Au-(f+w_n)}_\Ltwof^2 + \alpha_n \TV{u}$$
such that $u_{\alpha_n, w_n} \rightharpoonup \udag$ weakly in $\Ltwou$ for $u^\dag$ a solution of \eqref{eq:Auf} with minimal total variation. Additionally, this convergence is also strong with respect to the $\L^1_{\text{loc}}(\R^2)$ and $\L^p(\Omega)$ 
for $1 \ls p <2$, topology, respectively, if $\Omega$ is bounded. Furthermore, $\TV{u_{\alpha_n,w_n}} \to \TV{\udag}$.
\end{itemize}
\end{lemma}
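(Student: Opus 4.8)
The plan is to establish both assertions by the direct method, recycling the two compactness ingredients already used for Proposition~\ref{prop:existmin}: the two-dimensional Sobolev inequality $\norm{u}_\Ltwou \ls C\,\TV{u}$ and the strong $\L^1_{\text{loc}}(\R^2)$ compactness of total-variation-bounded families. For the first item I would minimize $\TV{\cdot}$ over the constraint set $\set{u \in \Ltwou \mid Au = f}$, which is nonempty since $\TV{\tilde u} < \infty$. A minimizing sequence $(v_k)$ with $A v_k = f$ is bounded in $\Ltwou$ by the Sobolev inequality, so a subsequence converges weakly in $\Ltwou$ and strongly in $\L^1_{\text{loc}}(\R^2)$ to some $\udag$; boundedness of $A$ gives $A v_k \rightharpoonup A \udag$, and $A v_k \equiv f$ forces $A \udag = f$, while lower semicontinuity of the total variation yields $\TV{\udag} \ls \liminf_k \TV{v_k}$. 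Hence $\udag$ attains the infimum and is a minimal total variation solution.

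For the second item, existence of a minimizer $u_n := u_{\alpha_n, w_n}$ of the perturbed functional (Proposition~\ref{prop:existmin} applied to the data $f + w_n$) is granted. The central estimate comes from testing the minimality of $u_n$ against the competitor $\udag$ and using $A\udag = f$:
\begin{equation*}
\tfrac{1}{2}\norm{Au_n - (f+w_n)}_\Ltwof^2 + \alpha_n \TV{u_n} \ls \tfrac{1}{2}\norm{w_n}_\Ltwof^2 + \alpha_n \TV{\udag}.
\end{equation*}
Keeping the data term and using $\norm{w_n}_\Ltwof^2 \ls C \alpha_n \to 0$ gives $\norm{Au_n - (f+w_n)}_\Ltwof \to 0$, hence $Au_n \to f$ strongly; keeping instead the total variation term and dividing by $\alpha_n$ gives
\begin{equation*}
\TV{u_n} \ls \TV{\udag} + \tfrac{1}{2}\,\norm{w_n}_\Ltwof^2/\alpha_n,
\end{equation*}
which is uniformly bounded by the parameter choice \eqref{eq:paramChoiceConv}.

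The Sobolev inequality then bounds $(u_n)$ in $\Ltwou$, so along a subsequence $u_n \rightharpoonup u^*$ weakly in $\Ltwou$ and strongly in $\L^1_{\text{loc}}(\R^2)$. Boundedness of $A$ combined with $Au_n \to f$ identifies $A u^* = f$, so $u^*$ solves \eqref{eq:Auf}. Lower semicontinuity together with the displayed total variation bound, reconciled with the minimality of $\udag$ among solutions, should give $\TV{u^*} = \TV{\udag}$, so that $u^*$ is itself a minimal total variation solution (which we relabel $\udag$); squeezing $\liminf$ and $\limsup$ of $\TV{u_n}$ between $\TV{\udag}$ and the right-hand side of the bound then yields $\TV{u_n} \to \TV{\udag}$. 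Finally, for bounded $\Omega$ the strong $\L^1(\Omega)$ convergence (which follows from the $\L^1_{\text{loc}}$ convergence on the compact $\overline\Omega$) together with the uniform $\L^2$ bound upgrades to strong $\L^p(\Omega)$ convergence for $1 \ls p < 2$, via the interpolation estimate $\norm{u_n - u^*}_{\L^p(\Omega)} \ls \norm{u_n - u^*}_{\L^1(\Omega)}^\theta \norm{u_n - u^*}_{\Ltwou}^{1-\theta}$.

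The step I expect to be most delicate is exactly the identification of the weak limit as a \emph{minimal} total variation solution, and the attendant convergence $\TV{u_n} \to \TV{\udag}$: lower semicontinuity only controls $\TV{u^*}$ from below, so the conclusion rests on pushing the correction term $\tfrac{1}{2}\norm{w_n}_\Ltwof^2/\alpha_n$ against $\TV{\udag}$ as $\alpha_n \to 0^+$, which is precisely the quantitative content of the parameter choice \eqref{eq:paramChoiceConv}. A secondary, routine subtlety is that $A$ need not be injective, so minimal total variation solutions may not be unique and the convergence is genuinely along a subsequence, consistent with the statement.
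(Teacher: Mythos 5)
Your first bullet and most of your second are correct, and they reproduce in a self-contained way what the paper simply outsources: the paper's own ``proof'' of this lemma is a citation of \cite[Theorems 3.25 and 3.26]{SchGraGroHalLen09} and \cite[Theorem 5.1]{AcaVog94}, and your direct-method argument (Sobolev inequality as in Proposition \ref{prop:existmin}, testing minimality against $\udag$, BV compactness, identification $Au^*=f$, interpolation for the $\L^p(\Omega)$ claim) is essentially the proof given in those references.

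The genuine gap is exactly at the step you flagged, and it cannot be closed under \eqref{eq:paramChoiceConv} as stated. Writing $u_n := u_{\alpha_n,w_n}$, your squeeze needs $\limsup_n \TV{u_n} \ls \TV{\udag}$, i.e.\ it needs the correction term $\norm{w_n}_\Ltwof^2/(2\alpha_n)$ to \emph{vanish}; but \eqref{eq:paramChoiceConv} only asserts boundedness, so all your estimate yields is
\begin{equation*}
\TV{\udag} \ls \liminf_n \TV{u_n} \ls \limsup_n \TV{u_n} \ls \TV{\udag} + C/2 ,
\end{equation*}
which neither identifies the weak limit as a minimal total variation solution (when $A$ is not injective) nor gives $\TV{u_n}\to\TV{\udag}$. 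Your closing claim that the vanishing of this term ``is precisely the quantitative content of the parameter choice'' is the error: boundedness is not vanishing. Moreover, the failure is not an artifact of the method. Take $A=\Id$, $\Omega=\R^2$, $f=0$ (so $\udag=0$), and for a given sequence $\alpha_n\to 0^+$ let $w_n = 2\sqrt{\alpha_n}\,\sum_{i=1}^{N_n} 1_{B(x_i^n,r_n)}$ with $r_n=2\sqrt{\alpha_n}$, $N_n\approx C/(16\pi\alpha_n)$, and centers mutually separated by more than $\pi r_n N_n$. Then $\norm{w_n}_\Ltwof^2 \approx C\alpha_n$, so \eqref{eq:paramChoiceConv} holds; on the other hand, for such well-separated equal discs one checks (via Lemma \ref{lem:subgrad}, each disc carrying its own calibration) that $\tfrac{2}{r_n}\sum_i 1_{B(x_i^n,r_n)}$ belongs to $\partial\TV{0}$ and pairs to equality with $\sum_i 1_{B(x_i^n,r_n)}$, so the unique minimizer is exactly $u_n = \sqrt{\alpha_n}\sum_i 1_{B(x_i^n,r_n)}$. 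Then $\norm{u_n}_\Ltwou^2 \approx C\alpha_n/4 \to 0$, so $u_n\to\udag$ even strongly, yet $\TV{u_n} = 4\pi N_n\alpha_n \to C/4 > 0 = \TV{\udag}$: the ``Furthermore'' clause is simply false under \eqref{eq:paramChoiceConv}, along every subsequence.

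The repair is the hypothesis actually used in the cited results: replace \eqref{eq:paramChoiceConv} by $\norm{w_n}_\Ltwof^2/\alpha_n \to 0$, under which your squeeze closes verbatim and the weak limit is a minimal total variation solution. This is also all the paper ever needs: in the proof of Theorem \ref{th:lsconv}, the parameter choices \eqref{eq:paramChoiceDirichlet} and \eqref{eq:paramChoiceNeumann} give $\norm{w_n}_\Ltwof^2/\alpha_n \ls (\eta/\norm{A^\ast})^2\,\alpha_n \to 0$, so the stronger hypothesis is satisfied there and the main theorem is unaffected.
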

\begin{proof}
A proof of the first statement can be found in \cite[Theorem 3.25]{SchGraGroHalLen09}. The second relies on the compactness of the embedding of $\BV$, and may be found in \cite[Theorem 3.26]{SchGraGroHalLen09} (see also \cite[Theorem 5.1]{AcaVog94}).
\end{proof}

For the results contained in the rest of this section, we consider the noiseless case, and therefore we denote a generic minimizer of $\Fd$ with the fixed data $f$ by $\ua$. We now introduce the source condition, which is the key assumption for our results.

\begin{definition}\label{de:source}
Let $A^\ast: \Ltwof \to \Ltwou$ be the adjoint of $A$. We say that a minimum norm solution 
$\udag$ satisfies the \emph{source condition} if 
\begin{equation}\label{eq:srccnd}
\range{A^\ast} \cap \partial \TV{\udag} \neq \emptyset.
\end{equation}
Here $\range{A^\ast}$ denotes the range of the operator $A^\ast$ and $\partial \TV{\udag}$ denotes the subgradient 
of $\TV{\cdot}$ at $\udag$ with respect to $\Ltwou$. 
\end{definition} 

\begin{remark}
This source condition, first introduced in \cite{BurOsh04}, is standard in the inverse problems community. It is the natural condition to obtain convergence rates (with respect to the Bregman distance) of $\ua \to u^\dag$. See \cite[Proposition 3.35, Theorem 3.42]{SchGraGroHalLen09}.
\end{remark}

\begin{remark} Let us notice that the set in \eqref{eq:srccnd} does not depend on which minimal variation solution $\udag$ is chosen. Indeed, let $\udag_1, \udag_2$ be such solutions and assume 
\[A^*p \in \partial \TV{\udag_1},\] which means that for every $h \in \Ltwou$
\[\TV{\udag_1+h}-\TV{\udag_1} \gs \inner{A^*p}{h}_\Ltwou.\]
Now we can write for every $k \in \Ltwou$, since $\TV{\udag_2}=\TV{\udag_1}$
\begin{align*}\TV{\udag_2+k}-\TV{\udag_2}&=\TV{\udag_1+(\udag_2-\udag_1)+k}-\TV{\udag_1}\\
&\gs \inner{A^*p}{(\udag_2-\udag_1)+k}_\Ltwou \\
&= \inner{A^*p}{\udag_2-\udag_1}_\Ltwou + \inner{A^*p}{k}_\Ltwou\\
&= \inner{p}{A\udag_2-A\udag_1}_\Ltwof+\inner{A^*p}{k}_\Ltwou\\
&=\inner{A^*p}{k}_\Ltwou,
\end{align*}
which means that $A^*p \in \partial \TV{\udag_2}$.
\end{remark}

\begin{theorem} \label{th:EkeTem}
\begin{itemize}
 \item Let $\alpha > 0$. The dual problem (in the sense of \cite{EkeTem99}) of minimizing the functional $\Fd$, defined in \eqref{eq:primalDirichlet}, on  
 $\Ltwou$ consists in maximizing, among $p \in \Ltwof$ such that $A^\ast p \in \partial \TV{0}$, the quantity
 \begin{equation}
 \label{eq:dual}
 \Da{p} := \inner{f}{p}_\Ltwof - \frac{\alpha}{2} \norm{p}_\Ltwof^2.  
 \end{equation}
 Moreover, 
 \begin{equation}
 \label{eq:duality}
 \inf_{u \in \Ltwou} \Fd(u) = \sup_{A^\ast p \in \partial \TV{0}} \Da{p}. 
 \end{equation}
 If these quantities are attained by $\ua$, $\pa$, then we have the extremality relations
\begin{equation} 
A^\ast \pa \in \partial \TV{\ua}  \label{eq:extcond1} 
\end{equation}
and
\begin{equation*} 
\pa \in -\partial \left( \frac{1}{2\alpha} \norm{A \cdot -f}_\Ltwof^2 \right)(\ua) = \set{\frac 1\alpha (f-A\ua)}. \label{eq:extcond2} \end{equation*}
 \item Similarly, the formal limits of the minimization problems for \eqref{eq:primalDirichlet} and \eqref{eq:dual} when $\alpha \to 0$ write
 \begin{equation}
  \inf \{ \TV{u} \ \mid \ u\in \Ltwou, \, Au = f \}
  \label{eq:primal0}
 \end{equation}
 and
 \begin{equation} \label{eq:dual0}
 \sup_{A^\ast p \in \partial \TV{0}} \scal{p}{f}_\Ltwof = \sup_{v \in \range{A^\ast} \cap \partial \TV{0}} \scal{v}{u^\dag}_\Ltwou .
 \end{equation}

The extremality conditions for \eqref{eq:primal0} and \eqref{eq:dual0}, provided the quantities above are attained by some $u^\dag, p_0$, write
\begin{equation} 
A^\ast p_0 \in \partial \TV{u^\dag}  \label{eq:extcond01} 
\end{equation}
and
\begin{equation*} 
p_0 \in - \left( \partial\chi_{\{f\}} (A \,\cdot ) \right)(u^\dag) = \Ltwof, \label{eq:extcond02} \end{equation*}
\end{itemize}
where $\chi_{\{f\}}$ is the indicator function (defined on $\Ltwof$) of the set $\{f\}$, i.e. $\chi_{\{f\}}(q)=0$ if $q=f$, and $\chi_{\{f\}}(q)=+\infty$ otherwise.
\end{theorem}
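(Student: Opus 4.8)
The statement is an instance of Fenchel--Rockafellar convex duality in the sense of \cite{EkeTem99}, and I would extract all four displayed facts from a single conjugate computation. Scaling is harmless, so I work with $\alpha^{-1}\Fd$, whose minimizers coincide with those of $\Fd$, and split it as $\alpha^{-1}\Fd(u)=F(Au)+G(u)$ with
\begin{equation*}
F(v):=\frac{1}{2\alpha}\norm{v-f}_\Ltwof^2,\qquad G(u):=\TV{u}.
\end{equation*}
The Fenchel dual is then $\sup_{p\in\Ltwof}\set{-F^*(-p)-G^*(A^\ast p)}$, and the plan is to compute the two conjugates, invoke a qualification condition to exclude a duality gap, and read the extremality relation \eqref{eq:extcond1} off the equality cases of the Fenchel--Young inequality.

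The first conjugate is that of a shifted quadratic, $F^*(q)=\inner{q}{f}_\Ltwof+\frac{\alpha}{2}\norm{q}_\Ltwof^2$, so that $-F^*(-p)=\inner{p}{f}_\Ltwof-\frac{\alpha}{2}\norm{p}_\Ltwof^2=\Da{p}$, exactly the objective in \eqref{eq:dual}. For the second, $\TV{\cdot}$ is convex, lower semicontinuous on $\Ltwou$, nonnegative, and positively one-homogeneous with $\TV{0}=0$; the conjugate of such a sublinear functional is the indicator of its subdifferential at the origin, i.e. $G^*(\xi)=0$ when $\xi\in\partial\TV{0}$ and $+\infty$ otherwise. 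This converts the dual into the maximization of $\Da{\cdot}$ over $\set{p\mid A^\ast p\in\partial\TV{0}}$, giving \eqref{eq:dual}, and \eqref{eq:duality} becomes the assertion that there is no gap.

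To secure the no-gap identity together with attainment of $\pa$, I would verify the standard qualification: $F$ is finite and continuous on all of $\Ltwof$, while $G(0)=\TV{0}=0<\infty$, so $F$ is continuous at the point $A\cdot 0$ lying in the range of $A$. Rockafellar's condition then yields equality of the primal and dual values with the dual supremum attained, the primal minimizer $\ua$ being furnished by Proposition \ref{prop:existmin}. At an optimal pair $(\ua,\pa)$ the duality identity forces both Fenchel--Young inequalities to be tight: $-\pa\in\partial F(A\ua)=\set{\alpha^{-1}(A\ua-f)}$ produces the residual relation $\pa=\alpha^{-1}(f-A\ua)$, while $A^\ast\pa\in\partial G(\ua)=\partial\TV{\ua}$ is exactly \eqref{eq:extcond1}.

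For the vanishing-regularization limit I would run the identical argument with the quadratic fidelity replaced by the hard constraint, i.e. with $F=\chi_{\set{f}}$ and $G=\TV{\cdot}$. Now $\chi_{\set{f}}^*(q)=\inner{q}{f}_\Ltwof$ is linear, so the dual objective degenerates to $p\mapsto\inner{p}{f}_\Ltwof$ on $\set{p\mid A^\ast p\in\partial\TV{0}}$; substituting $f=A\udag$ through $\inner{p}{f}_\Ltwof=\inner{A^\ast p}{\udag}_\Ltwou$ and letting $v=A^\ast p$ range over $\range{A^\ast}\cap\partial\TV{0}$ rewrites this as \eqref{eq:dual0}. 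The tightness conditions then read $A^\ast p_0\in\partial\TV{\udag}$, which is \eqref{eq:extcond01}, while the companion relation is vacuous because the subdifferential of $\chi_{\set{f}}$ at the feasible point is all of $\Ltwof$. The main obstacle is located precisely here: the constraint set $\set{u\mid Au=f}$ has empty interior, so the qualification that was automatic for $\alpha>0$ fails and the limiting dual need not be attained. Attainment of a maximizer $p_0$ with $A^\ast p_0\in\partial\TV{\udag}\subset\partial\TV{0}$ is equivalent to $\range{A^\ast}\cap\partial\TV{\udag}\neq\emptyset$, that is, to the source condition \eqref{eq:srccnd}; this is the one point I would treat carefully rather than as a routine verification, and it explains why the source condition, and not any regularity of $f$, is the correct hypothesis.
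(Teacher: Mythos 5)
Your proposal is correct and follows essentially the same route as the paper: Fenchel duality (Theorem \ref{thm:fenchel}) applied to the splitting of $\alpha^{-1}\Fd$ into $\TV{\cdot}$ plus the quadratic fidelity, with the same continuity qualification at $u=0$ and the identification $\mathrm{TV}^* = \chi_{\partial\TV{0}}$, which you obtain directly from positive one-homogeneity where the paper passes through the $\L^2$-closure of the set $\mathcal{K}$ of constrained divergences. Your explicit handling of the $\alpha \to 0$ bullet --- the degenerate conjugate $\chi_{\{f\}}^*$, the failure of the qualification condition for the hard constraint, and the resulting link between dual attainment and the source condition --- is if anything more detailed than the paper's own proof, which treats that part as formal and defers the attainment question to Lemma \ref{lem:sourceiffdual}.
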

\begin{proof} In the $\L^2$ setting we can make use of classical Fenchel duality, applying Theorem \ref{thm:fenchel} in the appendix with the choices
\begin{equation*} \label{eq:EkeTem}
 X=\Ltwou, Y=\Ltwof, A = A, F(\cdot) = \TV{\cdot} \mbox{ and } G(\cdot) = \frac{1}{2\alpha} \norm{\cdot -f}_\Ltwof^2.
\end{equation*}
We now check the assumptions of this theorem: $F$ and $G$ are convex and lower semi-continuous in $\Ltwou$. In addition, there exists $v \in \Ltwou$, for instance $v = 0$, such that $\TV{v} < + \infty$, $\frac{1}{2\alpha} \norm{Av -f}_\Ltwof^2 < +\infty$ and $w \mapsto \frac{1}{2\alpha} \norm{w -f}_\Ltwof^2 < +\infty$ is continuous at $Av$.

Now, noticing that since by definition $\mathrm{TV}$ is the conjugate of the indicator function of the set 
$$\mathcal K = \left\{\div z \, \middle\vert\, z \in C^\infty_0(\Omega\, ;\,  \R^2),\|z\|_{\L^\infty(\Omega)} \ls 1\right\},$$
we get that its conjugate $\mathrm{TV}^*$ is the indicator function of the closure $\overline{\mathcal K}$ of $\mathcal K$ in the $\L^2$ topology \cite[Propositions I.4.1 and I.3.3]{EkeTem99}. On the other hand, we have \cite[Proposition I.5.1]{EkeTem99} that $v \in \partial\TV{0}$ if and only if $\mathrm{TV}^*(v)=0$, that is, when $v \in \overline{\mathcal K}$.
\end{proof}

The assumption that there exists a maximizer of \eqref{eq:dual0} is in fact related to 
the source condition \eqref{eq:srccnd}:

\begin{lemma}\label{lem:sourceiffdual}
The following identity holds:
\begin{equation} 
       \partial\TV{\udag} = \set {v \in \partial \TV{0} \ \middle \vert \ \inner{v}{\udag}_\Ltwou = \TV{\udag}}.\label{eq:gtv} 
\end{equation}
Furthermore, there exists $p_0$ maximizing the functional defined in \eqref{eq:dual0} over $p \in \Ltwou$ satisfying $A^\ast p \in \partial \TV{0}$ if and only if the source condition \eqref{eq:srccnd} is satisfied. 
\end{lemma}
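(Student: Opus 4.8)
The plan is to first establish the pointwise identity \eqref{eq:gtv}, which is a general fact about the subgradient of a convex, positively one-homogeneous, lower semicontinuous functional, and then to deduce the existence statement from it together with a strong duality identity obtained by a limiting argument as $\alpha \to 0$.

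For \eqref{eq:gtv} I would argue directly from the definition of the subgradient, using only that $\TV{\cdot}$ is convex, that $\TV{0}=0$, and that it is positively one-homogeneous, i.e. $\TV{t\udag}=t\TV{\udag}$ for $t \gs 0$. For the inclusion ``$\subseteq$'', given $v \in \partial\TV{\udag}$ (so in particular $\TV{\udag}<\infty$), I would test the subgradient inequality at $h=-\udag$ and at $h=(t-1)\udag$ for $t>1$ and $t<1$; one-homogeneity makes both sides linear in $t$ and forces $\inner{v}{\udag}_\Ltwou=\TV{\udag}$. Feeding this equality back into the inequality, rewritten with $h$ replaced by $h-\udag$, yields $\TV{h} \gs \inner{v}{h}_\Ltwou$ for all $h$, i.e. $v \in \partial\TV{0}$. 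For ``$\supseteq$'', if $v \in \partial\TV{0}$ and $\inner{v}{\udag}_\Ltwou=\TV{\udag}$, then $\TV{\udag+h} \gs \inner{v}{\udag+h}_\Ltwou = \TV{\udag}+\inner{v}{h}_\Ltwou$ gives the subgradient inequality at $\udag$ at once.

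With \eqref{eq:gtv} in hand I would reduce the existence statement to a strong duality identity. For any feasible $p \in \Ltwof$, i.e. with $A^\ast p \in \partial\TV{0}$, writing $v=A^\ast p$ and using $A\udag=f$ gives $\inner{p}{f}_\Ltwof = \inner{v}{\udag}_\Ltwou$, and since $v \in \partial\TV{0}$ this is $\ls \TV{\udag}$, with equality precisely when $v \in \partial\TV{\udag}$ by \eqref{eq:gtv}. Thus the source condition \eqref{eq:srccnd} says exactly that the value $\TV{\udag}$ is attained by some feasible $p_0$. The forward implication is then immediate: a source element $A^\ast p_0 \in \partial\TV{\udag}$ attains the upper bound $\TV{\udag}$ and is therefore a maximizer of \eqref{eq:dual0}. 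For the converse I need that the supremum in \eqref{eq:dual0} actually equals $\TV{\udag}$; granting this, any maximizer $p_0$ satisfies $\inner{A^\ast p_0}{\udag}_\Ltwou = \TV{\udag}$, so $A^\ast p_0 \in \range{A^\ast} \cap \partial\TV{\udag}$ by \eqref{eq:gtv} and \eqref{eq:srccnd} holds.

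The crux is therefore the strong duality identity $\sup\set{\inner{p}{f}_\Ltwof : A^\ast p \in \partial\TV{0}} = \TV{\udag}$. The inequality ``$\ls$'' is weak duality, immediate from testing $A^\ast p \in \partial\TV{0}$ against any solution of $Au=f$. For ``$\gs$'' the naive route through Fenchel duality fails, because the limiting primal \eqref{eq:primal0} corresponds to $G=\chi_{\{f\}}$, which is nowhere continuous, so no constraint qualification is available; this is the main obstacle. Instead I would approximate the dual optimum by the $\alpha>0$ certificates. Taking the noiseless minimizers $\ua$ from Lemma \ref{lem:uconv} (with $w_n=0$ in \eqref{eq:paramChoiceConv}) along a sequence $\alpha \to 0^+$ with $\TV{\ua} \to \TV{\udag}$, the extremality relation \eqref{eq:extcond1} gives $A^\ast\pa \in \partial\TV{\ua} \subseteq \partial\TV{0}$ for $\pa=\tfrac1\alpha(f-A\ua)$, so each $\pa$ is feasible. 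Using \eqref{eq:gtv} for $\ua$ to write $\inner{A^\ast\pa}{\ua}_\Ltwou=\TV{\ua}$, a short computation gives $\inner{\pa}{f}_\Ltwof = \TV{\ua}+\tfrac1\alpha\norm{f-A\ua}_\Ltwof^2$, which lies between $\TV{\ua}$ and the weak-duality bound $\TV{\udag}$; since $\TV{\ua} \to \TV{\udag}$ this forces $\inner{\pa}{f}_\Ltwof \to \TV{\udag}$, establishing ``$\gs$'' and hence both the strong duality identity and the converse.
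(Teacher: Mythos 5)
Your proposal is correct, and while the identity \eqref{eq:gtv} and the forward implication follow the paper's path, your converse takes a genuinely different route. For \eqref{eq:gtv} you re-derive, by testing at $h=-\udag$ and $h=(t-1)\udag$, exactly the general homogeneity fact that the paper delegates to Lemma \ref{lem:subgrad} in the appendix (proved there via the triangle inequality $F(y+z)\ls F(y)+F(z)$); the content is the same. The forward implication also coincides with the paper's: a source element attains the weak-duality bound $\TV{\udag}$, hence maximizes \eqref{eq:dual0}. The divergence is in the converse. The paper disposes of it in one line by invoking the extremality condition \eqref{eq:extcond01} attached to the limiting primal-dual pair \eqref{eq:primal0}--\eqref{eq:dual0} in Theorem \ref{th:EkeTem}; but those limit relations are only \emph{formally} stated there, since extremality conditions in Fenchel duality require a zero duality gap, and, as you correctly observe, the constraint qualification of Theorem \ref{thm:fenchel} fails at $\alpha=0$ because $\chi_{\{f\}}$ is nowhere continuous. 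You instead prove the needed strong duality identity $\sup\set{\inner{p}{f}_\Ltwof \,:\, A^\ast p \in \partial\TV{0}} = \TV{\udag}$ by approximation: for $\alpha>0$ the constraint qualification does hold, so the extremality relations \eqref{eq:extcond1} and $\pa=\tfrac1\alpha(f-A\ua)$ are rigorous, yielding feasible certificates with $\inner{\pa}{f}_\Ltwof = \TV{\ua}+\tfrac1\alpha\norm{f-A\ua}_\Ltwof^2$ squeezed between $\TV{\ua}$ and $\TV{\udag}$, and $\TV{\ua}\to\TV{\udag}$ by Lemma \ref{lem:uconv} closes the gap. What each approach buys: the paper's converse is shorter but rests on a step justified only formally within the paper, whereas yours is longer (it imports Lemma \ref{lem:uconv} and the $\alpha>0$ duality machinery) but is self-contained and additionally establishes, as a byproduct, that the optimal value of \eqref{eq:dual0} equals the minimal total variation $\TV{\udag}$ whether or not it is attained.
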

\begin{proof}
The identity \eqref{eq:gtv} follows directly from Lemma \ref{lem:subgrad} proved in the appendix. For the second part, we start by noticing that
\begin{equation} \label{eq:h1}
 \inner{p}{f}_\Ltwof = \scal{p}{A\udag}_\Ltwof = \inner{A^\ast p}{\udag}_\Ltwou.
\end{equation}
The source condition \eqref{eq:srccnd} implies the existence of $p_0 \in \Ltwof, v_0 \in \Ltwou$ such 
       that 
       \begin{equation*}
       v_0 = A^\ast p_0 \in \partial \TV{\udag}.
       \end{equation*}
       Then, we note that for an arbitrary $v \in \partial \TV{0}$ the definition of the subgradient implies that 
       $\TV{\udag} - \TV{0} - \inner{v}{\udag-0}_\Ltwou \geq 0$, and thus
       \begin{equation} \label{eq:he1}
       \inner{v}{\udag}_\Ltwou \ls \TV{u^\dag}.
       \end{equation}
In particular, \eqref{eq:gtv} and \eqref{eq:he1} imply that $v_0 \in \partial \TV{0}$ and maximizes $\partial \TV{0} \ni v \mapsto \inner{v}{\udag}_\Ltwou$. Therefore, using \eqref{eq:h1}, it follows that $p_0 \in\Ltwof$ is a maximizer of the functional defined in \eqref{eq:dual0}.

Conversely, if $p_0 \in \Ltwof$ maximizes $\scal{\cdot}{f}_\Ltwof$ among $p$ such that $A^\ast p \in \partial \TV{0}$, then the extremality condition \eqref{eq:extcond01} ensures that
$$ A^\ast p_0 \in \partial \TV{\udag},$$
and thus the source condition is satisfied.
\end{proof}

\begin{remark}
The minimizers of the primal functionals \eqref{eq:primalDirichlet}, \eqref{eq:primal0} as well as the maximizers of the limit dual 
functional \eqref{eq:dual0} are not unique in general. However, the dual functional $\mathcal D_\alpha$, defined in \eqref{eq:dual}, 
has a unique maximizer $p_\alpha$. Indeed, the existence follows directly since $\partial\TV{0}$ is weakly closed (subgradients of lower semicontinuous convex functions 
are convex and strongly closed \cite[Proposition 16.4]{BauCom17}, hence weakly closed) and non empty (zero is for example in it).
Uniqueness follows by the strict convexity of the squared $\mathrm{L}^2$ norm and convexity of $\partial\TV{0}$.
\end{remark}

The following proposition is a key result explaining the importance of the source condition, and its influence on the behavior of the dual solutions. The arguments are similar as proving convergence of the Augmented Lagrangian Method (see \cite{Glo84}), which have also been used to prove convergence rates results for dual variables of Tikhonov regularized solution \cite{FriSch10} and to prove existence of Bregman TV-flow
\cite{BurFriOshSch07}. The proof of the first part of this proposition follows \cite{DuvPey15}.
\begin{prop}
 Let the source condition \eqref{eq:srccnd} be satisfied and let $\pa$ be the maximizer of \eqref{eq:dual}. Then,
 $$ \lim_{\alpha \to 0^+} \pa = p_m \quad \mbox{strongly in }\Ltwof,$$
 where $p_m$ is the maximizer of $\eqref{eq:dual0}$ with minimal $\Ltwof$ norm. 
 Conversely, if $(\pa)$ is bounded in $\Ltwof$, then the source condition is satisfied (and therefore $(\pa)$ is also convergent).
 \label{prop:conv}
\end{prop}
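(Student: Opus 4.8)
The plan is to exploit the quadratic structure of the dual functional together with the geometry of the feasible set $K := \set{p \in \Ltwof \mid A^\ast p \in \partial\TV{0}}$, which is convex and, since $\partial\TV{0}$ is weakly closed and $A^\ast$ is weak-to-weak continuous, weakly closed (it is the preimage of $\partial\TV{0}$ under $A^\ast$). For the forward implication I would first invoke Lemma \ref{lem:sourceiffdual}: the source condition guarantees that the limit functional \eqref{eq:dual0} attains its supremum $M := \sup_{q \in K}\inner{f}{q}_\Ltwof$, and the set of maximizers, being $K$ intersected with the hyperplane $\set{\inner{f}{\cdot}_\Ltwof = M}$, is closed and convex and hence contains a unique minimal-norm element $p_m$.

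The crux is an a priori bound coming from a two-sided comparison of dual values. Since $p_m \in K$ is feasible for \eqref{eq:dual} while $\pa$ is its maximizer, and since every $p \in K$ obeys $\inner{f}{p}_\Ltwof \ls M$, I would chain
\[ M - \frac{\alpha}{2}\norm{p_m}_\Ltwof^2 = \Da{p_m} \ls \Da{\pa} = \inner{f}{\pa}_\Ltwof - \frac{\alpha}{2}\norm{\pa}_\Ltwof^2 \ls M - \frac{\alpha}{2}\norm{\pa}_\Ltwof^2. \]
The outer inequality gives $\norm{\pa}_\Ltwof \ls \norm{p_m}_\Ltwof$ for every $\alpha$, so $(\pa)$ is bounded; the leftmost inequality, rewritten as $\inner{f}{\pa}_\Ltwof \gs M - \tfrac{\alpha}{2}\norm{p_m}_\Ltwof^2$, will be used to identify the limit. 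Along any weakly convergent subsequence $\pa \rightharpoonup \bar p$, weak closedness of $K$ gives $\bar p \in K$, while passing to the limit in the rewritten inequality yields $\inner{f}{\bar p}_\Ltwof \gs M$; hence $\bar p$ maximizes \eqref{eq:dual0}. Weak lower semicontinuity of the norm and the uniform bound force $\norm{\bar p}_\Ltwof \ls \norm{p_m}_\Ltwof$, and minimality of $p_m$ among maximizers then identifies $\bar p = p_m$, so in fact the whole family converges weakly to $p_m$.

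To upgrade this to strong convergence I would use the squeeze $\norm{p_m}_\Ltwof \ls \liminf_{\alpha\to0}\norm{\pa}_\Ltwof \ls \limsup_{\alpha\to0}\norm{\pa}_\Ltwof \ls \norm{p_m}_\Ltwof$, where the first inequality is weak lower semicontinuity applied to $\pa \rightharpoonup p_m$ and the last is the a priori bound; this gives $\norm{\pa}_\Ltwof \to \norm{p_m}_\Ltwof$, and combined with $\pa \rightharpoonup p_m$ in the Hilbert space $\Ltwof$, norm convergence plus weak convergence yields $\pa \to p_m$ strongly. For the converse, assuming $(\pa)$ bounded, I would extract a weak limit $\bar p \in K$ and, for arbitrary $q \in K$, invoke optimality $\Da{\pa} \gs \Da{q}$ to obtain $\inner{f}{\pa}_\Ltwof \gs \inner{f}{q}_\Ltwof - \tfrac{\alpha}{2}\norm{q}_\Ltwof^2$; letting $\alpha \to 0$ along the subsequence shows $\inner{f}{\bar p}_\Ltwof \gs \inner{f}{q}_\Ltwof$ for every $q \in K$, so $\bar p$ maximizes \eqref{eq:dual0} and Lemma \ref{lem:sourceiffdual} returns the source condition \eqref{eq:srccnd} (whence the forward part gives convergence).

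The main obstacle I anticipate is not any individual estimate but getting the topological prerequisites exactly right: verifying that $K$ is genuinely weakly closed so that weak subsequential limits remain feasible, that the minimal-norm maximizer $p_m$ is well defined and unique, and that the passage from weak plus norm convergence to strong convergence is correctly justified in $\Ltwof$. Once these are in place, everything reduces to the elementary comparison of the quadratic dual values displayed above.
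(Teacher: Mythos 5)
Your proposal is correct, and its forward half is essentially the paper's own argument: both proofs compare dual values of $\pa$ against a fixed maximizer of \eqref{eq:dual0} to obtain the uniform bound on $\norm{\pa}_\Ltwof$, then show every weak subsequential limit is a maximizer of \eqref{eq:dual0} with norm at most that of the comparison point, identify it with the minimal-norm maximizer, and upgrade weak to strong convergence via convergence of norms. (Your variant of testing directly against $p_m$, rather than against an arbitrary maximizer $p_0$ as the paper does, is a mild streamlining that makes the $\limsup$ bound explicit.) The genuine difference is in the converse. You stay entirely on the dual side: optimality $\Da{\pa} \gs \Da{q}$ for arbitrary $q$ with $A^\ast q \in \partial\TV{0}$ shows that any weak limit $\bar p$ maximizes \eqref{eq:dual0}, and you then invoke the ``maximizer exists $\Rightarrow$ source condition'' direction of Lemma \ref{lem:sourceiffdual}. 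The paper instead detours through the primal: by Lemma \ref{lem:uconv} it extracts $u_{\alpha_n} \to \udag$ in $\L^1_{\text{loc}}(\R^2)$, uses the $\alpha>0$ extremality relation \eqref{eq:extcond1} together with \eqref{eq:gtv} to write $\TV{u_{\alpha_n}} = \inner{A^\ast p_{\alpha_n}}{u_{\alpha_n}}_\Ltwou$, and passes to the limit (lower semicontinuity of $\TV{\cdot}$, strong convergence $Au_{\alpha_n} \to f$, weak closedness of $\partial\TV{0}$) to conclude $A^\ast \bar p \in \partial\TV{\udag}$ directly. Your route is shorter, but be aware of what it leans on: the direction of Lemma \ref{lem:sourceiffdual} you cite is proved in the paper via the extremality condition \eqref{eq:extcond01} for the limit problems, and Theorem \ref{th:EkeTem} presents that duality only as a formal limit --- the constraint qualification of Theorem \ref{thm:fenchel} fails there, since the indicator $\chi_{\{f\}}$ is nowhere continuous. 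The paper's primal detour uses only the rigorously justified relation \eqref{eq:extcond1}, so it is self-contained and can in fact be read as an independent, rigorous proof of the very direction of Lemma \ref{lem:sourceiffdual} that you cite. In short: your proof is valid given the paper's stated lemmas, with a cleaner but less self-supporting converse; the paper's converse is longer but avoids resting on the formal limit duality.
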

\begin{proof}
Let $p_0$ be a maximizer of \eqref{eq:dual0}, which exists by Lemma \ref{lem:sourceiffdual}. We have that
$$ \inner{p_0}{f}_\Ltwof = \inner{A^\ast p_0}{\udag}_\Ltwou \gs \inner{A^\ast \pa}{\udag}_\Ltwou$$
and analogously, since $\pa$ maximizes $\Da{\cdot}$ that
\begin{equation}\label{eq:comp2}
\inner{A^\ast \pa}{\udag}_\Ltwou - \frac{\alpha}{2} \norm{\pa}_\Ltwof^2 \gs \inner{A^\ast p_0}{\udag}_\Ltwou - \frac{\alpha}{2} \norm{p_0}_\Ltwof^2. 
\end{equation}
Summing these inequalities, we see that $(\pa)$ is bounded and therefore converges weakly (up to a subsequence) to some $p_m \in \Ltwof$. Passing to the limit in the two previous equations gives
$$\inner{p_m}{f}_\Ltwof = \inner{p_0}{f}_\Ltwof$$
where $A^\ast p_m \in \partial \TV{0}$, the latter being weakly closed. Equation \eqref{eq:comp2} and weak convergence imply that 
$$ \norm{p_m}_\Ltwof \ls   \liminf \norm{\pa}_\Ltwof \ls \norm{p_0}_\Ltwof$$ 
which implies that $p_m$ is actually the minimal norm maximizer of the functional $\inner{\cdot}{f}_\Ltwof$ over $p$ such that $A^\ast p \in \partial \TV{0}$, and that the convergence is strong (and for every subsequence).

Let us now assume that $(\pa)$ is bounded in $\Ltwof$. Then by weak compactness for the $p_\alpha$ and applying Lemma \ref{lem:uconv} (with $w_n = 0$), there exist $\alpha_n \to 0$, $u^\dag$ a solution of $Au = f$ with minimal total variation, and $\overline p$ such that 
\begin{align*}p_{\alpha_n} &\rightharpoonup \overline p\text{ in }\Ltwof, \text{ and }\\
u_{\alpha_n} &\to u^\dag\text{ in }\L^1_{\text{loc}}(\Omega).
\end{align*}
The extremality conditions \eqref{eq:extcond1} and \eqref{eq:gtv} imply that 
$$ \inner{A^\ast p_{\alpha_n}}{u_{\alpha_n}}_\Ltwou = \TV{u_{\alpha_n}}.$$ 
Since $\TV{\cdot}$ is lower semi-continuous on $\L^1_{\text{loc}}(\R^2)$, we have 
$\TV{\udag} \ls \liminf_n \TV{u_{\alpha_n}}.$
On the other hand, one can write
\begin{align*}
\TV{u_{\alpha_n}} &= \inner{A^\ast p_{\alpha_n}}{\udag}_\Ltwou + \inner{A^\ast p_{\alpha_n}}{u_{\alpha_n} - \udag}_\Ltwou \\
&= \inner{A^\ast p_{\alpha_n}}{\udag}_\Ltwou + \inner{p_{\alpha_n}}{Au_{\alpha_n} - f}_\Ltwof,\end{align*}
where the first term of the right hand side converges to $\inner{A^\ast \overline p}{\udag}_\Ltwou$ 
whereas the second term goes to zero because $(p_{\alpha_n})$ is uniformly bounded in $\Ltwou$ and 
because of the strong $\Ltwof$ convergence $Au_{\alpha_n} \to f$. Moreover, $A^\ast \overline p \in \partial \TV{0}$ because 
$\partial \TV{0}$ is weakly closed. Hence $A^\ast \overline p \in \partial \TV{\udag}$, which is the source condition.
\end{proof}

\section{Convergence of Level-Sets}
\label{sec:cvls}
In order to formulate the main result of this paper we need three definitions:

\begin{definition}
 \label{def:hausdist}
 Let $E$ and $F$ two subsets of $\Omega$. We define the Hausdorff distance between $E$ and $F$ to be the quantity
 \begin{equation*}\begin{aligned}\label{eq:hausdist}
d_H(E, F)&=\max\set{\sup_{x \in E} d(x, F),\, \sup_{y \in F} d(y,E)} \\
&= \max\set{\sup_{x \in E}\, \inf_{y \in F} |x - y|,\, \sup_{y \in F}\, \inf_{x \in E} |x - y|}.\end{aligned}\end{equation*}
If $E_n$ is a sequence of subsets of $\Omega$, we say that $E_n$ Hausdorff converges to $F$ whenever $d_H(E_n, F) \to 0.$
\end{definition}

\begin{definition}
Let $u_{\alpha,w}$ denote the minimizer of $\mathcal F_{\alpha}$, $\hat{\mathcal F}_{\alpha}$, respectively, with 
the data $f+w$, where $w$ can be considered as some error, as already considered in Lemma \ref{lem:uconv}. 
For every $t \in \R$, we denote by $ U_{\alpha,w}^{(t)}$ the $t$ level-set of $u_{\alpha,w}$, that is
\begin{align*}
 U_{\alpha,w}^{(t)} &:= \{x \in \Omega \ \mid \ u_{\alpha,w}(x) \gs t\} & \text {for } t\gs 0, \\
 U_{\alpha,w}^{(t)} &:= \{x \in \Omega \ \mid \ u_{\alpha,w}(x) \ls t\} & \text {for } t< 0\;.
\end{align*}
This choice of the level-sets ensures that the volumes of the level-sets are always finite 
(except the zero one that should be considered separately, see \cite{ChaDuvPeyPoo17}).
Moreover, we call $U_\dag^{(t)}$ the level-sets of $u^\dag$.
\end{definition}

\begin{definition} Let $v \in \L^1 (\Omega)$. Then a set $E$ is said to have 
variational curvature $v$ in $\Omega$ if
\begin{itemize}
\item The perimeter in $\Omega$ of $E$ is finite.
\item Let $\mathcal{E}:=\set{ F \subseteq \Omega: F \Delta E \text{ is compactly supported in }\Omega }$. $E$ minimizes the functional 
\begin{equation*}\label{eq:curvfunc}
\mathcal{E} \ni F  \mapsto Per(F)-\int_F v.
\end{equation*}
That means that for every $F \in \mathcal{E}$ 
\begin{equation}\label{eq:mincpct}
\per(E) - \int_E v \ls \per(F) - \int_F v. 
\end{equation}
\end{itemize}
\end{definition}

\begin{remark}
For smooth sets the notion of variational curvature is strongly related to the differential notion of curvature. Indeed, assuming that the boundary of $E$ is smooth and that its variational curvature $v$ is also smooth, one may consider diffeomorphic deformations $\phi^s: \Omega \to \Omega$ applied to $E$ such that each boundary point $x \in \partial E$ is mapped to $x + s h(x) \nu(x)$, where $\nu$ is the outer unit normal vector to $E$
and $h : \Omega \to \R$ is a smooth function. We obtain at $s=0$ \cite[Section 17.3]{Mag12}
\[\frac{d}{ds} \per\left(\phi^s \big(E\big)\right) = \int_{\partial E} h(x)\kappa(x) \,d\mathcal{H}^1(x) \text{ and}\]
\[\frac{d}{ds} \int_{\phi^s \big(E\big)} v = \int_{\partial E} h(x)v(x) \,d\mathcal{H}^1(x),\]
where $\kappa$ is the curvature of $\partial E$ and $\mathcal H^1$ is the $1$-dimensional Hausdorff measure. Since $h$ was arbitrary and using the minimality \eqref{eq:mincpct} of $E$, we must have $\restr{\kappa}{\partial E} =  v$.

In \cite{BarGonTam87}, the authors show that every set with finite perimeter has a variational curvature in $\L^1(\R^2)$, so such a quantity will exist for every set considered in this paper.
\end{remark}

\begin{figure}
\centering
 \includegraphics[width = 0.7\textwidth]{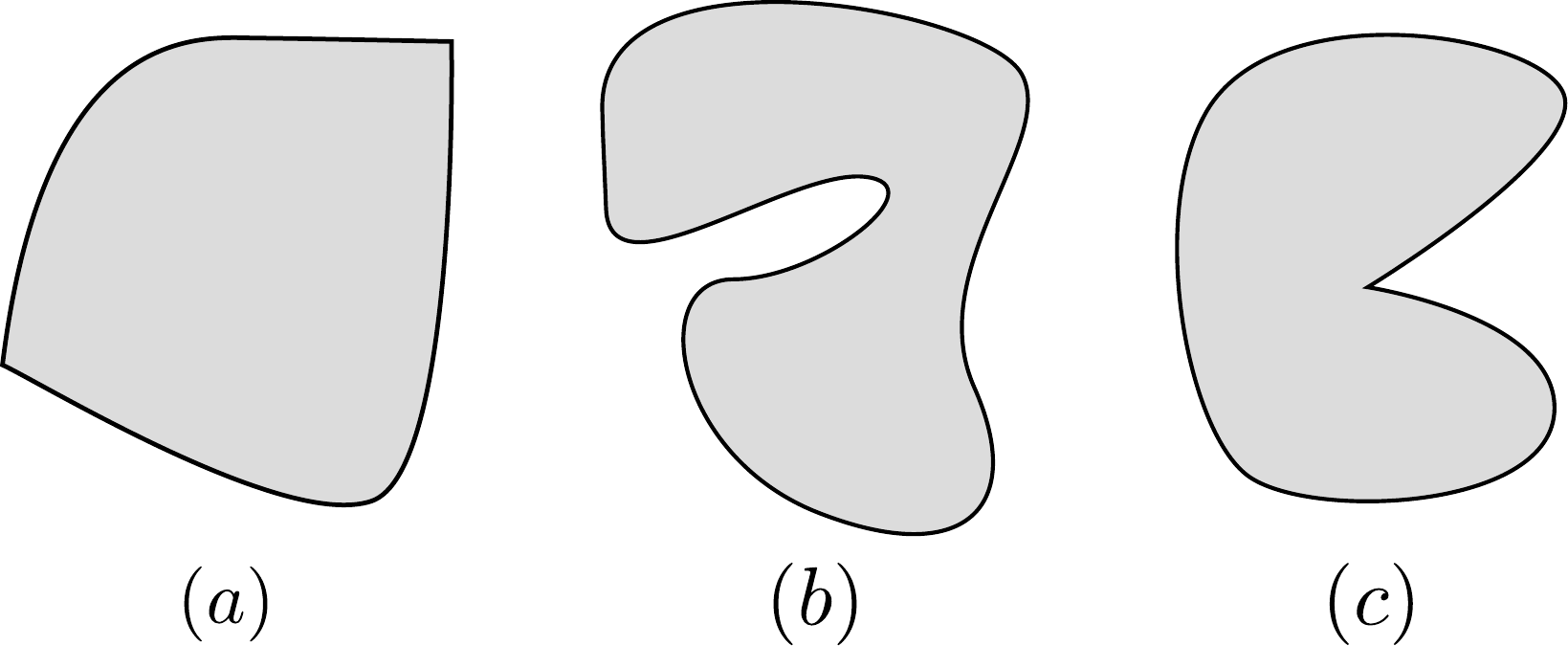}
 \caption{Three Lipschitz domains. Domains $(a)$ and $(b)$ have a variational curvature with lower bound in $\L^2$ of their complements, whereas $(c)$, because of the inside corner, does not.}
 \label{fig:domains}
\end{figure}

Using this definition of a variational curvature we can formulate our main result:
\begin{theorem}\label{th:lsconv}
Assume that either:
\begin{itemize}
\item For Dirichlet boundary conditions or $\Omega = \R^2$, let $(w_n) \subset \Ltwof$ and $\alpha_n \to 0^+$ such that
\begin{equation}\label{eq:paramChoiceDirichlet}
\frac{\| w_n\|_{\Ltwof} \Vert A^\ast \Vert}{\alpha_n} \ls \eta < 2 \sqrt{\pi}.
\end{equation}
If $\Omega$ is bounded, assume further that it admits a variational curvature $\kappa_\Omega \in \L^1(\R^2)$ such that 
$\kappa_\Omega \gs g$ for some $g \in \L^2(\R^2 \setminus \Omega)$. 
\item For Neumann boundary conditions, let $(w_n) \subset \Ltwof$ and $\alpha_n \to 0^+$ such that
\begin{equation}\label{eq:paramChoiceNeumann}
\frac{\| w_n\|_{\Ltwof} \Vert A^\ast \Vert}{\alpha_n} \ls \eta < \frac{1}{C(\Omega)},\end{equation}
with $C(\Omega)$ is some Sobolev-Poincar\'e constant, to be specified later.
\end{itemize}
Let $u_n:=u_{\alpha_n,w_n}$ denote a minimizer of the functional $\mathcal F_{\alpha_n}$, $\hat{\mathcal F}_{\alpha_n} $, respectively, with data $f+w_n$ 
and let $U_n:=U_{\alpha_n,w_n}$ denote the level sets of $u_n$. 
Then up to a subsequence and for almost all $t \in \R$ we have that
\begin{equation}\lim_{n \to \infty} |U_n^{(t)} \Delta U_\dag^{(t)}| = 0, \label{eq:mainth1} \end{equation}
 \begin{equation} \quad \mbox{and} \quad \lim_{n\to \infty} \partial U_n^{(t)} = \partial U_\dag^{(t)}, \label{eq:mainth2} \end{equation}
where the second limit is understood in the sense of Hausdorff convergence.
\end{theorem}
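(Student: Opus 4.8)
The plan is to follow the strategy of \cite{ChaDuvPeyPoo17}: first deduce the convergence in measure \eqref{eq:mainth1} from the $\L^1$ convergence $u_n \to \udag$, and then upgrade it to the Hausdorff convergence of boundaries \eqref{eq:mainth2} by means of uniform density estimates for the level sets. The bridge between the primal and the geometric picture is the observation, to be made precise via the coarea formula together with Lemma~\ref{lem:subgrad}, that the extremality relation (the analogue of \eqref{eq:extcond1} for the data $f+w_n$) forces for a.e.\ $t$ the level set $U_n^{(t)}$ to minimize the variational-curvature functional \eqref{eq:mincpct} with $v = A^\ast p_{\alpha_n,w_n}$ (up to sign according to $\mathrm{sgn}\,t$); likewise \eqref{eq:extcond01} and \eqref{eq:gtv} give that $U_\dag^{(t)}$ has variational curvature $v_m = A^\ast p_m$.

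First I would record the convergence of the dual variables. The parameter choices \eqref{eq:paramChoiceDirichlet}, \eqref{eq:paramChoiceNeumann} imply $\norm{w_n}_\Ltwof \ls (\eta/\norm{A^\ast})\alpha_n \to 0$ and hence the hypothesis \eqref{eq:paramChoiceConv} of Lemma~\ref{lem:uconv}, so that along a subsequence $u_n \to \udag$ strongly in $\L^1_{\mathrm{loc}}(\R^2)$ (and in $\L^p(\Omega)$, $1\ls p<2$, on bounded $\Omega$). On the dual side, Proposition~\ref{prop:conv} gives $p_{\alpha_n} \to p_m$ strongly in $\Ltwof$ for the noiseless maximizers; comparing the noisy and noiseless dual functionals, which differ only by the linear term $\inner{w_n}{\cdot}_\Ltwof$, and using the $\alpha_n$-strong concavity yields $\norm{p_{\alpha_n,w_n}-p_{\alpha_n}}_\Ltwof \ls \norm{w_n}_\Ltwof/\alpha_n$. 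Consequently $v_n := A^\ast p_{\alpha_n,w_n} = v_m + e_n$, where $v_m$ is fixed and $\norm{e_n}_\Ltwou \ls \norm{A^\ast}\,\norm{w_n}_\Ltwof/\alpha_n + o(1) \ls \eta + o(1)$. The point of the stronger parameter choice, compared to \eqref{eq:paramChoiceConv}, is precisely this control: the noise produces a perturbation of the curvature that need not vanish but whose $\L^2$ norm stays below $\eta$.

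The convergence \eqref{eq:mainth1} is then the soft part. Writing the difference through its level sets, the layer-cake/Fubini identity gives $\int_\R \abs{U_n^{(t)}\,\Delta\,U_\dag^{(t)}}\dd t = \norm{u_n-\udag}_{\L^1}$ (with the analogous localized statement on bounded $\Omega$), and the right-hand side tends to zero; passing to a further subsequence we obtain $\abs{U_n^{(t)}\,\Delta\,U_\dag^{(t)}}\to 0$ for a.e.\ $t$, which is \eqref{eq:mainth1}.

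The crux, and the step I expect to be the main obstacle, is \eqref{eq:mainth2}, carried by a pair of uniform density estimates (the content of Section~\ref{sec:density}): there are $c>0$ and $r_0>0$, independent of $n$ and of $t$ in a compact range, so that for every $x \in \partial U_n^{(t)}$ and $r<r_0$ one has $\abs{U_n^{(t)}\cap \Br}\gs c\,r^2$ and $\abs{\Br \setminus U_n^{(t)}}\gs c\,r^2$. To prove these I would test the minimality \eqref{eq:mincpct} against $U_n^{(t)}\setminus \Br$ and $U_n^{(t)}\cup \Br$; setting $m(r):=\abs{U_n^{(t)}\cap \Br}$, decomposing the perimeters across $\partial \Br$, invoking the isoperimetric inequality $\per(E\cap \Br)\gs 2\sqrt{\pi}\,\abs{E\cap \Br}^{1/2}$, and estimating $\int_{U_n^{(t)}\cap \Br} v_n \ls \big(\norm{v_m}_{\L^2(\Br)}+\norm{e_n}_\Ltwou\big)\,m(r)^{1/2}$ by Cauchy--Schwarz. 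Since $\norm{v_m}_{\L^2(\Br)}\to 0$ as $r\to 0$ by absolute continuity of the integral, for $r<r_0$ this is at most $(\eta+o(1))\,m(r)^{1/2}$, and one is led to a differential inequality of the form $(2\sqrt{\pi}-\eta)\,m(r)^{1/2}\ls C\,m'(r)$: the threshold $\eta<2\sqrt{\pi}$ in \eqref{eq:paramChoiceDirichlet} is exactly what keeps the left-hand coefficient positive, forcing $m(r)\gs c\,r^2$ whenever $m(r)>0$, uniformly in $n$. With the density estimates in hand, Hausdorff convergence follows from \eqref{eq:mainth1} by a contradiction argument as in \cite{ChaDuvPeyPoo17}: if some $x_n\in\partial U_n^{(t)}$ stayed at distance $\gs\delta$ from $\partial U_\dag^{(t)}$, the ball $B(x_n,\delta)$ would lie entirely inside $U_\dag^{(t)}$ or inside its complement, while the estimates give a definite amount of both $U_n^{(t)}$ and its complement in that ball, contradicting $\abs{U_n^{(t)}\,\Delta\,U_\dag^{(t)}}\to 0$; the reverse inclusion uses the (fixed) density estimates for $U_\dag^{(t)}$ itself. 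The genuinely delicate points are the uniformity of $c$ and $r_0$ across the whole family, granted by the decomposition $v_n=v_m+e_n$ above, and, in the bounded Dirichlet case, the behaviour of level sets touching $\partial\Omega$: there the interior comparison must be supplemented by an exterior density furnished by the assumption $\kappa_\Omega\gs g\in \L^2(\R^2\setminus\Omega)$, whereas in the Neumann setting the Euclidean isoperimetric inequality is replaced by the relative one in $\Omega$, which is the origin of the constant $C(\Omega)$ and of the threshold \eqref{eq:paramChoiceNeumann}.
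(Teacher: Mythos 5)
Your overall route is the same as the paper's: the stability estimate for the noisy dual variables (your strong-concavity argument giving $\norm{p_{\alpha_n,w_n}-p_{\alpha_n}}_\Ltwof \ls \norm{w_n}_\Ltwof/\alpha_n$ is exactly Lemma \ref{lem:paramChoice}, proved there via non-expansiveness of the projection onto $\set{p \,:\, A^\ast p \in \partial \TV{0}}$ --- the same computation), the curvature interpretation of the level sets (Lemma \ref{le:lsp}), the uniform density estimates with the three boundary regimes (Section \ref{sec:density}, including the obstacle extension by $\kappa_\Omega$ in the Dirichlet case and the relative Poincar\'e--Sobolev inequality in the Neumann case), and the contradiction argument upgrading $\L^1$ convergence to Hausdorff convergence (Lemma \ref{lem:l1haus}). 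Your explicit remark that the reverse Hausdorff inclusion needs density estimates for $U_\dag^{(t)}$ itself (available since $U_\dag^{(t)}$ has variational curvature $\operatorname{sgn}(t)\,v_m$) is in fact more careful than the paper, which dismisses that direction with a ``without loss of generality''.

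There is, however, one genuine gap, in the full-space case $\Omega=\R^2$: you call \eqref{eq:mainth1} ``the soft part'' and assert $\norm{u_n-\udag}_{\L^1}\to 0$, but Lemma \ref{lem:uconv} only yields convergence in $\L^1_{\text{loc}}(\R^2)$ (full $\L^1(\Omega)$ only when $\Omega$ is bounded, where indeed nothing more is needed). Local $\L^1$ convergence together with $\TV{u_n}\to\TV{\udag}$ and weak $\L^2$ convergence does not prevent mass from escaping to infinity: $u_n=\udag+n^{-2}1_{B(x_n,n)}$ with $\abs{x_n}=n^2$ satisfies every conclusion of Lemma \ref{lem:uconv}, yet $\norm{u_n-\udag}_{\L^1}=\pi$ for all $n$. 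Ruling this out requires structural input from minimality, and this is precisely the paper's Lemma \ref{le:compact} (Section \ref{sec:cpctspt}): every set satisfying \eqref{eq:curvls2-2} --- note, the \emph{equality} $\per(E)=\int_E v_{\alpha,w}$ of \eqref{eq:curvls2}, not merely the minimality inequality your sketch works with --- has perimeter and mass bounded above and below uniformly in $n$, and, after splitting into connected components (via \cite{AmbCasMasMor01}, as in \cite{ChaDuvPeyPoo17}), all such sets lie in one fixed ball $B(0,R)$; hence all $u_n$ share a compact support and $\L^1_{\text{loc}}$ convergence upgrades to $\L^1(\R^2)$. This is also a second place where the threshold $\eta<2\sqrt{\pi}$ of \eqref{eq:paramChoiceDirichlet} is needed (the upper perimeter bound via the isoperimetric inequality fails without it), and the resulting common ball is moreover an explicit hypothesis of Lemma \ref{lem:l1haus}. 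Without this step, your a.e.-$t$ subsequence extraction has nothing to start from when $\Omega=\R^2$.
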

\begin{remark}
 The restriction $\kappa_\Omega \gs g  \in L^2$ that is put on $\Omega$ in Theorem \ref{th:lsconv}  roughly means that inside corners (where the curvature is negative, see Figure \ref{fig:domains} $(c)$) are not allowed. Indeed, corners are known not to have a curvature in $\L^2(\R^2)$ \cite[Theorem 1.1]{GonMasTam93}. However, many interesting domains satisfy $\kappa_\Omega \gs g$ with $g \in \L^2(\R^2 \setminus \Omega)$ (see Figure \ref{fig:domains} $(a)$ and $(b)$), in particular:
\begin{itemize}
\item Every convex domain, even with corners, has a variational curvature $\kappa_\Omega$ such that 
      $\kappa_\Omega = 0$ on $\R^2 \setminus \Omega$, since a convex set minimizes perimeter among outer perturbations. 
      Indeed, if $F \supset E$, this writes $ \per(F) \gs \per(E) $
which is precisely \eqref{eq:mincpct} after using $\int_{F} \kappa_{\Omega} = \int_{E} \kappa_{\Omega}+\int_{F\setminus E} \kappa_{\Omega}.$
\item Any $C^{1,1}$ domain has a curvature $\kappa_\Omega \in \L^\infty(\R^2)$. To see this, first notice that at boundary points of a $C^{1,1}$ set one can place balls of radius bounded below and completely inside or outside $\Omega$ \cite[Theorems 7.8.2 (ii) and 7.7.3]{DelZol11}. Moreover, it is proved in \cite[Remark 1.3 (ii)]{BarGonTam03} that the variational curvature constructed in \cite{GonMasTam93} for a set with the mentioned property is bounded.
\end{itemize}
\end{remark}

\subsection{Proof of Theorem \ref{th:lsconv}}
 The proof is along the lines of \cite{ChaDuvPeyPoo17}, however taking into account that the operator $A$ is not the identity and that we consider various boundary conditions (Dirichlet and Neumann cases). We give its architecture here, postponing the proofs of the main lemmas to the rest of this section as well as Section \ref{sec:density}.

 The proof consists of two steps which correspond to proving \eqref{eq:mainth1} and \eqref{eq:mainth2} respectively.
\begin{proof}[Proof of \eqref{eq:mainth1}]
Let us show first that it is sufficient to prove the strong convergence of $u_n \to u^\dag$ in $\L^1(\R^2).$ Indeed, Fubini's theorem implies
$$ \int_{\Omega} |u_n - u^\dag| = \int_{-\infty}^{+\infty} |U_n^{(t)} - U_\dag^{(t)}| \, dt.$$ Then, the strong convergence of $u_n$ would imply the $\L^1$ convergence of the sequence of functions $$t\mapsto |U_n^{(t)} - U_\dag^{(t)}|,$$ which would imply that one can find a subsequence (not relabelled) such that for almost every $t$,
$$|U_n^{(t)} - U_\dag^{(t)}| \underset{n \to \infty}{\longrightarrow} 0.$$

We now prove the $\L^1$ convergence of $u_n$. Since the conditions \eqref{eq:paramChoiceDirichlet} and \eqref{eq:paramChoiceNeumann} are stronger 
 than \eqref{eq:paramChoiceConv}, we can apply Lemma \ref{lem:uconv} and it follows that $u_n \to u^\dag$
 strongly in $\L^1_{\text{loc}}(\R^2)$ along a subsequence that we do not relabel. To remove the ``loc'', we will prove that all the $u_n$ are actually supported in a same ball. To see this, we first need to investigate geometrically the level-sets of $u_n$ and see that their variational curvature is related to the maximizer of the dual problem \eqref{eq:dual}.
 \begin{lemma}\label{le:lsp}
 Let 
 \begin{equation}
\label{eq:vaw}
 v_{\alpha,w}= A^\ast p_{\alpha,w} = \frac{1}{\alpha}A^\ast(f+w-Au_{\alpha, w}),
\end{equation}
where $p_{\alpha,w}$ is the maximizer of the dual problem \eqref{eq:dual} with data $f+w$ 
(replacing $f$ in \eqref{eq:dual}). Then each level set $U^{(t)}_{\alpha,w}$ of $u_{\alpha, w}$ has a variational curvature $\operatorname{sgn} (t) v_{\alpha,w}$. In addition,  
\begin{equation}
 \label{eq:curvls2}
 \per(U^{(t)}_{\alpha,w}) = 
 \operatorname{sgn}(t) \int_{U^{(t)}_{\alpha,w}} v_{\alpha,w}.
\end{equation}
\end{lemma}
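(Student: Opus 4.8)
The plan is to derive the perimeter identity \eqref{eq:curvls2} first, directly from the extremality relation and the coarea formula, and then read off the variational curvature property as a consequence. I would start from the extremality condition \eqref{eq:extcond1}, applied with the perturbed data $f+w$, which reads $v_{\alpha,w} = A^\ast p_{\alpha,w} \in \partial\TV{u_{\alpha,w}}$. By the characterization \eqref{eq:gtv} from Lemma \ref{lem:sourceiffdual}, this membership is equivalent to the two statements
\[
v_{\alpha,w} \in \partial\TV{0} \qquad\text{and}\qquad \inner{v_{\alpha,w}}{u_{\alpha,w}}_\Ltwou = \TV{u_{\alpha,w}}.
\]
The first means $\TV{h} \gs \inner{v_{\alpha,w}}{h}_\Ltwou$ for every $h \in \Ltwou$; testing with $h = \pm 1_E$ for a finite-measure set $E$ of finite perimeter (so that $1_E \in \Ltwou$ and $\TV{1_E}=\per(E)$) yields $\abs{\int_E v_{\alpha,w}} \ls \per(E)$, hence the elementary pointwise bound $\operatorname{sgn}(t)\int_{U^{(t)}_{\alpha,w}} v_{\alpha,w} \ls \per(U^{(t)}_{\alpha,w})$ that drives the whole argument.

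The core computation is to express both sides of the equality $\inner{v_{\alpha,w}}{u_{\alpha,w}}_\Ltwou = \TV{u_{\alpha,w}}$ as integrals over the level parameter $t$. On one side, the coarea formula gives $\TV{u_{\alpha,w}} = \int_{-\infty}^{+\infty}\per(\{u_{\alpha,w} > t\})\,dt$, which by invariance of perimeter under complementation equals $\int_{-\infty}^{+\infty}\per(U^{(t)}_{\alpha,w})\,dt$ for the sign convention used in defining the level sets. On the other side, the layer-cake decomposition $u_{\alpha,w} = \int_0^\infty 1_{U^{(t)}_{\alpha,w}}\,dt - \int_{-\infty}^0 1_{U^{(t)}_{\alpha,w}}\,dt$ together with Fubini's theorem (legitimate since $u_{\alpha,w}, v_{\alpha,w} \in \Ltwou$, so their product is in $\L^1$) gives $\inner{v_{\alpha,w}}{u_{\alpha,w}}_\Ltwou = \int_{-\infty}^{+\infty}\operatorname{sgn}(t)\int_{U^{(t)}_{\alpha,w}} v_{\alpha,w}\,dt$. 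Equating the two representations, the integrand $\per(U^{(t)}_{\alpha,w}) - \operatorname{sgn}(t)\int_{U^{(t)}_{\alpha,w}} v_{\alpha,w}$ is nonnegative by the bound above yet integrates to zero, so it vanishes for almost every $t$; this is exactly \eqref{eq:curvls2}.

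To obtain the variational curvature statement I would then note that \eqref{eq:curvls2} says the prescribed-curvature functional $F \mapsto \per(F) - \operatorname{sgn}(t)\int_F v_{\alpha,w}$ takes the value $0$ at $U^{(t)}_{\alpha,w}$, while the inequality $\operatorname{sgn}(t)\int_F v_{\alpha,w} \ls \per(F)$ established above shows this functional is nonnegative on every finite-measure set of finite perimeter. Any competitor $F$ with $F\Delta U^{(t)}_{\alpha,w}$ compactly supported is such a set (recall $\abs{U^{(t)}_{\alpha,w}} < \infty$ for $t \neq 0$), so $U^{(t)}_{\alpha,w}$ is a global minimizer, a fortiori a minimizer over $\mathcal{E}$; hence it has variational curvature $\operatorname{sgn}(t)v_{\alpha,w}$.

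The main obstacle I anticipate is the careful bookkeeping of the level-set and sign conventions together with integrability: making sure the coarea formula transfers to the super-/sub-level sets used here via complement invariance of perimeter, and that the Fubini interchange computing $\inner{v_{\alpha,w}}{u_{\alpha,w}}_\Ltwou$ is justified. A secondary subtlety is that the argument produces the identity only for almost every $t$; promoting it to every nonzero $t$ requires the standard monotonicity/approximation argument on level sets, and the zero level set must be excluded, as noted after the definition of $U^{(t)}_{\alpha,w}$. The Neumann case is identical once $\TV{\cdot}$, $\partial\TV{0}$ and $\per$ are all read relative to $\Omega$.
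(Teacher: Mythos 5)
Your proof is correct and in substance follows the same route as the paper, whose proof simply defers to \cite[Proposition 3]{ChaDuvPeyPoo17}: there, exactly as in your argument, the extremality relation \eqref{eq:extcond1} together with the characterization \eqref{eq:gtv} is sliced using the coarea formula \eqref{eq:coarea} and the layer-cake formula \eqref{eq:layercake}. The one organizational difference is the direction of deduction: the cited proposition establishes the family of minimality inequalities \eqref{eq:curvls} (for every competitor $F$ and every $t \neq 0$) as equivalent to \eqref{eq:extcond1} and then obtains \eqref{eq:curvls2} from \eqref{eq:gtv}, whereas you derive the identity \eqref{eq:curvls2} first and then observe that minimality over the class \eqref{eq:mincpct} is automatic, because testing $v_{\alpha,w} \in \partial\TV{0}$ against $\pm 1_F$ shows the prescribed-curvature functional is nonnegative on every finite-measure, finite-perimeter competitor. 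This is a clean streamlining; its only cost, which you correctly flag yourself, is that the slicing argument initially yields \eqref{eq:curvls2} only for almost every $t$, so the lemma as stated (for \emph{each} level set) still requires the promotion to every $t \neq 0$ via one-sided monotone approximation $t_n \nearrow t$, lower semicontinuity of the perimeter, and the one-sided bound $\operatorname{sgn}(t)\int_{U^{(t)}_{\alpha,w}} v_{\alpha,w} \ls \per(U^{(t)}_{\alpha,w})$; this step is standard and should be written out, though for the purposes of Theorem \ref{th:lsconv} the almost-every-$t$ version already suffices.
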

 
First, the $v_{\alpha,w}$ are related to the $v_\alpha := A^\ast p_\alpha$, $p_\alpha$ being the maximizer of the dual functional $\mathcal D_\alpha$.
\begin{lemma}
 \label{lem:paramChoice}
\begin{equation*}\label{eq:dualNoiseEst}
\Vert v_{\alpha} - v_{\alpha,w} \Vert_{\Ltwou} \ls \frac{ \Vert w \Vert_{\Ltwof} \Vert A^\ast \Vert}{\alpha} \ls \eta,
\end{equation*}
 where $\eta$ is defined in \eqref{eq:paramChoiceDirichlet}.
\end{lemma}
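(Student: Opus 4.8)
The plan is to exploit the fact that the two maximizers $\pa$ and $p_{\alpha,w}$ are obtained by maximizing the \emph{same} strongly concave objective over the \emph{same} feasible set, the only change being an additive shift $w$ in the linear term. By Theorem \ref{th:EkeTem}, both are maximizers over the convex, closed and nonempty set
\[\mathcal C := \set{p \in \Ltwof \ \middle\vert\ A^\ast p \in \partial \TV{0}},\]
with $\pa$ maximizing $\inner{f}{p}_\Ltwof - \tfrac\alpha2 \norm{p}_\Ltwof^2$ and $p_{\alpha,w}$ maximizing $\inner{f+w}{p}_\Ltwof - \tfrac\alpha2\norm{p}_\Ltwof^2$. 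Completing the square, maximizing $\Da{p}$ over $\mathcal C$ is equivalent to minimizing $\norm{p - \tfrac1\alpha f}_\Ltwof^2$ over $\mathcal C$; that is, $\pa$ is precisely the orthogonal projection of $\tfrac1\alpha f$ onto $\mathcal C$, and likewise $p_{\alpha,w}$ is the projection of $\tfrac1\alpha(f+w)$.

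From here I would simply invoke the nonexpansiveness of the metric projection onto a closed convex subset of a Hilbert space to obtain
\[\norm{\pa - p_{\alpha,w}}_\Ltwof \ls \norm{\tfrac1\alpha f - \tfrac1\alpha(f+w)}_\Ltwof = \frac{\norm{w}_\Ltwof}{\alpha}.\]
Applying $A^\ast$ and recalling $\va = A^\ast \pa$, $v_{\alpha,w}=A^\ast p_{\alpha,w}$, the bound $\norm{A^\ast(\pa - p_{\alpha,w})}_\Ltwou \ls \norm{A^\ast}\,\norm{\pa - p_{\alpha,w}}_\Ltwof$ yields the first inequality of the statement. The second inequality, $\ls \eta$, is then immediate from the parameter choice \eqref{eq:paramChoiceDirichlet} (respectively \eqref{eq:paramChoiceNeumann}).

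There is no genuine obstacle here: the entire content is the observation that, because the feasible set is independent of the data, maximizing the $\alpha$-strongly concave dual reduces to a projection, whose $\tfrac1\alpha$-Lipschitz dependence on the data is classical. If one prefers to avoid projection language, the same estimate follows by adding the first-order optimality (variational) inequalities $\inner{f - \alpha\pa}{q - \pa}_\Ltwof \ls 0$ and $\inner{f+w - \alpha p_{\alpha,w}}{q - p_{\alpha,w}}_\Ltwof \ls 0$, valid for all $q \in \mathcal C$, with the choices $q = p_{\alpha,w}$ and $q = \pa$ respectively. After rearrangement this gives $\alpha\norm{\pa - p_{\alpha,w}}_\Ltwof^2 \ls \inner{w}{p_{\alpha,w}-\pa}_\Ltwof \ls \norm{w}_\Ltwof\,\norm{\pa - p_{\alpha,w}}_\Ltwof$, and hence the same bound on $\norm{\pa - p_{\alpha,w}}_\Ltwof$.
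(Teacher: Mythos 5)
Your proposal is correct and follows essentially the same route as the paper: both identify $\pa$ and $p_{\alpha,w}$ as orthogonal projections of $f/\alpha$ and $(f+w)/\alpha$ onto the convex set $\set{p \in \Ltwof \,\middle\vert\, A^\ast p \in \partial \TV{0}}$, then apply nonexpansiveness of the projection and the bound $\norm{A^\ast}$. The completing-the-square justification and the alternative variational-inequality argument you supply are details the paper leaves implicit, but the core idea is identical.
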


Moreover, from Prop. \ref{prop:conv} and the boundedness of $A$ it follows that since the source condition \eqref{eq:srccnd} holds, 
 \begin{equation*}
  \label{eq:equi}
 (\va):=(A^\ast\pa) \overset{\Ltwou}{\longrightarrow} v_0 := A^\ast p_0
 \end{equation*}
 and the family $(v_\alpha)$ is therefore equiintegrable (see Definition \ref{def:equiint} in the appendix).

In the rest of the proof, we see that the fact that the level-sets $U_{\alpha,w}^{(t)}$ have variational curvatures $v_{\alpha,w}$ close to the equiintegrable family $v_\alpha$ implies some uniform regularity. The necessity of the restrictions for $\eta$ of \eqref{eq:paramChoiceDirichlet} and  \eqref{eq:paramChoiceNeumann} is made apparent in Sections \ref{sec:cpctspt} and \ref{sec:density} below. First,

\begin{lemma}\label{le:compact} Assume \eqref{eq:paramChoiceDirichlet}. Then, the elements of
\begin{equation}\label{eq:curvls2-2}
\mathcal{E}:=\set{E \subset \Omega \, \middle | \, \per(E)=\int_E v_{\alpha, w} \text{ with } v_{\alpha,w} \text{ from }\eqref{eq:vaw}},
\end{equation}
have the following properties:
\begin{enumerate}
 \item There exists a constant $C > 0$ such that for all $E \in \mathcal{E}$, $\per(E) \leq C$,
 \item There exists a constant $R > 0$ such that for all $E \in \mathcal{E}$, $E \subseteq \mathcal{B}(0,R)$.
\end{enumerate}
\end{lemma}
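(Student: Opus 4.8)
The plan is to combine the planar isoperimetric inequality, whose sharp constant is exactly $2\sqrt{\pi}$, with the two facts recorded just above the statement: that $v_{\alpha,w}$ differs from the family $\va = A^\ast\pa$ by at most $\eta$ in $\Ltwou$ (Lemma \ref{lem:paramChoice}), and that $(\va)$ converges strongly in $\Ltwou$ and is therefore equiintegrable (Proposition \ref{prop:conv} and Definition \ref{def:equiint}). Throughout I use that each $E\in\mathcal E$ not only satisfies $\per(E)=\int_E v_{\alpha,w}$ but is a minimizer of $F\mapsto\per(F)-\int_F v_{\alpha,w}$ among compact perturbations, as provided by Lemma \ref{le:lsp} and \eqref{eq:mincpct}.

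For the perimeter bound (item 1), I would take $E\in\mathcal E$ and split $v_{\alpha,w}=\va+(v_{\alpha,w}-\va)$, estimating the two contributions to $\per(E)=\int_E v_{\alpha,w}$ separately. For the difference, Cauchy--Schwarz and Lemma \ref{lem:paramChoice} give $\int_E(v_{\alpha,w}-\va)\ls\eta\,\abs{E}^{1/2}$, and the isoperimetric inequality $\per(E)\gs 2\sqrt{\pi}\,\abs{E}^{1/2}$ converts this into $\int_E(v_{\alpha,w}-\va)\ls\frac{\eta}{2\sqrt{\pi}}\per(E)$. For the main term, equiintegrability of $(\va)$ furnishes a uniform bound $\int_E\va\ls\int_E\abs{\va}\ls K$ with $K$ independent of $\alpha$ and of $E$. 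Collecting, $\per(E)\ls K+\frac{\eta}{2\sqrt{\pi}}\per(E)$, and since $\eta<2\sqrt{\pi}$ the coefficient $\frac{\eta}{2\sqrt{\pi}}$ is strictly below one, so it is absorbed on the left to yield $\per(E)\ls\frac{2\sqrt{\pi}}{2\sqrt{\pi}-\eta}K=:C$. This is exactly where the threshold $\eta<2\sqrt{\pi}$ enters.

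For the uniform bounded support (item 2), I would compare $E$ with the truncation $E\cap B(0,r)$, made admissible by first comparing with $(E\cap B(0,r))\cup(E\setminus B(0,r'))$ and letting $r'\to\infty$. Writing $m(r):=\mathcal{H}^1(E\cap\partial B(0,r))$, minimality gives for a.e. $r$ that $\per(E;\R^2\setminus\overline{B(0,r)})-m(r)\ls\int_{E\setminus B(0,r)}\abs{v_{\alpha,w}}$. Feeding this into the isoperimetric inequality for the outer piece $E\setminus B(0,r)$ and setting $U(r):=\abs{E\setminus B(0,r)}$, so that $U'(r)=-m(r)$, together with the splitting $\int_{E\setminus B(0,r)}\abs{v_{\alpha,w}}\ls\omega(r)+\eta\,U(r)^{1/2}$ where $\omega(r):=\sup_\alpha\int_{\R^2\setminus B(0,r)}\abs{\va}$, I obtain a differential inequality of the form $(2\sqrt{\pi}-\eta)\,U(r)^{1/2}+2U'(r)\ls\omega(r)$. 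Since $\eta<2\sqrt{\pi}$, the coefficient of $U^{1/2}$ is positive, the tail $\omega(r)\to0$ by the tightness contained in equiintegrability of $(\va)$, and the decreasing nonnegative function $U$ is then forced to vanish for $r$ beyond some $R$ that is uniform over $\mathcal E$, giving $E\subseteq B(0,R)$.

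I expect the main obstacle to be the far-field control in item 2: justifying admissibility of the truncated competitor and, above all, closing the differential inequality so as to conclude genuine boundedness (that $U$ reaches $0$, not merely that $U(r)\to0$). This rests on the uniform tail estimate $\omega(r)\to0$, i.e.\ on the tightness part of the equiintegrability of $(\va)$, and on the strict inequality $\eta<2\sqrt{\pi}$ keeping $2\sqrt{\pi}-\eta$ positive; without either ingredient the argument collapses. Producing the single radius $R$ simultaneously for all $\alpha$ and all $E\in\mathcal E$, rather than an $\alpha$-dependent one, is the delicate point, and is precisely what passing to the equiintegrable limit family $(\va)$ is designed to supply.
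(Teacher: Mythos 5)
Your overall architecture (absorb the noise term using $\eta<2\sqrt{\pi}$ and the isoperimetric inequality, control the tail mass $U(r)=\abs{E\setminus B(0,r)}$ by a differential inequality) is reasonable, but both items rest on a step that is genuinely false: you use the $\L^2$-equiintegrability of $(\va)$ as if it gave uniform $\L^1$ control. In item 1, the claimed bound $\int_E\abs{\va}\ls K$ with $K$ independent of $E$ and $\alpha$ does not exist: an $\L^2(\R^2)$ function need not be integrable, and already for a single $v\in\L^2(\R^2)$ (take $v(x)=\abs{x}^{-1}(\log\abs{x})^{-2}$ for $\abs{x}\gs 2$, extended by zero) the integrals $\int_E\abs{v}$ over sets of finite measure are unbounded. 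In item 2 the same confusion is worse: $\omega(r)=\sup_\alpha\int_{\R^2\setminus B(0,r)}\abs{\va}$ may equal $+\infty$ for every $r$, and Definition \ref{def:equiint} provides smallness only of $\L^2$ tails, so neither finiteness of $\omega$ nor $\omega(r)\to0$ is available. Moreover, even granting a finite $\omega(r)\to0$, your additive inequality $(2\sqrt{\pi}-\eta)\sqrt{U}+2U'\ls\omega(r)$ only forces $U(r)\to0$, not $U(R)=0$ at a finite uniform $R$; this is exactly the obstacle you flagged at the end but did not resolve, so item 2 as written does not reach the conclusion $E\subseteq B(0,R)$.

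Both defects are repaired by the same move, which is in essence what the paper does: apply Cauchy--Schwarz so that every far-field contribution appears multiplied by a square root of measure, and then absorb that square root into the perimeter. For item 1, split over $E\cap B(0,R)$ and $E\setminus B(0,R)$ to get $\per(E)=\int_E v_{\alpha,w}\ls\bigl(\eta+\sup_\alpha\norm{\va}_{\Ltwou}\bigr)\sqrt{\pi}\,R+(\eta+\eps)\abs{E\setminus B(0,R)}^{1/2}$, and then use $\abs{E\setminus B(0,R)}^{1/2}\ls\frac{1}{2\sqrt{\pi}}\bigl(\per(E)+\per(B(0,R))\bigr)$; since $(\eta+\eps)/(2\sqrt{\pi})<1$ the perimeter is absorbed --- this is precisely the paper's proof of item 1. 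For item 2, Cauchy--Schwarz gives $\int_{E\setminus B(0,r)}\abs{v_{\alpha,w}}\ls(\eta+\tilde\omega(r))\,U(r)^{1/2}$ with $\tilde\omega(r):=\sup_\alpha\norm{\va}_{\L^2(\R^2\setminus B(0,r))}\to0$, so your inequality becomes the multiplicative one $(2\sqrt{\pi}-\eta-\tilde\omega(r))\sqrt{U}\ls-2U'$; once $\tilde\omega(r)\ls\eps$, $\sqrt{U}$ decreases at a fixed linear rate from the uniformly bounded initial value $\sqrt{\abs{E}}$ (item 1 plus the isoperimetric inequality), hence vanishes at a radius uniform over $\mathcal{E}$. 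With these corrections your item 2 becomes a legitimate alternative to the paper's route, which instead decomposes $E$ into indecomposable components (via \cite{ChaDuvPeyPoo17} and \cite{AmbCasMasMor01}), bounds their mass and perimeter from below, and concludes from the bounds on their number and diameter; your differential inequality avoids that machinery. A final, minor point: you justify minimality of $E$ by Lemma \ref{le:lsp}, which concerns level sets, whereas \eqref{eq:curvls2-2} contains every set with $\per(E)=\int_E v_{\alpha,w}$; for these, minimality (in fact against arbitrary competitors of finite measure, which makes your $r'\to\infty$ device unnecessary) follows from $v_{\alpha,w}=A^\ast p_{\alpha,w}\in\partial\TV{0}$, which yields $\per(F)\gs\int_F v_{\alpha,w}$ for every such $F$.
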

 This lemma implies that the level-sets of $u_n$, which belong to $\mathcal E$, are contained in some $B(0,R).$ That means that the $u_n$ are all supported in a common ball. From their $\L^1_{\text{loc}}$ convergence, we then deduce the full $\L^1$ convergence of $u_n$ to $u^\dag$ and therefore \eqref{eq:mainth1}. 
\end{proof}   

\begin{proof}[Proof of \eqref{eq:mainth2}]
We will deduce \eqref{eq:mainth2} from \eqref{eq:mainth1}. This step is performed through stronger regularity for the level-sets $U_n$. In our case, the adequate property is termed \emph{weak-regularity} in \cite{ChaDuvPeyPoo17}, and relates to the well-known density 
estimates for $\Lambda$-minimizers of perimeter \cite[Theorem 21.11]{Mag12}. This is the main ingredient of the proof of Theorem \ref{th:lsconv}. We can state it as
\begin{lemma}
\label{lem:densest}
Under the assumptions of Theorem \ref{th:lsconv} (in particular \eqref{eq:paramChoiceDirichlet}, \eqref{eq:paramChoiceNeumann} resp.), the level-sets $U_{\alpha,w}^{(t)}$ satisfy the following property. There exists $C >0$  and $r_0 >0$ (which do not depend on $\alpha$, $w$ or $t$ provided \eqref{eq:paramChoiceDirichlet} (resp. \eqref{eq:paramChoiceNeumann}) is satisfied) such that for every $\alpha$, $w$ satisfying \eqref{eq:paramChoiceDirichlet} (resp. \eqref{eq:paramChoiceNeumann}) and every $r < r_0$ and $x \in \partial U_{\alpha,w}^{(t)}$, one has 
\begin{equation}\label{eq:densityest}
\frac{|B(x,r) \cap U_{\alpha,w}^{(t)}|}{|B(x,r)|}\gs C \text{ and }\frac{|B(x,r) \setminus U_{\alpha,w}^{(t)}|}{|B(x,r)|}\gs C.\end{equation}
\end{lemma}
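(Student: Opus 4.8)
The plan is to establish the uniform density estimates \eqref{eq:densityest} for the level-sets $U_{\alpha,w}^{(t)}$ by viewing them as $\Lambda$-minimizers of perimeter (also called almost-minimizers or quasi-minimizers) with a $\Lambda$ that is uniform over the admissible parameter regime. By Lemma \ref{le:lsp}, each level-set $E:=U_{\alpha,w}^{(t)}$ has variational curvature $\operatorname{sgn}(t)\, v_{\alpha,w}$, meaning it minimizes $F \mapsto \per(F)-\operatorname{sgn}(t)\int_F v_{\alpha,w}$ among compactly supported perturbations. The first step is to convert this into an almost-minimality inequality: for a competitor $F$ with $F \Delta E$ contained in a small ball $B(x,r)$, the minimality \eqref{eq:mincpct} gives
\begin{equation*}
\per(E;B(x,r)) \ls \per(F;B(x,r)) + \operatorname{sgn}(t)\int_{F\Delta E} v_{\alpha,w} \ls \per(F;B(x,r)) + \int_{B(x,r)}|v_{\alpha,w}|.
\end{equation*}
The crux is to bound $\int_{B(x,r)}|v_{\alpha,w}|$ by $\Lambda\, r^2$ or more precisely by a term that, combined with Hölder, behaves like the volume term in the definition of a $\Lambda$-minimizer, \emph{with $\Lambda$ independent of $\alpha$, $w$ and $t$}.

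\textbf{The main obstacle} I anticipate is precisely this uniformity: I must control $\int_{B(x,r)}|v_{\alpha,w}|$ uniformly in the parameters. Here the structure built up earlier pays off. By Lemma \ref{lem:paramChoice} we have $\norm{v_\alpha-v_{\alpha,w}}_{\Ltwou}\ls \eta$, and by Proposition \ref{prop:conv} together with the discussion after Lemma \ref{lem:paramChoice} the family $(v_\alpha)=(A^\ast p_\alpha)$ converges strongly in $\Ltwou$ to $v_0=A^\ast p_0$ and is therefore equiintegrable (Definition \ref{def:equiint}). Consequently the family $(v_{\alpha,w})$ is \emph{uniformly} equiintegrable: for every $\delta>0$ there is $\rho>0$ such that $|B|<\rho$ forces $\int_B |v_{\alpha,w}|<\delta$ for all admissible $\alpha,w$. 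Applying Cauchy--Schwarz, $\int_{B(x,r)}|v_{\alpha,w}|\ls \norm{v_{\alpha,w}}_{\Ltwou}\,|B(x,r)|^{1/2}$, and the $\Ltwou$ norms are bounded uniformly (again by Lemma \ref{lem:paramChoice} plus convergence of $v_\alpha$), so the right-hand side is controlled by $C\, r$. This is the source of the scaling that makes the level-sets $\Lambda$-minimizers with a uniform constant, and it is where the restriction on $\eta$ in \eqref{eq:paramChoiceDirichlet} and \eqref{eq:paramChoiceNeumann} enters, ensuring the perturbation does not destroy almost-minimality at small scales.

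Once the uniform $\Lambda$-minimality is in hand, I would invoke the standard density estimates for $\Lambda$-minimizers of perimeter, as in \cite[Theorem 21.11]{Mag12}: at any boundary point $x\in\partial E$ and for radii $r$ below a threshold $r_0$ depending only on $\Lambda$, one has both lower bounds in \eqref{eq:densityest}. The comparison argument underlying that theorem fills $E$ inside $B(x,r)$ (respectively empties it) and uses the relative isoperimetric inequality in the ball together with the almost-minimality inequality above; the uniformity of $\Lambda$ over the parameter family directly yields uniformity of $C$ and $r_0$. For the Dirichlet and full-space cases the argument is carried out in $\R^2$ using the global perimeter, whereas the Neumann case requires the relative perimeter in $\Omega$ and is where the Sobolev--Poincaré constant $C(\Omega)$ from \eqref{eq:paramChoiceNeumann} governs the threshold; the hypothesis $\kappa_\Omega\gs g\in\L^2(\R^2\setminus\Omega)$ in the Dirichlet bounded case guarantees that the competitor built near $\partial\Omega$ does not pick up uncontrolled curvature contributions from the domain boundary. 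The boundary-condition-dependent details of these comparison constructions are the part I expect to require the most care, and they are deferred to Section \ref{sec:density}.
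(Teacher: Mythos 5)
Your reduction to standard $\Lambda$-minimizer theory has a genuine gap, and it sits exactly at the critical point of the whole lemma. With a variational curvature only in $\L^2(\R^2)$ (which is the situation here --- dimension $2$ is the \emph{critical} integrability for this argument), the error term in your comparison inequality scales like the perimeter itself, not like a volume term: Cauchy--Schwarz gives $\int_{E\cap \Br}|v_{\alpha,w}| \ls \norm{v_{\alpha,w}}_{\L^2(\Br)}\,|E\cap \Br|^{1/2}$, and since $|E \cap \Br|^{1/2}\ls \sqrt{\pi}\,r$ this is of order $r$, the same order as $\per(E;\Br)$ --- not of order $r^2$ as the term $\Lambda|E\Delta F|$ in the definition of a $(\Lambda,r_0)$-minimizer. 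So the level-sets are \emph{not} exhibited as $\Lambda$-minimizers and \cite[Theorem 21.11]{Mag12} cannot be invoked as a black box; the whole point of that theorem's hypothesis is that the volume error is of higher order than the perimeter. Worse, your claim that the family $(v_{\alpha,w})$ is ``uniformly equiintegrable'' is false in the $\L^2$ sense that the argument needs: by Lemma \ref{lem:paramChoice} the noise contribution satisfies only $\norm{v_{\alpha,w}-v_\alpha}_{\Ltwou}\ls\eta$, and this bound does \emph{not} improve on small balls (the perturbation $\frac{1}{\alpha}A^\ast w$ may concentrate anywhere). Hence $\norm{v_{\alpha,w}}_{\L^2(\Br)}\ls \epsilon + \eta$, where only the $\epsilon$ (from the $\L^2$-equiintegrability of the noiseless family $(v_\alpha)$, Prop.~\ref{prop:conv}) vanishes as $r\to 0$; the coefficient tends to $\eta>0$, so no argument relying on a vanishing error modulus can work. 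The $\L^1$ statement you actually wrote ($\int_B|v_{\alpha,w}|$ small when $|B|$ is small) is true but useless, because the bound must be against $|E\cap\Br|^{1/2}$, which may a priori be far smaller than $|\Br|^{1/2}$ --- that is precisely what the lemma is trying to rule out.

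This is why the paper does not cite the $\Lambda$-minimizer theorem but reruns the density-estimate scheme quantitatively: from the comparison inequality \eqref{eq:compareGen} (proved for a.e.\ $r$ via the set-operation formulas \eqref{eq:1610}--\eqref{eq:1611} and Federer's theorem), Cauchy--Schwarz gives the error $(\epsilon+\eta)|E\cap\Br|^{1/2}$, which is then \emph{absorbed into the isoperimetric lower bound} $\per(E\cap\Br)\gs 2\sqrt{\pi}\,|E\cap\Br|^{1/2}$, leaving the differential inequality $(2\sqrt{\pi}-\epsilon-\eta)\sqrt{g}\ls 2g'$ for $g(r)=|E\cap\Br|$, which integrates to $g(r)\gs cr^2$. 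This is exactly where the strict thresholds in \eqref{eq:paramChoiceDirichlet} and \eqref{eq:paramChoiceNeumann} come from: one needs $\eta<2\sqrt{\pi}$ (the planar isoperimetric constant), resp.\ $\eta<1/C(\Omega)$ (the constant in the relative Poincar\'e--Sobolev inequality \eqref{eq:PoincareSet}, which replaces the isoperimetric inequality in the Neumann case), and the final constant degrades like $(2\sqrt{\pi}-\epsilon-\eta)^2/16\pi$. Your proposal gestures at ``the restriction on $\eta$ entering,'' but your actual estimates never produce these thresholds and could not, since they lose the factor $|E\cap\Br|^{1/2}$. Finally, deferring the Dirichlet-in-a-bounded-domain and Neumann constructions ``to Section \ref{sec:density}'' is circular --- those constructions (extending the variational problem to $\R^2$ with curvature $\kaw=v_{\alpha,w}1_\Omega+g1_{\R^2\setminus\Omega}$ via the obstacle argument, resp.\ substituting the relative isoperimetric inequality) \emph{are} part of the content of the lemma you are proving.
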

We prove this property for all the different boundary conditions in Section \ref{sec:density}. To conclude the proof of \eqref{eq:mainth2}, one need the
\begin{lemma}
 \label{lem:l1haus}
Let $E_n, F$ measurable subsets of $\Omega$ such that $E_n \overset{\L^1(\Omega)}{\longrightarrow} F$, such that there exists $R>0$ independent of $n$ with $E_n \subset B(0,R)$ and such that the conclusion of Lemma \ref{lem:densest} holds (with $U_{\alpha,w}^{(t)}$ replaced by $E_n$). Then, the convergence holds in Hausdorff sense, that is the Hausdorff distance (Definition \ref{def:hausdist})
$$ d_H(E_n, F) \to 0.$$
\end{lemma}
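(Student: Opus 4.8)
The plan is to work throughout with the precise (Lebesgue) representatives of $E_n$ and $F$, identifying each set with the support of its characteristic function; with this normalization the sets are closed, the topological boundary coincides with the measure-theoretic one, and the density estimate \eqref{eq:densityest} applies at every point of $\partial E_n$. Since $d_H$ is a maximum of two suprema, it suffices to show separately that $\sup_{x\in E_n} d(x,F)\to 0$ and $\sup_{y\in F} d(y,E_n)\to 0$. I would argue each by contradiction, using the single mechanism that $|E_n\Delta F|\to 0$ forces the measure of $E_n\Delta F$ in any fixed-size ball to vanish, whereas the lower bound in \eqref{eq:densityest} keeps that measure bounded below on a ball centred near a boundary point.

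For the first supremum, suppose along a subsequence there is $x_n\in E_n$ with $d(x_n,F)\gs\delta>0$, so $B(x_n,\delta/2)\cap F=\emptyset$. If $B(x_n,\delta/2)\subseteq E_n$ up to a null set, then $|B(x_n,\delta/2)\cap(E_n\Delta F)|=\pi(\delta/2)^2$, contradicting $|E_n\Delta F|\to 0$. Otherwise the ball carries positive measure of both $E_n$ and its complement, hence contains a boundary point $z_n\in\partial E_n$ still at distance at least $\delta/2$ from $F$; applying \eqref{eq:densityest} at $z_n$ with $r=\tfrac12\min(\delta,r_0)$ gives $|B(z_n,r)\cap E_n|\gs C\pi r^2$, and since $B(z_n,r)\cap F=\emptyset$ this again contradicts $|E_n\Delta F|\to 0$. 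The uniform radius $r_0$ is essential: it is exactly what excludes small pieces of $E_n$ that drift away from $F$ while carrying little mass.

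The second supremum is the more delicate one, and I regard it as the main obstacle, because it needs a lower density bound for the limit $F$ while \eqref{eq:densityest} is assumed only for the $E_n$. I would first prove a transfer statement: for every $y\in F$ and every $r<r_0$, $|B(y,r)\cap F|\gs C'r^2$ for a uniform $C'$. Since $y$ lies in the support, $|B(y,r/4)\cap F|>0$, so $L^1$ convergence furnishes for large $n$ a point $x_n\in E_n\cap B(y,r/4)$; locating as above a boundary point $z_n\in\partial E_n$ inside $B(y,3r/4)$ whenever $E_n$ does not already fill $B(x_n,r/2)$, and applying \eqref{eq:densityest} at $z_n$, yields $|B(y,r)\cap E_n|\gs C'r^2$ uniformly in $n$; passing $|B(y,r)\cap E_n|\to|B(y,r)\cap F|$ to the limit gives the claim.

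With the transfer bound in hand the second part is immediate: if $y_n\in F$ satisfied $d(y_n,E_n)\gs\delta$, then $B(y_n,r)\cap E_n=\emptyset$ for $r\ls\delta$, while $|B(y_n,r)\cap F|\gs C'r^2$ for $r<r_0$, so for $r=\tfrac12\min(\delta,r_0)$ we get $|B(y_n,r)\cap(E_n\Delta F)|=|B(y_n,r)\cap F|\gs C'r^2$, contradicting $|E_n\Delta F|\to 0$ once more. The hypothesis $E_n\subset B(0,R)$ guarantees that $F\subseteq\overline{B(0,R)}$ and that all the sets and extracted points remain in one fixed compact region, so the two suprema are genuinely taken over bounded sets. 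Combining the two parts yields $d_H(E_n,F)\to 0$ for the full sequence, since each contradiction only used $|E_{n}\Delta F|\to 0$ along the subsequence where the supremum stays bounded below.
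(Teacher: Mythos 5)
Your proof is correct, and it is actually more careful than the paper's own argument, which is only a few lines long. The paper uses exactly your core mechanism for the term $\sup_{x \in E_n} d(x,F)$: a point $x_n \in E_n$ with $d(x_n,F) \gs \delta$ gives $B(x_n,\delta) \cap F = \emptyset$, and the density estimate forces $|E_n \Delta F| \gs |E_n \cap B(x_n,\delta)| \gs C \delta^2$, contradicting the $\L^1$ convergence; note that the paper invokes \eqref{eq:densityest} directly at $x_n \in E_n$ even though the estimate is stated at boundary points, a gloss that your dichotomy (either $B(x_n,\delta/2)$ is filled by $E_n$ up to a null set, or it contains a point of $\partial E_n$ still far from $F$ where \eqref{eq:densityest} applies) repairs cleanly. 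The genuine divergence is in the second term: the paper treats the two suprema as symmetric (``suppose without loss of generality that the first term \dots does not converge to $0$''), but as you correctly point out they are not, since bounding $\sup_{y \in F} d(y,E_n)$ requires a lower volume bound for the \emph{limit} set $F$ near its points, which is not among the hypotheses of the lemma. Your transfer statement --- that the uniform density bounds of the $E_n$ pass to $F$ through the $\L^1$ convergence --- fills this gap and is a real addition relative to the paper (in the paper's application the gap is harmless because $F = U^{(t)}_\dag$ is itself a level-set of $u^\dag$ with variational curvature $A^* p_0 \in \L^2$, hence satisfies its own density estimates, but the lemma as stated does not assume this). For comparison, there is also a cheaper fix using the hypothesis $E_n \subset B(0,R)$, which your argument never needs: if $y_n \in F$ with $d(y_n,E_n) \gs \delta$, compactness of $\overline{B(0,R)}$ gives $y_n \to y \in F$ (support representative) along a subsequence, so $B(y,\delta/2) \cap E_n = \emptyset$ for large $n$ and $|E_n \Delta F| \gs |B(y,\delta/2) \cap F| > 0$, a constant independent of $n$. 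That route is qualitative and uses the uniform boundedness; yours yields the stronger quantitative conclusion that $F$ inherits uniform density estimates, which is exactly the statement one would want if the lemma were to be iterated or made uniform in $t$.
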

Applying this lemma to $U_n^{(t)}$ for a $t$ such that \eqref{eq:mainth1} holds, we conclude the proof of \eqref{eq:mainth2}.
\end{proof}

\begin{proof}[Proof of Lemma \ref{lem:l1haus}]
Let us consider the definition of Hausdorff distance:
\begin{equation*}d_H(E_n, F)= \max\set{\sup_{x \in E_n}\, \inf_{y \in F} |x - y|,\, \sup_{y \in F}\, \inf_{x \in E_n} |x - y|},\end{equation*}
and suppose without loss of generality that the first term of the right hand side does not converge to 0. This would imply that there is a $\delta > 0$ (we can take $\delta < r_0$) and $x_n \in E_n$ such that $d(x_n, F)\gs \delta$, and in particular $B(x_n, \delta) \cap F = \emptyset$. This implies using the density estimate \eqref{eq:densityest} that
\[\left|E_n \,\Delta\, F \right| \gs \left| (E_n \cap B(x_n, \delta)) \setminus F \right| = \left| E_n \cap B(x_n, \delta) \right| \gs C\delta^2,\]
contradicting the $\L^1$ convergence. 
\end{proof}

\subsection{The level-sets have prescribed curvature: proof of Lemma \ref{le:lsp}}
\begin{proof}[Proof of Lemma \ref{le:lsp}]
It is proved in \cite[Proposition 3]{ChaDuvPeyPoo17} by slicing equations \eqref{eq:extcond1} and \eqref{eq:gtv} and using the coarea and layer cake formulas \eqref{eq:coarea} and \eqref{eq:layercake} that the extremality relation \eqref{eq:extcond1} is equivalent to the statement that for every $F \subset \Omega$ and every $t \neq 0$
\begin{equation}
 \per(F) - \operatorname{sgn}(t) \int_F v_{\alpha,w} \gs \per(U^{(t)}_{\alpha,w}) - 
 \operatorname{sgn}(t) \int_{U^{(t)}_{\alpha,w}} v_{\alpha,w},
 \label{eq:curvls}
\end{equation}
which implies that $U^{(t)}_{\alpha,w}$ has a variational curvature $ \operatorname{sgn} (t) v_{\alpha,w}$. Furthermore, it is also shown in \cite[Proposition 3]{ChaDuvPeyPoo17} that \eqref{eq:gtv} implies that the $U^{(t)}_{\alpha,w}$ satisfy
\begin{equation*}
 \per(U^{(t)}_{\alpha,w}) = 
 \operatorname{sgn}(t) \int_{U^{(t)}_{\alpha,w}} v_{\alpha,w}.
\end{equation*}\end{proof}

\subsection{Parameter choice: proof of Lemma \ref{lem:paramChoice}}
\begin{proof}[Proof of Lemma \ref{lem:paramChoice}]
From \eqref{eq:dual} with $f$ replaced by $f+w$ it follows that $p_{\alpha,w}$ is the $\Ltwof$ orthogonal projection of $\frac{f+w}{\alpha}$ onto the convex set 
$$\set{ p \in \Ltwof \, \ \middle \vert\  A^\ast p \in \partial \TV{0}}.$$ 
The non-expansiveness of the projection operator leads to
$$\Vert p_{\alpha} - p_{\alpha,w} \Vert_{\Ltwof} \ls \frac{ \Vert w \Vert_{\Ltwof}}{\alpha},$$
which, together with the boundedness of $A^\ast$, means that 
\begin{equation}
\Vert v_{\alpha} - v_{\alpha,w} \Vert_{\Ltwou} \ls \frac{ \Vert w \Vert_{\Ltwof} \Vert A^\ast \Vert}{\alpha} \ls \eta,
\label{eq:closetonoiseless}
\end{equation}
where $\eta$ is defined in \eqref{eq:paramChoiceDirichlet}.
\end{proof}

\subsection{Upper bounds and compact support: proof of Lemma \ref{le:compact}}
\label{sec:cpctspt}

\begin{proof}[Proof of Lemma \ref{le:compact}]
We distinguish between the following cases:
\begin{itemize}
\item Let $\Omega$ be bounded, a bound on the perimeter follows easily from \eqref{eq:curvls2} and \eqref{eq:paramChoiceDirichlet} (resp. \eqref{eq:paramChoiceNeumann}):
\begin{equation*}\label{eq:dirichletupperbound}
\begin{aligned}
 \per(E) \ls \left \vert \int_E (v_{\alpha,w} - v_{\alpha}) \right \vert + \left \vert \int_E v_{\alpha} \right \vert &\ls \eta \sqrt{|\Omega|} + \sqrt{|\Omega|}\Vert v_{\alpha} \Vert_{\Ltwou} \\
 & \ls \left( \eta + \sup_{\alpha} \Vert v_{\alpha} \Vert_{\Ltwou} \right) \sqrt{|\Omega|}.
\end{aligned}
\end{equation*}

\item If $\Omega = \R^2$, then the proof is very similar to what is done in \cite{ChaDuvPeyPoo17}. 
We sketch now the arguments given in \cite{ChaDuvPeyPoo17}, that apply directly to this case. 

Here, by Prop. \ref{prop:conv}, we have that $v_{\alpha} \to v_0$ strongly in $\Ltwou$, and therefore the family $(v_{\alpha})$ is $\mathrm{L}^2$-equiintegrable, which in particular means that for every $\epsilon > 0$, one can find a ball $B(0,R)$ such that 
$$ \int_{\R^2 \setminus B(0,R)} v_{\alpha}^2 \ls \epsilon.$$
Then, for every $E$ with finite mass that satisfies \eqref{eq:curvls2-2} and provided $\alpha$ and $w$ satisfy
\eqref{eq:paramChoiceDirichlet},
\begin{align*}
 \per(E) &\ls \left \vert \int_E (v_{\alpha,w} - v_{\alpha}) \right \vert + \left \vert \int_{E \cap B(0,R)} v_{\alpha} \right \vert + \left \vert \int_{E \setminus B(0,R)} v_{\alpha} \right \vert \\
 & \ls \eta \sqrt{|E|} + \sqrt{|B(0,R)|}\Vert v_{\alpha} \Vert_{\Ltwou} + \sqrt{|E \setminus B(0,R)|} \epsilon \\
 & \ls \left( \eta + \sup_{\alpha} \Vert v_{\alpha} \Vert_{\Ltwou} \right) \sqrt{|B(0,R)|} + (\eta+\epsilon) \sqrt{|E\setminus B(0,R)|}.
\end{align*}
Now, the isoperimetric inequality (that is, $4\pi \abs{E} \leq \per(E)^2$) and sub-additivity of the perimeter lead to
$$ \sqrt{|E \setminus B(0,R)|} \ls \frac{1}{\sqrt{4\pi}}\per (E\setminus B(0,R)) \ls \frac{1}{\sqrt{4\pi}} \big( \per(E) + \per(B(0,R)) \big),$$
which when used in the previous equation, since $\epsilon$ is arbitrary and $\eta < 2 \sqrt{\pi}$, implies that $ \per(E)$ is bounded uniformly in $\alpha$. Once again using the isoperimetric inequality yields the boundedness of $|E|$ independently of $\alpha$, as long as \eqref{eq:paramChoiceDirichlet} is satisfied.
\end{itemize}

We now prove that the mass and perimeter of level-sets of $u_{\alpha,w}$ are bounded away from zero. The equiintegrability of $(v_{\alpha})$ ensures that there is no concentration of mass for $v_{\alpha}$: 
$\int_E v_{\alpha}^2$ is small if $|E|$ is small. Then, if $E$ satisfies \eqref{eq:curvls2-2}, Cauchy Schwarz inequality provides an inequality of the type
$$ \per(E) \ls \eps \sqrt{|E|},$$
which together with the isoperimetric inequality, implies $\per(E) \ls C \eps \per(E)$, which is not possible for $\eps$ too small. Therefore, $|E|$ must be bounded away from zero (and $\per(E)$ as well thanks to the isoperimetric inequality).

It is shown in \cite[Remark 4]{ChaDuvPeyPoo17} (using \cite[Corollary 1]{AmbCasMasMor01}) that if $E$ has a finite mass and satisfies \eqref{eq:curvls2-2}, $E$ can be split into connected components which also satisfy \eqref{eq:curvls2-2}. Therefore, the perimeter and mass of such components are bounded from above and below, which implies that there can only be finitely many of them. Since their perimeter is bounded, their diameter is bounded too, which implies that they all lie in a ball $B(0,R).$ So does $E$.
\end{proof}
\begin{remark}
As a byproduct of the previous proof, one can notice that all level-sets of $u^\dag$ belong to some ball $B(0,R)$, which means that $\partial \TV{u^\dag} \neq \emptyset$ implies that $u^\dag$ has a compact support. To our knowledge, this property was never stated before, although it is implicit in \cite{ChaDuvPeyPoo17}. Since it is a result on the subgradient, it applies whether $A = \Id$ or not.
\end{remark}

\section{Proof of the density estimates: proof of Lemma \ref{lem:densest}}\label{sec:density}
In this section, we prove Lemma \ref{lem:densest}, that is we derive the density estimates \eqref{eq:densityest} in each of the three boundary frameworks that are mentioned in this article. The proof follows the usual strategy for this kind of estimates (see \cite{Mag12}, for example), but the appearance of different boundary conditions requires a closer examination.

The general strategy of the proof is to use minimality of a set in problem \eqref{eq:curvls} and compare it with the sets obtaining by adjoining or substracting pieces of balls centered at a point of its boundary, leading to the first and second parts of \eqref{eq:densityest} respectively.

In what follows we consider only the first estimate, since the second one can be derived analogously. We emphasize that the bounds obtained need to be uniform in $\alpha$ in order to obtain the desired convergence.

Let us first prove the
\begin{lemma}
 Let $\kappa \in \L^2(\Omega)$ (with $\Omega$ bounded or $\Omega = \R^2$) and $E \subset \Omega$ minimize 
 $$ F \mapsto \per(F \, ;\, \Omega) - \int_F \kappa,$$
 among $F \subset \Omega$ Lebesgue measurable.
 Then, one has for almost every $r$
\begin{equation} \per(E \cap \Br) - \int_{E\cap \Br} \kappa \ls 2 \per(\Br\, ;\, E^{(1)}), \label{eq:compareGen} \end{equation}
where $E^{(1)}$ denotes the set of points of density $1$ in $E$ (see Definitions \ref{def:densitypoints} and \ref{def:relativeper} in the appendix).
\end{lemma}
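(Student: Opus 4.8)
The plan is to exploit the minimality of $E$ against the single competitor $F = E \setminus \Br$ obtained by carving out the ball, and then to re-express the resulting inequality entirely in terms of relative perimeters by means of the standard slicing identities for intersections with balls. Since the minimization in the statement is over all Lebesgue measurable $F \subset \Omega$, the set $E \setminus \Br$ is automatically admissible. Testing minimality and noting that $E$ and $E \setminus \Br$ differ precisely on $E \cap \Br$, which has finite measure, the two integrals of $\kappa$ combine into the single finite quantity $\int_{E \cap \Br} \kappa$ (finite because $\kappa \in \L^2(\Omega)$ and $|E \cap \Br| \ls |\Br| < \infty$, by Cauchy--Schwarz). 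This yields
$$\per(E\, ;\, \Omega) - \per(E \setminus \Br\, ;\, \Omega) \ls \int_{E \cap \Br} \kappa.$$

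Next I would insert the measure-theoretic decompositions of the perimeters, all valid for almost every $r$. For a.e. $r$ the circle $\partial \Br$ carries no $|D 1_E|$-mass, so that
$$\per(E\, ;\, \Omega) = \per(E\, ;\, \Omega \cap \Br) + \per(E\, ;\, \Omega \setminus \overline{\Br}),$$
while the reduced boundary of $E \setminus \Br$ splits into the portion of $\partial^* E$ lying outside $\overline{\Br}$ and the portion of $\partial \Br$ lying in the density-one set $E^{(1)}$, giving
$$\per(E \setminus \Br\, ;\, \Omega) = \per(E\, ;\, \Omega \setminus \overline{\Br}) + \per(\Br\, ;\, E^{(1)}),$$
and symmetrically
$$\per(E \cap \Br\, ;\, \Omega) = \per(E\, ;\, \Omega \cap \Br) + \per(\Br\, ;\, E^{(1)}).$$
Subtracting the first two identities cancels the exterior perimeter $\per(E\, ;\, \Omega \setminus \overline{\Br})$ and turns the minimality inequality into $\per(E\, ;\, \Omega \cap \Br) - \per(\Br\, ;\, E^{(1)}) \ls \int_{E \cap \Br} \kappa$. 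Adding $2\,\per(\Br\, ;\, E^{(1)})$ to both sides and substituting the third identity produces exactly \eqref{eq:compareGen}.

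The only genuinely delicate point, and the one I expect to require the most care, is the justification of these three slicing identities for almost every $r$. They rest on the De Giorgi structure theorem together with the coarea formula applied to $r \mapsto |E \cap \Br|$, whose derivative equals $\mathcal{H}^1(\partial \Br \cap E^{(1)}) = \per(\Br\, ;\, E^{(1)})$ for a.e. $r$; the bad radii are precisely those for which $\partial^* E$ meets $\partial \Br$ in positive $\mathcal{H}^1$-measure, and these form an $\mathcal{H}^1$-negligible set of radii because $\partial^* E$ is rectifiable. This is exactly the bookkeeping carried out in the standard treatment of density estimates for $\Lambda$-minimizers of perimeter (see \cite{Mag12}), and once these identities are granted the remainder of the argument is elementary algebra, which is why the conclusion is stated only for almost every $r$.
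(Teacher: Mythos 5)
Your proof is correct and takes essentially the same route as the paper's: both test minimality against the competitor $E \setminus \Br$, rewrite the resulting perimeter differences via the decomposition formulas \eqref{eq:1610}--\eqref{eq:1611}, and discard the tangential-contact terms $\mathcal H^1(\{\nu_E = \pm\nu_{\Br}\})$, which vanish for a.e.\ $r$ because $\mathcal H^1 \llcorner \partial^* E$ is a finite measure and the circles $\partial \Br$ are pairwise disjoint (the paper's \eqref{eq:zeroae}; note this is a finiteness argument rather than one about rectifiability as you suggest). The only difference is bookkeeping: you use three exact identities (additivity of $\per(E\,;\,\cdot)$ across $\partial\Br$ plus the decompositions of $\per(E\setminus\Br\,;\,\Omega)$ and $\per(E\cap\Br\,;\,\Omega)$), whereas the paper reaches the same cancellation through the submodularity inequality \eqref{eq:perdiff1} together with Federer's theorem (Theorem \ref{th:federer}) to split $\per(\Br\cap\Omega)$.
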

\begin{remark}
One can note that 
\begin{equation}\label{eq:perH1}\per(\Br\, ;\, E^{(1)}) = \mathcal H^1(\partial \Br \cap E) \text{ for almost every $r$,}\end{equation}
for $\mathcal H^1$ the $1$-dimensional Hausdorff measure. In fact, \eqref{eq:compareGen} can be proved for all $r$ by keeping track of extra terms in  \eqref{eq:perdiff1} and \eqref{eq:perdiff2} that appear when the sets $E$ and $\Br$ have tangential contact.
\end{remark}

\begin{proof}We use the following inequality, valid for every finite perimeter sets $F, G \subset \Omega$,
\begin{equation}\label{eq:perdiff1}
\per(F \setminus G ) + \per(G \setminus F) \ls \per(F) + \per(G),
\end{equation}
which can be proved by using \eqref{eq:1611} twice. We will also apply the following equality, which holds for almost every $r > 0$
\begin{equation}\label{eq:perdiff2}
\begin{aligned}
&\per((\Br \cap \Omega) \setminus E)  \\
&\quad = \per(\Br \cap \Omega \, ;\, E^{(0)} \cap \Omega) + \per(E \, ;\, \Br \cap \Omega) \\
&\quad = \per(\Br \cap \Omega \, ;\, E^{(0)} \cap \Omega) + \per(E\cap \Br \, ;\, \Omega) - \per(\Br \cap \Omega \, ;\, E^{(1)} \cap \Omega).\\
\end{aligned}
\end{equation}
This can be deduced using \eqref{eq:1611} in the first equality and \eqref{eq:1610} in the second one and the fact that the set of tangential contact 
$$\{ \nu_E = \pm \nu_{\Br}\}$$
is contained in the set
$$ \partial (\Br \cap \Omega) \cap \partial^* E$$ and that for almost every $r$,
\begin{equation}\mathcal{H}^1\left[  \left(\partial (\Br \cap \Omega) \cap \partial^* E \right) \cap \Omega \right]=0,\label{eq:zeroae} \end{equation}  
since $\mathcal{H}^{1}(\partial^* E) < \infty$, where $\partial^* E$ is the reduced boundary of $E$ (see Appendix).

Using the minimality of $E$, then the two formulas above, and the additivity of perimeter, we get
\begin{equation*}\begin{aligned}
\per(E)&-\int_E \kappa \ls \per(E \setminus \Br ) - \int_{E \setminus \Br} \kappa \\
&\ls \per(E) + \per(\Br \cap \Omega) - \per((\Br \cap \Omega) \setminus E) - \int_{E \setminus \Br} \kappa \\
&=\per(E) + \per(\Br \cap \Omega) - \per(\Br \cap E) - \per(\Br \cap \Omega \, ;\, E^{(0)} \cap  \Omega) \\
&\quad+ \per(\Br \cap \Omega \, ;\, E^{(1)} \cap \Omega) - \int_{E \setminus \Br} \kappa \\
& = \per(E) -  \per(\Br \cap E) + 2 \per(\Br\, ;\, E^{(1)}\cap \Omega) - \int_{E \setminus \Br} \kappa ,
\end{aligned}\end{equation*}
where in the last equality we use Theorem \ref{th:federer} and again \eqref{eq:zeroae} to see that for a.e. $r$,
$$\per(\Br \cap \Omega) = \per(\Br \cap \Omega \, ;\, E^{(1)} \cap \Omega) + \per(\Br \cap \Omega \, ;\, E^{(0)} \cap \Omega ). $$
 Since $E^{(1)} \subset \Omega$, the inequality above is the statement of \eqref{eq:compareGen}.
\end{proof}

\subsection{The \texorpdfstring{$\R^2$}{R2} case.}
\label{sec:densr2}
Here, $\Omega = \R^2$ and the proof is then the one presented in \cite{ChaDuvPeyPoo17} up to making more explicit the constants involved. Let us consider a level-set $U_{\alpha,w}^{(t)}$ (we assume without loss of generality that $t >0$) of $u_{\alpha,w}$ that therefore minimizes
$$F \mapsto \per(F) - \int_F v_{\alpha,w},$$
and $x \in \partial U_{\alpha,w}^{(t)}$.
Thanks to the equiintegrability of $v_{\alpha}$ (which, as noted before, follows from the strong convergence in $\L^2$ showed in Proposition \ref{prop:conv}), for every $\epsilon >0$ and $|F| \ls \pi r_0^2$ with $r_0$ small enough (independent of $\alpha$ but dependent of $\epsilon$) one has

\begin{equation} \label{eq:equiint} \left(\int_F |v_{\alpha}|^2\right)^{1/2} \ls \epsilon.\end{equation}

Then, \eqref{eq:paramChoiceDirichlet}, \eqref{eq:closetonoiseless} and the above imply that for $r \ls r_0$,
\begin{align*}\left \vert \int_{U_{\alpha,w}^{(t)} \cap \Br} v_{\alpha,w} \right \vert & \ls |U_{\alpha,w}^{(t)} \cap \Br|^{1/2} \Vert v_{\alpha,w} \Vert_{\L^2(\Br)} & \\
&\ls |U_{\alpha,w}^{(t)} \cap \Br|^{1/2} \left( \Vert v_{\alpha} \Vert_{\L^2(\Br)} + \eta \right) \\
& \ls |U_{\alpha,w}^{(t)} \cap \Br|^{1/2} (\epsilon + \eta).
\end{align*}
Using the above in \eqref{eq:compareGen} (with $\kappa = v_{\alpha,w}$), we obtain
$$\per(U_{\alpha,w}^{(t)} \cap \Br) - |U_{\alpha,w}^{(t)} \cap \Br|^{1/2} (\epsilon + \eta) \ls 2 \per(\Br \, ; \, {U_{\alpha,w}^{(t)}}^{(1)}),$$
which combined with the isoperimetric inequality in $\R^2$ and finally \eqref{eq:perH1} yields
\begin{equation} |U_{\alpha,w}^{(t)} \cap \Br|^{1/2} (2\sqrt{\pi} - \epsilon - \eta) \ls 2 \mathcal H^1(U_{\alpha,w}^{(t)} \cap \partial \Br). \label{eq:diffeq} \end{equation}

Now, denoting by
$$ g(r) := |U_{\alpha,w}^{(t)} \cap \Br|,$$
the coarea formula \eqref{eq:coarea} for the distance to $x$ implies for a.e. $r$, $g'(r) = \mathcal H^1(U_{\alpha,w}^{(t)} \cap \partial \Br).$
As a result, \eqref{eq:diffeq} reads 
\begin{equation*}\label{eq:diffeqg}(2\sqrt{\pi} - \epsilon - \eta) \sqrt{g} \ls 2 g'.\end{equation*} 
Now, if $\eta$ and $\epsilon$ are chosen such that $\epsilon + \eta < 2 \sqrt{\pi}$, one can integrate on both sides and use $g(0)= 0$ to get $(2\sqrt{\pi} - \epsilon - \eta)r \ls 4 \sqrt{g(r)}$, which reads
$$\frac{|B(x,r) \cap U_{\alpha,w}^{(t)}|}{|B(x,r)|} \gs \frac{(2\sqrt{\pi} - \epsilon - \eta)^2 r^2}{16 \pi r^2}  = \frac{(2\sqrt{\pi} - \epsilon - \eta)^2}{16\pi},$$
where the right hand side is uniform in $\alpha$. Since $\epsilon$ was arbitrary and the parameter choice \eqref{eq:paramChoiceDirichlet} implies $\eta < 2 \sqrt{\pi}$, we obtain \eqref{eq:densityest}.

\subsection{The Dirichlet case}
In this subsection, we consider the case of Dirichlet conditions in a bounded domain, and see that it can be treated through a variational problem  formulated in $\R^2$:

\begin{lemma}Assume that $\Omega$ admits a variational curvature $\kappa_\Omega$ such that $\kappa_\Omega \gs g$ with $g \in \L^2(\R^2)$, and let $E \subset \Omega$ satisfy \eqref{eq:curvls} (we assume that $t >0$). Then, $E$ satisfies the following variational problem among sets $F\subset \R^2$ such that $|F \Delta E|$ is bounded:
\begin{equation}\label{eq:globalcurv}\per(E) - \int_E \kaw \ls \per(F) - \int_F \kaw,\quad
\text{where }\kaw=v_{\alpha, w}1_\Omega + g 1_{\R^2 \setminus \Omega}.\end{equation} 
\end{lemma}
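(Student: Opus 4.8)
The plan is to test the two minimality properties at our disposal against the ``two halves'' of an arbitrary competitor. Given $F\subset\R^2$, the set $F\cap\Omega$ lies inside $\Omega$ and is therefore an admissible competitor for the Dirichlet problem \eqref{eq:curvls} solved by $E$ (recall $t>0$, so $\operatorname{sgn}(t)=1$ and $E$ minimizes $\per(\cdot)-\int_{(\cdot)}v_{\alpha,w}$ among subsets of $\Omega$), while the set $F\cup\Omega$ is an outer modification of $\Omega$ and is therefore an admissible competitor for the minimization defining the variational curvature $\kappa_\Omega$. The bridge between $\per(F\cap\Omega)$, $\per(F\cup\Omega)$ and $\per(F)$, $\per(\Omega)$ is the submodularity of the perimeter.

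Concretely, since $E\subset\Omega$ we have $\int_E\kaw=\int_E v_{\alpha,w}$, so \eqref{eq:globalcurv} is equivalent to $\per(E)-\int_E v_{\alpha,w}\ls \per(F)-\int_{F\cap\Omega}v_{\alpha,w}-\int_{F\setminus\Omega}g$. Applying the Dirichlet minimality \eqref{eq:curvls} with competitor $F\cap\Omega$ gives $\per(E)-\int_E v_{\alpha,w}\ls \per(F\cap\Omega)-\int_{F\cap\Omega}v_{\alpha,w}$, so it suffices to prove the purely geometric estimate $\per(F\cap\Omega)\ls\per(F)-\int_{F\setminus\Omega}g$. For this I would combine the submodularity inequality $\per(F\cap\Omega)+\per(F\cup\Omega)\ls\per(F)+\per(\Omega)$ (proved analogously to \eqref{eq:perdiff1}) with the minimality of $\Omega$ for its variational curvature, tested against $F\cup\Omega$: since $(F\cup\Omega)\setminus\Omega=F\setminus\Omega$ this reads $\per(\Omega)-\per(F\cup\Omega)\ls-\int_{F\setminus\Omega}\kappa_\Omega$. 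Adding the two displays yields $\per(F\cap\Omega)\ls\per(F)-\int_{F\setminus\Omega}\kappa_\Omega$, and finally the hypothesis $\kappa_\Omega\gs g$ on $\R^2\setminus\Omega\supset F\setminus\Omega$ gives $-\int_{F\setminus\Omega}\kappa_\Omega\ls-\int_{F\setminus\Omega}g$, which closes the argument.

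The main obstacle is not this chain of inequalities but the admissibility of $F\cup\Omega$ as a competitor: the definition of variational curvature only allows perturbations $G$ with $G\Delta\Omega$ \emph{compactly supported}, whereas the hypothesis on $F$ merely provides $|F\Delta E|$, and hence $|F\setminus\Omega|$, of \emph{bounded measure}. I would remove this gap by truncation. We may assume $\per(F)<\infty$, and since $E\subset B(0,R_0)$ (by Lemma \ref{le:compact}) and $|F|\ls|E|+|F\Delta E|<\infty$, the coarea formula $\int_0^{\infty}\mathcal H^1(F\cap\partial B(0,R))\dd R=|F|$ lets me select radii $R_k\to\infty$ with $\mathcal H^1(F^{(1)}\cap\partial B(0,R_k))\to 0$. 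For $F_k:=F\cap B(0,R_k)$ the symmetric differences $(F_k\cap\Omega)\,\Delta\,E$ and $F_k\setminus\Omega$ are compactly supported, so the argument above applies verbatim to each $F_k$, giving $\per(E)-\int_E\kaw\ls\per(F_k)-\int_{F_k}\kaw$. It then remains to let $k\to\infty$: one has $\int_{F_k}\kaw\to\int_F\kaw$ by dominated convergence (note $\kaw\in\L^1(F)$ since $v_{\alpha,w}\in\Ltwou$, $g\in\L^2(\R^2\setminus\Omega)$ and $|F|<\infty$), while $\limsup_k\per(F_k)\ls\per(F)$ because $\per(F\cap B(0,R_k))\ls\per(F;\overline{B(0,R_k)})+\mathcal H^1(F^{(1)}\cap\partial B(0,R_k))$ for a.e.\ $R_k$, the first term increasing to $\per(F)$ and the second tending to $0$.
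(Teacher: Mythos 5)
Your argument is, in its core, exactly the paper's proof: compare $E$ with $F\cap\Omega$ in \eqref{eq:curvls}, use submodularity of the perimeter (proved as in \eqref{eq:perdiff1}), test the variational curvature of $\Omega$ against the outer competitor $F\cup\Omega$, and conclude with $\kappa_\Omega\gs g$ on $F\setminus\Omega$; the chain of inequalities is identical, only grouped in a slightly different order. Where you go beyond the paper is the truncation step, and this is a genuine (if small) improvement in rigor: the paper tests the minimality of $\Omega$ directly against $F\cup\Omega$, even though its definition of variational curvature only admits competitors $G$ with $G\,\Delta\,\Omega$ \emph{compactly supported}, whereas the lemma's hypothesis only gives $|F\Delta E|<\infty$, so $F\setminus\Omega$ can be unbounded as a set. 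Your replacement of $F$ by $F_k=F\cap B(0,R_k)$ along radii with $\mathcal H^1(F^{(1)}\cap\partial B(0,R_k))\to 0$, followed by $\int_{F_k}\kaw\to\int_F\kaw$ and $\limsup_k\per(F_k)\ls\per(F)$, closes that gap correctly. The paper can afford to leave this implicit because in the only place the lemma is used---the density estimates---the competitors are $E\cup B(x,r)$ and $E\setminus B(x,r)$, whose symmetric difference with $E$ is automatically compactly supported; but for the lemma as literally stated, your extra step is the right thing to do.
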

\begin{proof}Similarly to \cite[Lemma, p.~132]{BarMas82}, we consider the constraint as an obstacle and we write (comparing $E$ and $F \cap \Omega$ in \eqref{eq:globalcurv})
\begin{equation}\label{eq:inside}\begin{aligned}\per(E) - \int_E \kaw &= \per(E) - \int_E v_{\alpha, w} \ls \per(F \cap \Omega) - \int_{F \cap \Omega} v_{\alpha, w} \\ 
&\ls \per(F)+\per(\Omega)-\per(F \cup \Omega) - \int_{F \cap \Omega} v_{\alpha, w} \\
&= \per(F)+\per(\Omega)-\per(F \cup \Omega) - \int_{F \cap \Omega} \kaw.\end{aligned}\end{equation}
On the other hand, the variational curvature of $\Omega$ implies
\begin{equation*}\label{eq:outside}\begin{aligned}\per(\Omega) - \int_\Omega \kappa_\Omega &\ls \per(F \cup \Omega) - \int_{F \cup \Omega} \kappa_\Omega,\end{aligned}\end{equation*}
which we can use in \eqref{eq:inside} along with the definition of $\kappa_{\alpha,w}$ to get
\begin{equation*}\begin{aligned}
\per(E) - \int_E \kaw &\ls \per(F) - \int_{F \cap \Omega} \kaw - \int_{F \setminus \Omega} \kappa_\Omega \\
&\ls \per(F) - \int_{F \cap \Omega} \kaw - \int_{F \setminus \Omega} g \\
&= \per(F) - \int_{F \cap \Omega} \kaw - \int_{F \setminus \Omega} \kaw \\
&= \per(F) - \int_F \kaw.
\end{aligned}
\end{equation*}
\end{proof}
Now, for $E = U_{\alpha,w}^{(t)}$ a level-set of $u_{\alpha,w}$, $x \in \partial U_{\alpha,w}^{(t)}$, we may perturb $U_{\alpha,w}^{(t)}$ with balls $B(x,r)$ not necessarily contained in $\Omega$. Using the definition of the $(\kappa_{\alpha,w})$ we get that $(\kappa_{\alpha,0})$ is equiintegrable, and using also \eqref{eq:closetonoiseless}, that $\|\kappa_{\alpha,w}-\kappa_{\alpha,0}\| \ls \eta$. Therefore, we can apply the $\R^2$ density estimates of Section \ref{sec:densr2} to obtain \eqref{eq:densityest} for the Dirichlet boundary conditions.

\subsection{The Neumann case}
For $\per(\cdot \, ; \, \Omega)$, the usual isoperimetric inequality does not hold. However, since we have assumed that $\Omega$ is such that its boundary can be locally represented as the graph of a Lipschitz function (so it is in particular an extension domain, see \cite[Definition 3.20, Proposition 3.21]{AmbFusPal00}), we can use the following Poincar\'{e}-Sobolev inequality \cite[Remark 3.50]{AmbFusPal00} valid for $u \in \BV(\Omega)$:
\[\norm{u - \frac{1}{|\Omega|}\int_\Omega u}_\Ltwou \ls C(\Omega)\,\TV{u\, ;\, \Omega}.\]

With $u = 1_F$ the characteristic function of some $F \subset \Omega$, the left hand side reads
$$ \int_{\Omega} \left \vert 1_F - \frac{|F|}{|\Omega|} \right \vert^2 = |F| \left( \frac{|\Omega \setminus F|}{|\Omega|} \right)^2 + |\Omega \setminus F| \left( \frac{|F|}{|\Omega|} \right)^2$$
and the inequality reads
\begin{equation}  C(\Omega) \per(F) \gs \left( \frac{|F| \ |\Omega \setminus F|}{|\Omega|^2} \right)^{1/2} \left( |\Omega \setminus F| + |F| \right)^{1/2} \gs \left( \frac{|F| \ |\Omega \setminus F|}{|\Omega|} \right)^{1/2}. \label{eq:PoincareSet} \end{equation}

As before, let $U_{\alpha,w}^{(t)}$ be a level-set of $u_{\alpha,w}$ (that satisfies \eqref{eq:curvls}). Applying \eqref{eq:PoincareSet} to $U_{\alpha,w}^{(t)} \cap \Br$, we get
\begin{equation}\label{eq:isopNeum}\begin{aligned}|U_{\alpha,w}^{(t)} \cap \Br|^{1/2} & \ls C(\Omega) \left( \frac{|\Omega|}{|\Omega \setminus (U_{\alpha,w}^{(t)} \cap \Br)|}  \right)^{1/2} \per( U_{\alpha,w}^{(t)} \cap \Br) \\
& \ls  C(\Omega) \left( \frac{|\Omega|}{|\Omega \setminus \Br|}  \right)^{1/2} \per( U_{\alpha,w}^{(t)} \cap \Br).
\end{aligned}\end{equation}
Now, the parameter choice \eqref{eq:paramChoiceNeumann} implies that one can choose $r_0$ independent of $x$ such that for every $r\ls r_0$, 
$$ \eta <  \frac{1}{C(\Omega)} \left( \frac{|\Omega| -|B(r)|}{|\Omega|}  \right)^{1/2} $$
and such that \eqref{eq:equiint} holds for some $\epsilon$ that satisfies 
$$ \frac{1}{C(\Omega)} \left( \frac{|\Omega| -|B(r_0)|}{|\Omega|}  \right)^{1/2} - \epsilon - \eta > 0.$$

We can then use \eqref{eq:isopNeum} instead of the isoperimetric inequality in \eqref{eq:compareGen} (which holds since $U_{\alpha,w}^{(t)}$ satisfies \eqref{eq:curvls}) and perform the same proof as in Section \ref{sec:densr2} to get the estimate
$$\frac{|B(x,r) \cap U_{\alpha,w}^{(t)}|}{|B(x,r)|} \gs \frac{\left( \frac{1}{C(\Omega)} \left( \frac{|\Omega| -|B(r_0)|}{|\Omega|}  \right)^{1/2} - \epsilon - \eta \right)^2 }{16 \pi },$$
where the right hand side is uniform in $r$ and $x$. This is the first estimate of \eqref{eq:densityest}. In this case, the second estimate of \eqref{eq:densityest} reads
$$\frac{|(B(x,r) \cap \Omega)\setminus U_{\alpha,w}^{(t)}|}{|B(x,r)|} \gs C,$$
which is still enough for the Hausdorff convergence of $\partial U_{\alpha,w}^{(t)}$ for the proof of \eqref{eq:mainth2}.

\section{Examples and discussion}\label{sec:examples}
First, we consider two particular applications. In the first, we check that Theorem \ref{th:lsconv} applies to the inversion of the circular Radon transform, which is used as a model for image formation in synthetic aperture radar \cite{HelAnd87}. In such an application, object detection is often important, and Hausdorff convergence of level-sets corresponds to a kind of uniform convergence of the detected objects. In the second, we numerically confirm the convergence of level sets in the deblurring of a characteristic function, where the theorem applies and the convergence has visual meaning.

We then conclude with some remarks about the differences in the solutions when imposing different boundary conditions, and an accompanying numerical example.

\subsection{The circular Radon transform}
We review from \cite{SchGraGroHalLen09} the problem of inverting the \emph{circular Radon transform} 
\begin{equation}\label{eq:ip:rc0}
    \Rcirc u = v
\end{equation}
in a stable way, where
\begin{equation}\label{eq:ip:rc}
\begin{aligned}
\Rcirc: \L^2(\R^2)  &\to 
        \L^2 \bigl( \Sigma = \mathbb{S}^1 \times (0, 2)\bigr)\,,\\
     u & \mapsto (\Rcirc u) ( \vec{z}, t)
     :=
     t
     \int_{\mathbb{S}^1}
     u(\vec{z}  + t{\vec{\omega}} ) \, d{\cal H}^1({\vec{\omega}})
     \,.
\end{aligned}
\end{equation}
In the following let $\Omega := B(0,1)$ be an open Ball of Radius $1$ with center $0$ in $\R^2$ and 
let $\eps \in (0,1)$. We are considering the spherical Radon transform defined on the 
subspace of functions supported in $\overline{B(0,1-\eps)}$, that is on 
\begin{equation*}
 \L^2(B(0,1-\eps)):= \set{u \in \L^2(\R^2) \ \middle\vert\ \supp(u) \subseteq \overline{B(0,1-\eps)}}.
\end{equation*}

Some properties \cite[Propositions 3.80 and 3.81]{SchGraGroHalLen09} of the circular Radon transform are:
\begin{itemize}
 \item The circular Radon transform, as defined in \eqref{eq:ip:rc},
is well-defined, bounded, and satisfies $\norm{\Rcirc} \leq 2\pi$.
 \item There exists a constant $C_\eps>0$, such that
\begin{equation*}\label{eq:ip:sobolev:help1}
C_\eps^{-1} \norm{\Rcirc u}_2 \leq \norm{i^* (u)}_{1/2,2} \leq C_\eps
\norm{\Rcirc u}_{2} \,, \quad u \in \L^2(B(0,1-\eps)) \,,
\end{equation*}
where $i^*$ is the adjoint of the embedding $i : \W^{1/2,2}(B(0,1)) \to \L^2(B(0,1))$ of the standard 
Sobolev space of differentiation of order $1/2$ on $\Omega$. 
\item For every  $\eps \in (0,1)$ we have

\begin{equation*}
\W^{1/2,2}(B(0,1-\eps)) \subset \range{\Rcircadj} \cap \L^2(B(0,1-\eps)) \,,
\end{equation*}
where 
\begin{multline*}
\W^{1/2,2}(B(0,1-\eps)) \\
:= \set{ u \in \L^2(\R^2) \ \middle\vert\ \supp{u} \subset \overline{B(0,1-\eps)} \text{ and } u|_{B(0,1)} \in \W^{1/2,2}(B(0,1))}.
\end{multline*}
Note that $\W^{1/2,2}$ is not the standard definition of a Sobolev space because we associate with each function 
of the space $\W^{1/2,2}(B(0,1-\eps))$ an extension to $\R^2$ by $0$ outside.
We could also say, in the terminology of this paper, that these functions satisfy zero Dirichlet boundary condition on $B(0,1-\eps)$.

\end{itemize}
It was shown in \cite[Propositions 3.82 and 3.83]{SchGraGroHalLen09} that minimization of the functional \eqref{eq:primalDirichlet} with $A=\Rcirc$:
\begin{itemize}
 \item is well-posed, stable, and convergent.
 \item Moreover, the following result holds: Let $\eps \in (0,1)$ and $\udag$ be the solution of \eqref{eq:ip:rc0}. 
       Then we have the following convergence rates result for TV-regularization: 
       If $\xi \in \partial \TV{\udag} \cap \W^{1/2,2}(B(0,1-\eps))$, then
       \begin{equation*}\label{eq:ip:rate-tv-radon}
        \TV{u_{\alpha(\delta)}^\delta} - \TV{\udag} - \scal{\xi}{u_{\alpha(\delta)}^\delta - \udag} 
        =
        \mathcal{O}(\delta)
        \qquad \text{ for } \alpha(\delta) \sim \delta .
        \end{equation*}
        In the last equation, the left hand side is called \emph{Bregman distance} of $TV$ at $\udag$ and $\xi$.
        
        With the results of this paper, if the parameter $\alpha$ is chosen finer, meaning satisfying \eqref{eq:paramChoiceDirichlet}, we not only 
        have convergence rates of the Bregman distance, but also convergence of the level-sets.
        
        There are particular examples for which the source condition is satisfied:
        \begin{itemize}
        \item Let $\rho \in C^{\infty}_0(\R^2)$ be an adequate mollifier and $\rho_\mu$ the scaled function of $\rho$.
              Moreover, let $x_0 = (0.2,0)$, $a = 0.1$, and $\mu = 0.3$.
              Then
              $$u^\dagger := 1_{B(x_0, a + \mu)} \ast \rho_{\mu}$$ 
              satisfies the source condition. 
        \item Let  $\udag := 1_F$ be the characteristic function of a bounded subset of $\R^2$ with smooth boundary. Then, the source condition is satisfied as well \cite[Example 3.74]{SchGraGroHalLen09}.
        \end{itemize}
\end{itemize}

\subsection{A numerical deblurring example}
The second situation we consider is a numerical deconvolution example, in which a characteristic function has been blurred with a Gaussian kernel and subsequently corrupted by additive Gaussian noise. Both the convolution kernel and the variance of the noise are assumed known, and Dirichlet boundary conditions on a rectangle are used. These choices lead directly to the minimization of \eqref{eq:primalDirichlet} and enable the use of a parameter choice according to \eqref{eq:paramChoiceDirichlet}, so that the the results of Section \ref{sec:cvls} provide convergence of level-lines.

The discretization of choice is the `upwind' finite difference scheme of \cite{ChaLevLuc11}, and the resulting discrete problem is solved with a primal-dual algorithm with the convolutions implemented through Fourier transforms as in \cite{ChaPoc11}. The boundary conditions were imposed by extending the computational domain and projection onto the corresponding constraint. The results and parameter choices are shown in Figure \ref{fig:tree}.
\begin{figure}[htb]
     \begin{center}
     \mbox{} 
     \hfill
	 \raisebox{-0.5\height}{\includegraphics[width=0.23\textwidth]{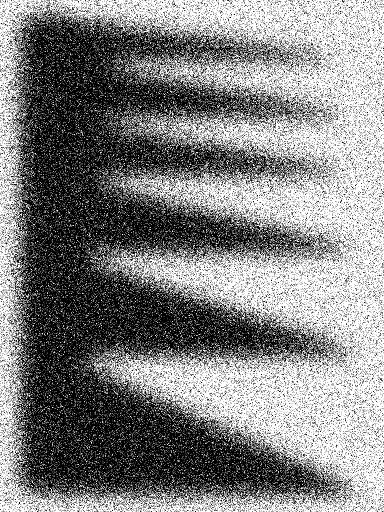}}
     \hfill
	 \raisebox{-0.5\height}{\includegraphics[width=0.23\textwidth]{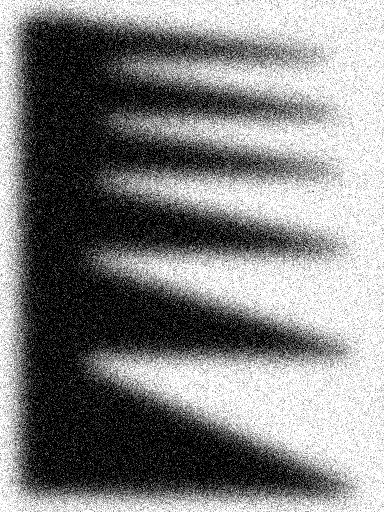}}
     \hfill
	 \raisebox{-0.5\height}{\includegraphics[width=0.23\textwidth]{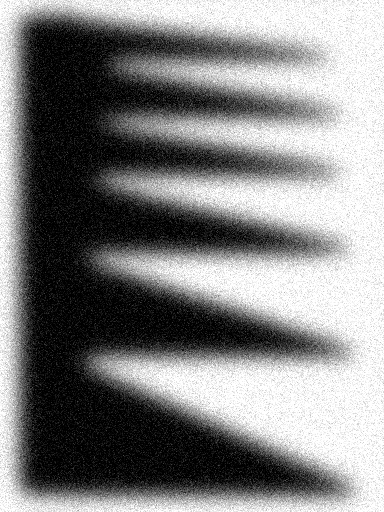}}
	 \hfill
	 \raisebox{-0.5\height}{\includegraphics[width=0.23\textwidth]{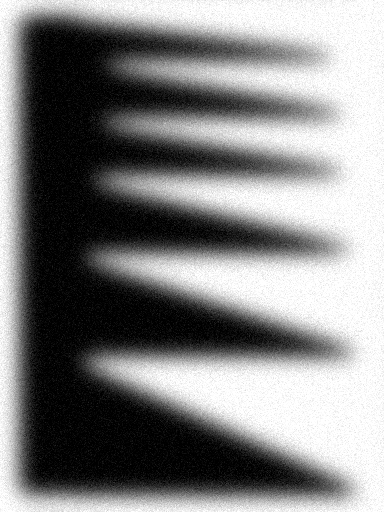}}
	 \hfill
	 \mbox{}
	 \\
	 \vspace{.25cm}
	 \mbox{} 
     \hfill
	 \raisebox{-0.5\height}{\includegraphics[width=0.23\textwidth]{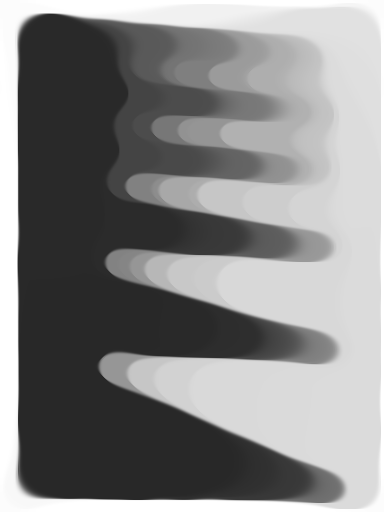}}
     \hfill
	 \raisebox{-0.5\height}{\includegraphics[width=0.23\textwidth]{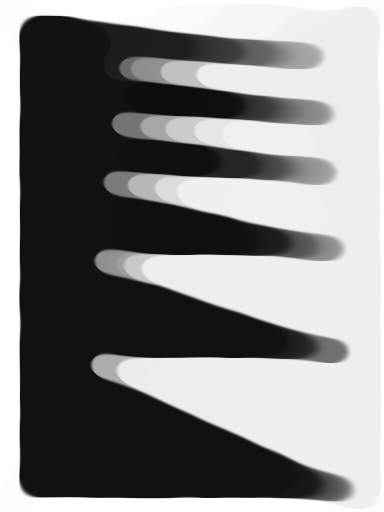}}
     \hfill
	 \raisebox{-0.5\height}{\includegraphics[width=0.23\textwidth]{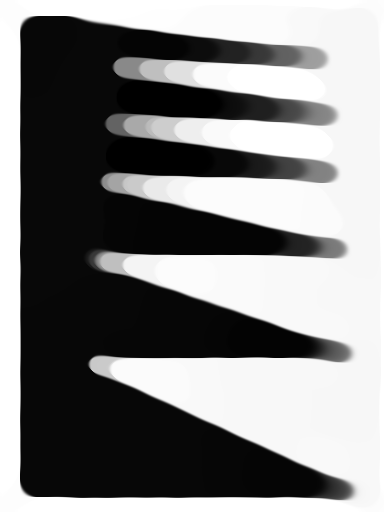}}
	 \hfill
	 \raisebox{-0.5\height}{\includegraphics[width=0.23\textwidth]{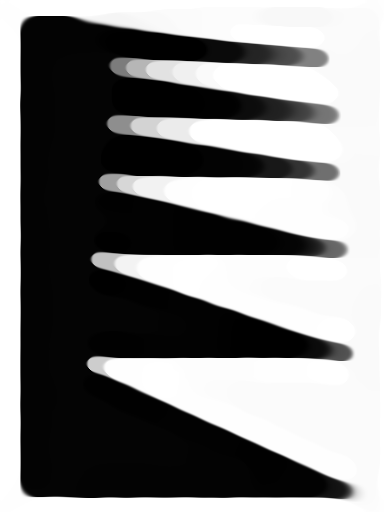}}
	 \hfill
	 \mbox{}
	 \\
	 \vspace{.25cm}
	 \mbox{} 
     \hfill
	 \raisebox{-0.5\height}{\includegraphics[width=0.23\textwidth]{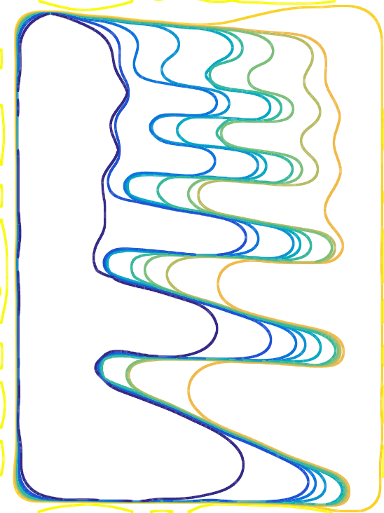}}
     \hfill
	 \raisebox{-0.5\height}{\includegraphics[width=0.23\textwidth]{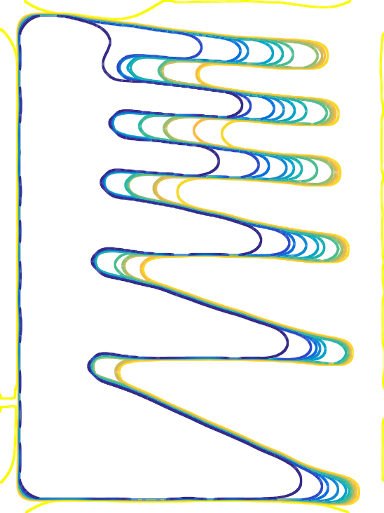}}
     \hfill
	 \raisebox{-0.5\height}{\includegraphics[width=0.23\textwidth]{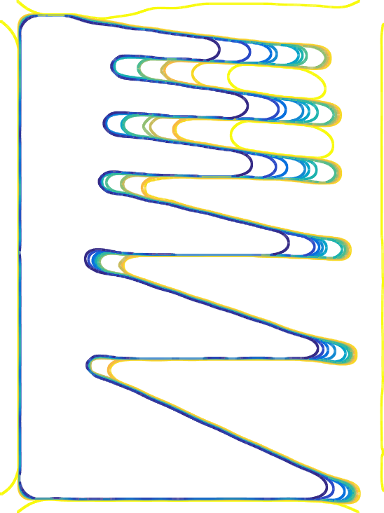}}
	 \hfill
	 \raisebox{-0.5\height}{\includegraphics[width=0.23\textwidth]{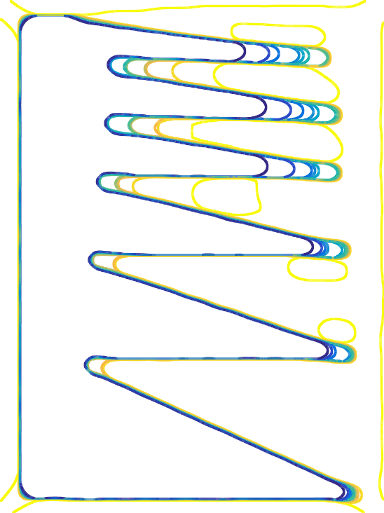}}
	 \hfill
	 \mbox{}
     \end{center}
     \caption{Deblurring of a characteristic function by total variation regularization with Dirichlet boundary conditions. First row: Input image blurred with a known kernel and with additive noise. Second row: numerical deconvolution result, corresponding to minimizers of \eqref{eq:primalDirichlet}. Third row: some level lines of the results. The regularization parameters are $\alpha = 1, 0.25, 0.0625, 0.0156$ and the variance of the Gaussian noise used is $\alpha/10$.}\label{fig:tree}
\end{figure}

\subsection{Denoising in \texorpdfstring{$\R^2$}{R2} or in \texorpdfstring{$\Omega$}{Omega} with Dirichlet conditions}
In this subsection, we consider only denoising ($A = \Id$). If $\udag = f$ has a bounded support in $\R^2$, one can minimize \eqref{eq:primalDirichlet} either in $\R^2$ or in a bounded domain $\Omega$ containing the support of $f$. In general, these minimizations yields different results. Nevertheless, when $\Omega$ is convex, we can easily show
\begin{prop}\label{prop:R2Dir}
Let $f$ have compact support included in an open convex set $\Omega$. Then, minimizing \eqref{eq:primalDirichlet} on $\Omega$ with Dirichlet homogeneous boundary conditions or $\R^2$ lead to the same solution.
\end{prop}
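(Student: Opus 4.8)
\emph{Reduction to a support statement.} The plan is to reduce the claim to showing that the (unique) minimizer $\ua$ of $\Fd$ over $\Ltwo$ is automatically supported in $\overline\Omega$. Indeed, the admissible class of the Dirichlet problem, $\set{u\in\Ltwo : u\equiv 0 \text{ on } \R^2\setminus\Omega}$, is a subset of $\Ltwo$, and since $A=\Id$ and $\supp f\subset\Omega$ the two functionals coincide on it. Thus, if the global minimizer $\ua$ over $\Ltwo$ lies in this subclass, it is a fortiori a minimizer of the Dirichlet problem; as $\Id$ is injective, Proposition \ref{prop:existmin} guarantees uniqueness and the two minimizers must agree. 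So everything reduces to the support property of $\ua$.

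\emph{A competitor obtained by restriction.} To prove the support property I would test the minimality of $\ua$ against the competitor $\tilde u:=\ua 1_\Omega$, that is, the restriction of $\ua$ to $\Omega$ extended by zero, and show $\Fd(\tilde u)\ls\Fd(\ua)$ with strict inequality unless $\ua$ already vanishes almost everywhere outside $\Omega$. For the data term this is immediate: since $f\equiv 0$ on $\R^2\setminus\Omega$, one has $\norm{\tilde u-f}_\Ltwo^2=\norm{\ua-f}_\Ltwo^2-\norm{\ua}_{\L^2(\R^2\setminus\Omega)}^2$, a strict decrease as soon as $\ua\not\equiv 0$ outside $\Omega$. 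The delicate part is to check that the restriction does not increase the regularizer, namely $\TV{\tilde u}\ls\TV{\ua}$.

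\emph{Controlling the total variation via the coarea formula.} For the total variation I would pass to level sets through the coarea formula, $\TV{u}=\int_{\R}\per(\{u>t\})\dd t$. For $t>0$ one has $\{\tilde u>t\}=\{\ua>t\}\cap\Omega$, while for $t<0$ a complementation gives $\per(\{\tilde u>t\})=\per(\{\ua\ls t\}\cap\Omega)$; in both regimes the relevant superlevel (respectively sublevel) set of $\ua$ has finite measure because $\ua\in\Ltwo$. The whole argument then hinges on the geometric fact that intersecting a finite-measure set of finite perimeter with the convex set $\Omega$ does not increase perimeter, $\per(E\cap\Omega)\ls\per(E)$. Granting this, integrating the level-set inequalities in $t$ yields $\TV{\tilde u}\ls\TV{\ua}$, and combined with the strict decrease of the data term and the minimality of $\ua$ we conclude that $\ua\equiv 0$ outside $\Omega$, as desired.

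\emph{The main obstacle.} The crux is the perimeter-monotonicity lemma. I would prove it first for a half-space $H$, where Federer's decomposition gives $\per(E\cap H)=\mathcal H^1(\partial^* E\cap H)+\mathcal H^1(E^{(1)}\cap\partial H)$, and the orthogonal ($1$-Lipschitz) projection onto $\partial H$ maps $\partial^* E$ onto a set covering $E^{(1)}\cap\partial H$ --- here finiteness of $\abs{E}$ is exactly what guarantees that almost every normal ray eventually leaves $E$. A general convex $\Omega$ is a countable intersection of half-spaces, and one passes to the limit using $\L^1$ convergence of the successive intersections together with lower semicontinuity of the perimeter; for bounded $\Omega$ one may instead argue directly from submodularity of the perimeter, $\per(E\cap\Omega)+\per(E\cup\Omega)\ls\per(E)+\per(\Omega)$, combined with $\per(\Omega)\ls\per(E\cup\Omega)$ (valid since the convex $\Omega$ sits inside the finite-measure set $E\cup\Omega$). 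Making this geometric step clean and uniform, including for unbounded $\Omega$, is the only genuine difficulty; the remaining bookkeeping is routine.
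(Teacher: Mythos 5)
Your proposal is correct and takes essentially the same route as the paper: the paper's (very terse) proof likewise reduces the claim to showing that the full-space minimizer is supported in $\Omega$, by comparing it with the competitor $u_\alpha 1_\Omega$, observing that the data term strictly decreases while convexity of $\Omega$ ensures the total variation does not increase. Your coarea argument and the perimeter-monotonicity lemma $\per(E\cap\Omega)\ls\per(E)$ for finite-measure $E$ (with the half-space projection and submodularity proofs) simply supply the details the paper compresses into the sentence ``for the total variation part, this result uses the convexity of $\Omega$.''
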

\begin{proof}
 We just need to show that the minimizer $u$ of \eqref{eq:primalDirichlet} in $\R^2$ has a support in $\Omega$. If it were not the case, just note that replacing $u$ by $u \cdot 1_\Omega$ decreases both terms of the functional. For the total variation part, this result uses the convexity of $\Omega$.
\end{proof}
If $\Omega$ is not convex, it is easy to construct examples where this result is no longer true, even for denoising. In Figure \ref{fig:denoiseC}, aggressive total variation denoising is applied to a noiseless image, to illustrate the role of the boundary conditions in the regularization. Nevertheless, the direct application of Theorem 2 show that as $\alpha \to 0$, the level-sets of these two minimizers concentrate around the ones of $f$.

\begin{figure}
\centering
\subfigure{
\hfill \includegraphics[width = 0.28\textwidth]{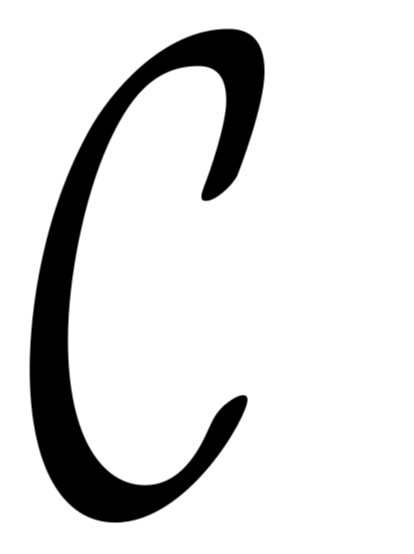} \hspace{0.1cm}
\includegraphics[width = 0.28\textwidth]{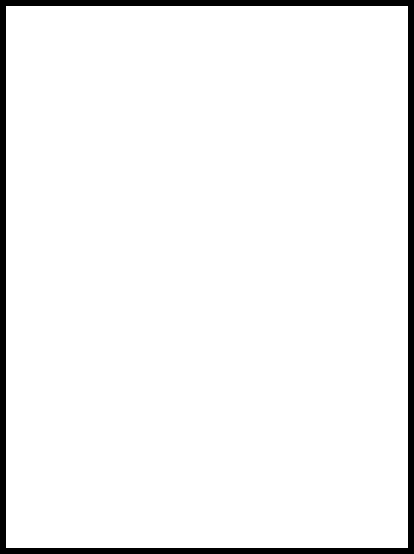} \hspace{0.1cm}
\includegraphics[width = 0.28\textwidth]{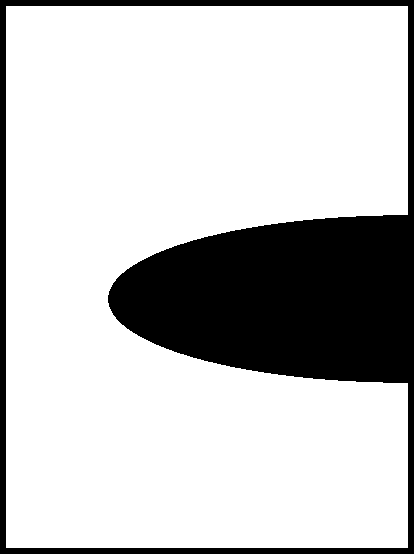} \hfill
}
\vspace{0.1cm}
\subfigure{
\hfill \includegraphics[width = 0.28\textwidth]{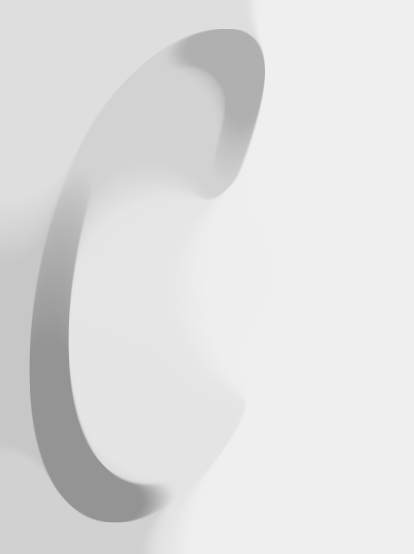} \hspace{0.1cm}
\includegraphics[width = 0.28\textwidth]{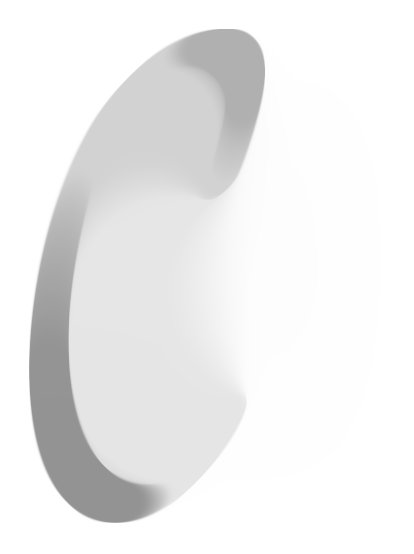} \hspace{0.1cm}
\includegraphics[width = 0.28\textwidth]{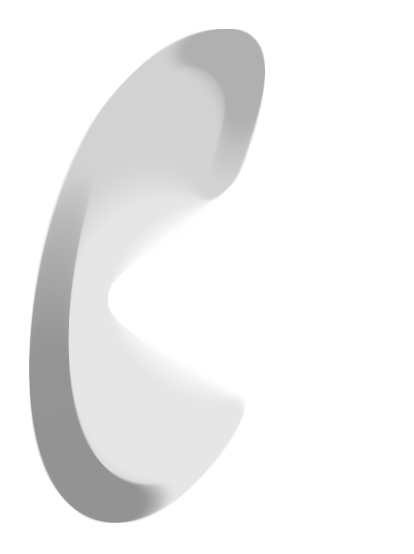} \hfill
}
\caption{Denoising of a $\mathcal C$ with different boundaries and boundary conditions. Upper left: original image. Upper middle: convex Dirichlet domain. Upper right: nonconvex domain. Lower left: result with Neumann boundary. Lower middle: Dirichlet result in the convex domain Lower right: Dirichlet result in the nonconvex domain. The Neumann solution reflects the fact that for level-sets that reach the boundary of $\Omega$, part of their perimeter is not penalized. For the rightmost solution, since the domain is not convex, the solution is different to that of the $\R^2$ case.}\label{fig:denoiseC}
\end{figure}

\subsection{Denoising with Neumann boundary conditions}

As in the Dirichlet case, there are some configurations where solving in a bounded domain does not correspond to solving for $\R^2$. For example, if $A = \Id$, $f = 1_{B(0,1)}$ and $\Omega = B(0,R)$ with $R>1$, the minimizer of \eqref{eq:primalNeumann} is
$$ u_\alpha = \left(1-\alpha-\frac{2\alpha}{R^2-1} \right) 1_{B(0,1)} + \frac{2\alpha}{R^2-1} 1_{B(0,R)},$$
whereas the minimizer of \eqref{eq:primalDirichlet} in $\R^2$ is clearly $1_{B(0,1)}$.

One can also see lower left image in Figure \ref{fig:denoiseC}, which contains the denoising of the $\mathcal C$ in a rectangle.

\section*{Acknowledgments}
The third author was supported by the Austrian Science Fund (FWF) through the National Research Network ``Geometry+Simulation'' (NFN S11704) and the project ``Interdisciplinary Coupled 
Physics Imaging'' (FWF P26687).

\begin{appendix}
\renewcommand{\theequation}{A-\arabic{equation}}
\section{Auxiliary results}
\begin{theorem}[Fenchel duality, \cite{BorZhu05} Theorem 4.4.3]\label{thm:fenchel}
Let $X$ and $Y$ be Banach spaces, $F:X \to \R \cup \{+\infty\}, G:Y \to \R \cup \{+\infty\}$ convex, and $A: X \to Y$ a linear bounded operator. Assume further that there exists a point $x \in X$ such that $F(x) < +\infty$, $G(Ax) < +\infty$ and $G$ is continuous at $Ax$. Then 
$$\inf_{x \in X} \left\{ F(x)+G(Ax)\right\} = \sup_{y^* \in Y^*} \left\{ -F^*(A^*y^*)-G^*(-y^*)\right\},$$
and the supremum is attained.
\end{theorem}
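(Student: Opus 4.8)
The plan is to prove the two directions of the identity separately, extracting the dual maximizer along the way. \emph{Weak duality} is immediate from the Fenchel--Young inequality: for any $x \in X$ and $y^\ast \in Y^\ast$ the definitions of the conjugates give $-F^\ast(A^\ast y^\ast) \ls F(x) - \langle A^\ast y^\ast, x\rangle$ and $-G^\ast(-y^\ast) \ls G(Ax) + \langle y^\ast, Ax\rangle$. Adding these and using the adjoint relation $\langle A^\ast y^\ast, x\rangle = \langle y^\ast, Ax\rangle$, the pairing terms cancel and we obtain $-F^\ast(A^\ast y^\ast) - G^\ast(-y^\ast) \ls F(x) + G(Ax)$. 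Taking the supremum over $y^\ast$ and the infimum over $x$ yields $\sup_{y^\ast}\{-F^\ast(A^\ast y^\ast)-G^\ast(-y^\ast)\} \ls \inf_{x}\{F(x)+G(Ax)\}$, so only the reverse inequality and attainment remain.

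For the hard direction I would introduce the \emph{value} (perturbation) function $h : Y \to \R \cup \{\pm\infty\}$, $h(z) := \inf_{x \in X}\{F(x) + G(Ax + z)\}$. Since $(x,z) \mapsto F(x) + G(Ax+z)$ is jointly convex, its infimal projection $h$ is convex, and $h(0)$ equals the primal value $p$. A direct computation of the conjugate, substituting $w = Ax + z$ and separating the two suprema, gives $h^\ast(y^\ast) = F^\ast(-A^\ast y^\ast) + G^\ast(y^\ast)$; hence after the sign change $y^\ast \mapsto -y^\ast$ the dual objective is exactly the biconjugate value $h^{\ast\ast}(0) = \sup_{y^\ast}\{-h^\ast(y^\ast)\}$. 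Thus the entire theorem reduces to showing $h^{\ast\ast}(0) = h(0)$ with the supremum attained, i.e. that $\partial h(0) \neq \emptyset$: any $y^\ast \in \partial h(0)$ satisfies $h^\ast(y^\ast) = -h(0)$, which simultaneously supplies the missing inequality $h^{\ast\ast}(0) \gs h(0)$ and a concrete dual maximizer.

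It remains to produce a subgradient of $h$ at $0$, and this is where the constraint qualification enters. Let $x_0$ be the point with $F(x_0) < +\infty$, $G(Ax_0) < +\infty$ at which $G$ is continuous. For $z$ in a neighborhood of $0$ one has $h(z) \ls F(x_0) + G(Ax_0 + z)$, and continuity of $G$ at $Ax_0$ bounds the right-hand side above; hence $h$ is convex, finite at $0$ (since $h(0) \ls F(x_0)+G(Ax_0) < +\infty$), and bounded above on a neighborhood of $0$. Assuming the interesting case that $p = h(0)$ is finite --- the degenerate case $p = -\infty$ forces the dual value to be $-\infty$ as well by weak duality --- a convex function that is finite at $0$ and bounded above near $0$ is continuous at $0$, and therefore subdifferentiable there. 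Concretely, local boundedness above makes the epigraph of $h$ have nonempty interior, while $(0, h(0))$ lies on its boundary, so a supporting hyperplane gives the desired element of $\partial h(0)$.

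The step I expect to be the main obstacle is precisely this last one: turning local boundedness of a convex function into the existence of a supporting continuous linear functional in an infinite-dimensional Banach space. Unlike the finite-dimensional setting, interiority of the effective domain is not automatic and cannot simply be invoked; the argument genuinely requires the geometric Hahn--Banach (separation) theorem applied to the epigraph, and one must check carefully both that the epigraph has nonempty interior --- which is exactly what boundedness above buys --- and that the separating functional can be normalized to be non-vertical, so that its $Y^\ast$-component is a true subgradient rather than merely a functional supporting the domain of $h$. Once this is in place, the subgradient relation $h^\ast(y^\ast) = -p$ closes the duality gap and yields attainment, completing the proof.
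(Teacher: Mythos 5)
Your proof is correct, but there is nothing in the paper to compare it against: the paper does not prove this theorem at all, it imports it verbatim as an auxiliary result from \cite{BorZhu05} (Theorem 4.4.3). What you give is the classical perturbation-function argument in the style of Ekeland--Temam \cite{EkeTem99} (the reference the paper actually uses for its duality framework in Theorem \ref{th:EkeTem}): weak duality from Fenchel--Young, the conjugate identity $h^\ast(y^\ast)=F^\ast(-A^\ast y^\ast)+G^\ast(y^\ast)$ for the value function $h(z)=\inf_{x}\{F(x)+G(Ax+z)\}$, reduction of the whole theorem to $\partial h(0)\neq\emptyset$, and production of that subgradient from the constraint qualification via local boundedness above of $h$ near $0$. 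All of these steps are sound; in particular, the observation that any $y^\ast\in\partial h(0)$ simultaneously closes the duality gap and serves as a dual maximizer is exactly right, and the degenerate case $p=-\infty$ is correctly dispatched by weak duality (attainment being trivial there). The one step you leave as a sketch --- turning boundedness above into a supporting functional for the epigraph, with the non-verticality normalization --- is indeed the only point where infinite-dimensionality bites, and your description of how to handle it is the standard correct one; a fully rigorous write-up should also record that finiteness of $h(0)$ together with boundedness above near $0$ forces $h$ to be proper (it cannot take the value $-\infty$ anywhere), which is what licenses the continuity-and-subdifferentiability lemma you invoke. For what it is worth, the proof in the cited source \cite{BorZhu05} runs along different lines (their decoupling/sandwich-theorem machinery), so your route is a genuine alternative; its advantage is that the dual maximizer appears concretely as a subgradient of the value function, the same mechanism that underlies the extremality relations such as \eqref{eq:extcond1} used in the body of the paper.
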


\begin{lemma}\label{lem:subgrad}
Let $X$ be a Hilbert space and $F: X \to \R \cup \{+\infty\}$ a positively 1-homogeneous convex functional. Then, for each $x \in X$ we have
\begin{equation} 
\partial F(x) = \big\{ \xi \in \partial F(0) \ \vert \ \inner{\xi}{x} = F(x) \big\}.\label{eq:subgradx}
\end{equation}
\end{lemma}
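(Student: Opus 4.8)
The plan is to prove the two inclusions in \eqref{eq:subgradx} directly from the definition of the subgradient, exploiting the positive $1$-homogeneity of $F$. First I would record the elementary consequence that $F(0)=0$: since $F(\lambda \cdot 0)=\lambda F(0)$ for every $\lambda>0$, taking $\lambda = 2$ forces $F(0)=0$ (here $F$ is proper, so $F(0)$ is finite). With this normalization, the condition $\xi \in \partial F(0)$ reads simply $F(y) \gs \inner{\xi}{y}$ for all $y \in X$, and this is the description of $\partial F(0)$ I would use throughout.

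For the inclusion $\subseteq$, suppose $\xi \in \partial F(x)$, so that $F(y) \gs F(x) + \inner{\xi}{y-x}$ for all $y$. The key step is to test this inequality along the ray $y = \lambda x$ with $\lambda > 0$: using $F(\lambda x)=\lambda F(x)$, the inequality becomes $(\lambda-1)F(x) \gs (\lambda-1)\inner{\xi}{x}$, and choosing $\lambda>1$ and $0<\lambda<1$ in turn yields both $F(x) \gs \inner{\xi}{x}$ and $F(x) \ls \inner{\xi}{x}$, hence the equality $\inner{\xi}{x}=F(x)$. Substituting this equality back into the subgradient inequality gives $F(y) \gs \inner{\xi}{x}+\inner{\xi}{y}-\inner{\xi}{x}=\inner{\xi}{y}$ for all $y$, i.e. $\xi \in \partial F(0)$; together these two facts place $\xi$ in the right-hand set of \eqref{eq:subgradx}.

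The reverse inclusion $\supseteq$ is then immediate: if $\xi \in \partial F(0)$ and $\inner{\xi}{x}=F(x)$, then for any $y$ one has $F(x)+\inner{\xi}{y-x} = \inner{\xi}{x}+\inner{\xi}{y}-\inner{\xi}{x} = \inner{\xi}{y} \ls F(y)$, which is exactly the subgradient inequality characterizing $\xi \in \partial F(x)$.

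I expect no serious obstacle; the only mildly delicate point is the homogeneity/ray-testing trick that produces the equality $\inner{\xi}{x}=F(x)$, which is the crux of the argument and precisely where homogeneity is used. Alternatively, the statement follows in one line from the Fenchel--Young equality $\xi \in \partial F(x) \iff F(x)+F^*(\xi)=\inner{\xi}{x}$, once one notes that positive $1$-homogeneity makes $F^*$ the indicator function of $\partial F(0)$ (equal to $0$ on $\partial F(0)$ and $+\infty$ elsewhere, again by scaling $y \mapsto \lambda y$ in the supremum defining $F^*$); I would nonetheless keep the elementary version above, since it avoids invoking conjugates and stays self-contained.
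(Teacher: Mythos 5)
Your proof is correct, and its core mechanism differs from the paper's. For the inclusion $\partial F(x) \subseteq \{\xi \in \partial F(0) \mid \inner{\xi}{x} = F(x)\}$, the paper first proves the triangle inequality $F(y+z) \ls F(y)+F(z)$ (this is where convexity enters, combined with homogeneity), uses it in the chain $F(y-x) \gs F(y)-F(x) \gs \inner{\xi}{y-x}$ to conclude $\xi \in \partial F(0)$ first, and only afterwards extracts the equality $\inner{\xi}{x}=F(x)$ by playing the two subgradient inequalities against each other. You proceed in the opposite order and by a different device: testing the subgradient inequality along the ray $y=\lambda x$ and invoking homogeneity gives $(\lambda-1)F(x) \gs (\lambda-1)\inner{\xi}{x}$, hence the equality $\inner{\xi}{x}=F(x)$ first, after which $\xi \in \partial F(0)$ follows by substitution. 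Notably, your argument never uses convexity: with the paper's global-inequality definition of the subgradient, the identity \eqref{eq:subgradx} then holds for any positively $1$-homogeneous functional, whereas the paper's subadditivity step genuinely requires convexity. The reverse inclusion is the same one-line argument in both. Your Fenchel--Young alternative is also sound, and is in spirit what the paper itself does in Theorem \ref{th:EkeTem}, where $\mathrm{TV}^*$ is identified with the indicator function of $\overline{\mathcal{K}}$.

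One small inaccuracy, which you share with the paper: the normalization $F(0)=0$. Homogeneity gives $F(0)=2F(0)$, which only forces $F(0)\in\{0,+\infty\}$, and properness does \emph{not} rule out $F(0)=+\infty$: on $X=\R$, the function $F(x)=x$ for $x>0$ and $F(x)=+\infty$ for $x \ls 0$ is proper, convex and positively $1$-homogeneous with $F(0)=+\infty$, and for it \eqref{eq:subgradx} actually fails ($\partial F(1)=\{1\}$ while $\partial F(0)=\emptyset$). So your parenthetical justification is not valid as stated; strictly speaking the hypothesis $F(0)<+\infty$ should be added. The paper's proof asserts that ``homogeneity implies $F(0)=0$'' and thus glosses over the same point, which is harmless in context since the lemma is only applied to $F=\mathrm{TV}$, where $F(0)=0$ trivially.
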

\begin{proof}
First one can use convexity and homogeneity to derive a triangle inequality: For any $y,z \in X$ we have 
$$F(y+z)=2 F \left(\frac{y+z}{2}\right)\ls F(y)+F(z).$$
Let us prove \eqref{eq:subgradx}, for which we may assume that $F(x)<+\infty$, since otherwise both sets are empty. If $\xi$ is in the right set, for any $y \in X$, by definition of $\partial F(0)$ and since homogeneity implies $F(0)=0$,
$$F(y)\gs \scal{\xi}{y} \text{ and } -F(x)=-\scal{\xi}{x},$$
which can be summed together to obtain $\xi \in \partial F(x)$.

Now, let $\xi \in \partial F(x)$ and $y \in X$ be arbitrary. The triangle inequality above implies
\begin{equation}\label{eq:subtri}F(y-x) \gs F(y) - F(x) \gs \scal{\xi}{y-x}\end{equation}
since $y$ was arbitrary and $F(0)=0$, this means $\xi \in \partial F(0)$. Using $\xi \in \partial F(0)$, we obtain $F(y) \gs \scal{\xi}{y}$, which subtracted from the second inequality in \eqref{eq:subtri} provides us with $F(x) \ls \scal{\xi}{x}$. The opposite inequality follows again from $\xi \in \partial F(0)$, finishing the proof.
\end{proof}

\begin{prop}[Layer-cake formula,  \cite{LieLos01}, Theorem 1.13]
 Let $u \in L^1(\Omega)$ be nonnegative. Then, one has
 \begin{equation}
  \int_{\Omega} u = \int_0^\infty | \{ u > t \} | \dd t. \label{eq:layercake}
 \end{equation}
\end{prop}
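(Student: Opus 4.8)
The plan is to deduce the identity from Tonelli's theorem applied to the product space $\Omega \times (0,\infty)$. The starting point is the elementary pointwise observation that for any nonnegative real number $s$ one has
\[
s = \int_0^\infty 1_{\{t < s\}} \dd t,
\]
since the integrand equals $1$ exactly when $t < s$ and $0$ otherwise. Applying this with $s = u(x)$ and recalling that $\{u > t\} = \{x \in \Omega : u(x) > t\}$, I would write
\[
u(x) = \int_0^\infty 1_{\{u > t\}}(x) \dd t \quad \text{for every } x \in \Omega.
\]

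Integrating this identity over $\Omega$ and exchanging the order of integration then yields
\[
\int_\Omega u \dd x = \int_\Omega \int_0^\infty 1_{\{u > t\}}(x) \dd t \dd x = \int_0^\infty \int_\Omega 1_{\{u > t\}}(x) \dd x \dd t = \int_0^\infty \abs{\{u > t\}} \dd t,
\]
where the last equality uses that $\int_\Omega 1_{\{u > t\}} \dd x = \abs{\{u > t\}}$ by definition of the characteristic function. This is exactly the claimed formula.

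The only point requiring care, and the step I would treat as the main obstacle, is the justification of the interchange of integrals. Tonelli's theorem applies because the integrand $(x,t) \mapsto 1_{\{u > t\}}(x)$ is nonnegative and measurable on $\Omega \times (0,\infty)$: indeed, this function is the characteristic function of the subgraph $\{(x,t) : 0 < t < u(x)\}$, which is a measurable subset of the product since $u$ is measurable (it is the preimage of $(0,\infty)$ under the measurable map $(x,t) \mapsto u(x) - t$). Nonnegativity removes any integrability concern for the swap, so Tonelli rather than Fubini suffices, while the assumption $u \in \L^1(\Omega)$ guarantees that both sides are finite.
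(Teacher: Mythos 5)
Your proof is correct: the pointwise identity $u(x)=\int_0^\infty 1_{\{u>t\}}(x)\dd t$, the Tonelli interchange, and the measurability of the subgraph $\set{(x,t) \,\middle\vert\, 0<t<u(x)}$ as a preimage under $(x,t)\mapsto u(x)-t$ are all justified as stated. Note that the paper itself gives no proof of this proposition --- it is imported verbatim as Theorem 1.13 of the cited reference \cite{LieLos01} --- and your argument is exactly the standard Fubini--Tonelli proof found there, so there is no substantive difference in approach to report; the only cosmetic remark is that the hypothesis $u\in\L^1(\Omega)$ is not actually needed for the identity (Tonelli makes both sides equal even when infinite), it only ensures finiteness.
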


\begin{definition}[Equiintegrability]\label{def:equiint}
Let $(v_s) \subset L^2(\R^2)$ be a family of functions. We say that $(v_s)$ is equiintegrable if for each $\epsilon >0$ there are numbers $\delta >0$ and $R>0$ such that for every measurable subset $F$ with $|F|< \delta$ and all $s$,
\begin{equation*}\left(\int_F |v_s|^2\right)^{1/2} \ls \epsilon \quad \text{and} \quad \left(\int_{\R^2 \setminus B(0,R)} |v_s|^2\right)^{1/2} \ls \epsilon.\end{equation*}
It can be checked directly that a sequence $(v_n)$ that converges strongly in $L^2(\R^2)$ is necessarily equiintegrable.
\end{definition}

\begin{prop}[Coarea formula for functions with bounded variation, \cite{AmbFusPal00}, Theorem 3.40]
 Let $u \in \mathrm{BV}(\Omega)$. Then one has
 \begin{equation} \TVO{u} = \int_{- \infty}^{+ \infty} \per(\{u > t\}) \dd t.\label{eq:coarea}\end{equation}
\end{prop}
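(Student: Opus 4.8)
The plan is to establish the two inequalities in \eqref{eq:coarea} separately, writing $E_t := \{u > t\}$. For the bound $\TVO{u} \ls \int_{-\infty}^{+\infty}\per(E_t)\dd t$, I would argue directly from the definition \eqref{eq:defTV}. Fixing a test field $z \in C^\infty_0(\Omega;\R^2)$ with $\norm{z}_{\L^\infty(\Omega)} \ls 1$, I would insert the Cavalieri (layer-cake) identity $u = \int_0^{+\infty} 1_{\{u>t\}}\dd t - \int_{-\infty}^0 1_{\{u\ls t\}}\dd t$ into $\int_\Omega u\,\div z$. After exchanging the order of integration by Fubini and using $\int_\Omega \div z = 0$ (valid since $z$ is compactly supported), the sublevel contribution can be rewritten through the superlevel sets via $1_{\{u\ls t\}} = 1 - 1_{E_t}$, yielding $\int_\Omega u\,\div z = \int_{-\infty}^{+\infty}\left(\int_{E_t}\div z\right)\dd t$. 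Since each inner integral is $\ls \per(E_t)$ by the very definition of perimeter, taking the supremum over admissible $z$ gives the inequality.

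For the reverse inequality the idea is to reduce to smooth functions. By the standard strict approximation theorem in $\BV$ there is a sequence $u_k \in C^\infty(\Omega)\cap\BV(\Omega)$ with $u_k \to u$ in $\L^1(\Omega)$ and $\TVO{u_k} \to \TVO{u}$. For such smooth $u_k$ the classical coarea formula of geometric measure theory gives $\TVO{u_k} = \int_\Omega \abs{\nabla u_k}\dd x = \int_{-\infty}^{+\infty}\mathcal H^1(\{u_k = t\}\cap\Omega)\dd t = \int_{-\infty}^{+\infty}\per(\{u_k > t\})\dd t$, where Sard's theorem guarantees that for almost every $t$ the level set is a smooth curve whose length equals the perimeter of the corresponding superlevel set. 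To pass to the limit I would use the identity $\int_{-\infty}^{+\infty}\int_\Omega \abs{1_{\{u_k>t\}} - 1_{E_t}}\dd x\,\dd t = \int_\Omega \abs{u_k - u}\dd x \to 0$ (again Fubini), which shows that along a subsequence $1_{\{u_k>t\}}\to 1_{E_t}$ in $\L^1(\Omega)$ for almost every $t$. Lower semicontinuity of the perimeter then gives $\per(E_t)\ls\liminf_k \per(\{u_k>t\})$ for a.e. $t$, and Fatou's lemma yields $\int_{-\infty}^{+\infty}\per(E_t)\dd t \ls \liminf_k \int_{-\infty}^{+\infty}\per(\{u_k>t\})\dd t = \lim_k \TVO{u_k} = \TVO{u}$.

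The main obstacle is the reverse inequality, and specifically the bookkeeping of the various almost-everywhere-in-$t$ statements. The smooth coarea formula and the identification of level-set length with the perimeter of a superlevel set both rest on Sard's theorem, and the limiting argument is delicate because the level-set convergence $1_{\{u_k>t\}}\to 1_{E_t}$ holds only for a.e. $t$ and only along a subsequence; one must check that this is compatible with the simultaneous use of lower semicontinuity and of Fatou's lemma. By contrast, the forward inequality is essentially routine once the Cavalieri decomposition is in place, the only subtle point being the correct treatment of the sublevel term so that no spurious constant survives after integrating $\div z$.
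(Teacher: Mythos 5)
The paper itself gives no proof of this proposition: it is an auxiliary result in the appendix, quoted directly from \cite{AmbFusPal00} (Theorem 3.40). Your argument is correct and is essentially the standard proof found in that reference (and in Evans--Gariepy): the Cavalieri decomposition plus Fubini and $\int_\Omega \div z = 0$ for the inequality $\TVO{u} \ls \int_{-\infty}^{+\infty} \per(\{u>t\})\dd t$, and strict smooth approximation combined with the classical coarea formula, Sard's theorem, lower semicontinuity of the perimeter along a single subsequence for a.e.\ $t$, and Fatou's lemma for the reverse inequality, which is exactly the right way to organize the almost-everywhere-in-$t$ bookkeeping you were worried about.
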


\begin{definition}
\label{def:relativeper}
 Since $D1_E$ is a Radon measure, one can consider the perimeter of $E$ in every Borel subset $F$, which we denote by $$P(E\, ;\, F):=|D1_E|(F).$$ 
\end{definition}

\begin{definition}[Reduced boundary and unit normal]
For any set $E$ with finite perimeter in $\Omega$, one can define its reduced boundary $\partial^\ast E$. One says that $x \in \supp(1_E)$ belongs to $\partial^\ast E$ if
$$ \lim_{r \to 0^+} \frac{D1_E(B(x,r))}{|D1_E|(B(x,r))} \quad \text{exists and belongs to } S^1.$$
For $x \in \partial^\ast E$, one can define the measure theoretic outer normal $\nu_E$ to $E$ by
$$ \nu_E(x) := \lim_{r \to 0^+} \frac{D1_E(B(x,r))}{|D1_E|(B(x,r))}.$$
\label{def:redbound}
\end{definition}

\begin{theorem}[De Giorgi, \cite{Giu84}, Theorem 4.4]
For $E$ with finite perimeter in $\Omega$, one has
$$ \per(E \, ; \, \Omega) = \mathcal H^1(\partial^\ast E \cap \Omega).$$
\label{th:degiorgi}
\end{theorem}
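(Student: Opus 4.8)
Writing $\mu := |D1_E|$ for the perimeter measure on $\Omega$, the assertion is equivalent to the identity of Radon measures $\mu(A) = \mathcal{H}^1(\partial^\ast E \cap A)$ for every Borel set $A \subseteq \Omega$, since the choice $A = \Omega$ then recovers $\per(E\,;\,\Omega) = \mathcal{H}^1(\partial^\ast E \cap \Omega)$. The plan is the classical blow-up argument. Fixing $x \in \partial^\ast E$, I would study the rescaled sets $E_{x,r} := (E-x)/r$ as $r \to 0^+$. A uniform perimeter bound for the $1_{E_{x,r}}$ on each fixed ball (coming from the relative isoperimetric inequality together with finiteness of $\mu$) provides $\mathrm{BV}$-compactness, so along subsequences $1_{E_{x,r}} \to 1_{E_0}$ strongly in $\L^1_{\mathrm{loc}}(\R^2)$. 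Using the very definition of the reduced boundary (Definition~\ref{def:redbound}), namely the existence of the measure-theoretic normal $\nu_E(x)$, one identifies every such limit $E_0$ with the half-plane $H := \set{ y \in \R^2 : y \cdot \nu_E(x) \ls 0 }$. This step, converting the bare existence of the limit defining $\nu_E(x)$ into rigidity of the blow-up, is where the monotonicity and density estimates of the theory enter.

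From the blow-up I would extract the density identity
\[
\lim_{r \to 0^+} \frac{\mu(\Br)}{2r} = 1 \qquad \text{for every } x \in \partial^\ast E,
\]
where $2 = \mathcal{H}^1(\partial H \cap B(0,1))$ is the length of a diameter of the unit disk and the scaling $\per(E_{x,r}\,;\, B(0,1)) = \mu(\Br)/r$ is used. Lower semicontinuity of the perimeter under $\L^1_{\mathrm{loc}}$ convergence gives at once $\liminf_{r} \mu(\Br)/(2r) \gs 1$; the matching bound $\limsup_{r}\mu(\Br)/(2r) \ls 1$ is more delicate and is obtained by upgrading the $\L^1_{\mathrm{loc}}$ convergence of the blow-ups to convergence of their perimeters (absence of loss of mass), restricting to the radii $r$ for which $\mu(\partial \Br)=0$ and the slicing identities hold.

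It remains to promote this pointwise density statement to the measure identity. For this I would first establish that $\partial^\ast E$ is $1$-rectifiable, i.e.\ covered up to an $\mathcal{H}^1$-null set by countably many $C^1$ curves; granting rectifiability, the density identity above together with a Besicovitch-type differentiation theorem, comparing $\mu$ and the measure $A \mapsto \mathcal{H}^1(\partial^\ast E \cap A)$ on concentric balls, forces the two measures to coincide, which is the claim. The main obstacle is exactly this rectifiability step coupled with the rigidity of the blow-up: turning the mere existence of $\nu_E(x)$ into the strong statement that the rescalings converge to a half-plane is the technical core of De~Giorgi's structure theorem, carried out in \cite{Giu84}; once rectifiability and the half-plane blow-up are in hand, the density and differentiation steps are comparatively routine.
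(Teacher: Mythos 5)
The paper never proves this statement: Theorem~\ref{th:degiorgi} is quoted in the appendix as a classical result, with the proof delegated entirely to the citation \cite{Giu84} (it is De~Giorgi's structure theorem). So there is no in-paper argument to compare yours against; the relevant comparison is with the classical proof in the cited reference, and your outline is indeed that proof: compactness of the rescalings $E_{x,r}$, rigidity of the blow-up limit to a half-plane, the density identity $\lim_{r\to 0^+}\mu(\Br)/(2r)=1$ on $\partial^\ast E$, rectifiability of $\partial^\ast E$, and a Besicovitch differentiation argument to pass from pointwise densities to equality of the measures. As a roadmap this is accurate, and you correctly locate the difficulty. But note that what you wrote is not a proof: the two steps you yourself flag as the technical core (half-plane rigidity of the blow-up and rectifiability) \emph{are} the content of De~Giorgi's theorem, and you defer them to the very reference the paper cites, so as a self-contained argument the proposal is circular in exactly the place where all the work lies.

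One concrete inaccuracy in the part you do spell out: the uniform perimeter bound needed for $\BV$-compactness of the blow-ups cannot come from ``the relative isoperimetric inequality together with finiteness of $\mu$.'' The relative isoperimetric inequality bounds perimeter from \emph{below} by volume, and finiteness of $\mu$ only gives $\per(E_{x,r}\,;\,B(0,1)) = \mu(\Br)/r \ls \mu(\Omega)/r$, which blows up as $r \to 0^+$. What is actually needed is the upper density estimate $\mu(\Br) \ls C r$ at points of $\partial^\ast E$, and this is proved from the defining property of the reduced boundary: for $r$ small one has $\abs{D1_E(\Br)} \gs \tfrac{1}{2}\abs{D1_E}(\Br)$, and the numerator is controlled via Gauss--Green by $\mathcal H^1(\partial \Br) = 2\pi r$. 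Without this estimate the compactness step, and with it the whole blow-up scheme, does not get off the ground.
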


\begin{definition}For a Lebesgue set $E \subset \R^2$, we use the notations $E^{(1)}$ and $E^{(0)}$ for the points where the density of $E$ is $1$ and $0$ respectively. That is, for $s\in \{0,1\}$ we have
$$E^{(s)} =\left\{x \in \R^2 \ \middle\vert\ \lim_{r\to 0}\frac{|B(x,r) \cap E|}{|B(x,r)|}=s\right\}.$$
Furthermore, we note that by the Lebesgue differentiation theorem
$$|E^{(0)} \Delta (\R^2 \setminus E)|=0 \text{ and } |E^{(1)} \Delta E|=0.$$
\label{def:densitypoints}
\end{definition}

\begin{theorem}[Federer,  \cite{Mag12}, Theorem 16.2]
Let $E$ have finite perimeter in $\Omega$ and let 
$$ \partial^e E := \R^2 \setminus (E^{(0)} \cap E^{(1)}).$$
Then, $\partial^\ast E \subset \partial^e E$ and
$$ \mathcal H^1(\partial^e E \setminus \partial^\ast E) = 0.$$
\label{th:federer}
\end{theorem}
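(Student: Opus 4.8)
The plan is to treat the two assertions separately, leaning on De Giorgi's structure theorem (of which Theorem~\ref{th:degiorgi} is the measure-theoretic half) and on the relative isoperimetric inequality in the plane; here $\partial^e E = \R^2 \setminus (E^{(0)} \cup E^{(1)})$ is the set of points whose density relative to $E$ is neither $0$ nor $1$, and $|D1_E|$ is a finite Radon measure since $E$ has finite perimeter. For the inclusion $\partial^\ast E \subset \partial^e E$ I would argue by blow-up: if $x \in \partial^\ast E$ then the measure-theoretic normal $\nu_E(x)$ exists, and De Giorgi's theorem yields that the rescaled sets $(E-x)/r$ converge in $\L^1_{\text{loc}}(\R^2)$ as $r \to 0^+$ to the half-plane $H = \set{y \in \R^2 \ \middle\vert \ \inner{y}{\nu_E(x)} \ls 0}$. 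Passing to the limit in $|\Br \cap E|/|\Br|$ gives density $1/2$ at $x$, which is neither $0$ nor $1$, so $x \in \partial^e E$.

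The substance of the theorem is the measure-zero statement, and its crux is a uniform lower bound on the density of the perimeter measure at essential boundary points: there is an absolute constant $c > 0$ with
$$\liminf_{r \to 0^+} \frac{\per(E; \Br)}{r} \gs c \quad \text{for every } x \in \partial^e E.$$
This is the step I expect to be the main obstacle. To prove it I would combine the relative isoperimetric inequality $\min\big(|E \cap \Br|, |\Br\setminus E|\big)^{1/2} \ls C\, \per(E;\Br)$ with the observation that, because $x$ is a point of density neither $0$ nor $1$, the quantity $\min\big(|E \cap \Br|, |\Br\setminus E|\big)$ cannot be $o(r^2)$ along every sequence $r \to 0^+$. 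Making this quantitative is the delicate point: setting $m(r) := |E \cap \Br|$, one has $m'(r) = \mathcal H^1(E \cap \partial \Br)$ for a.e.\ $r$ by the coarea formula \eqref{eq:coarea}, and feeding the isoperimetric inequality into a differential inequality for $m$ (and symmetrically for $|\Br| - m(r)$) forces the perimeter density to stay bounded below.

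Granting the density bound, I would finish with the standard comparison between Hausdorff measure and a Radon measure of positive lower density: applied to $\mu := |D1_E| = \per(E; \cdot)$ with $s = 1$, it yields $\restr{\mathcal H^1}{\partial^e E} \ls C\, \mu$ as measures, that is $\mathcal H^1(B) \ls C\, |D1_E|(B)$ for every Borel $B \subset \partial^e E$. Now De Giorgi's theorem gives $|D1_E| = \restr{\mathcal H^1}{\partial^\ast E}$, so $|D1_E|$ assigns zero mass to any set disjoint from $\partial^\ast E$. Choosing $B = \partial^e E \setminus \partial^\ast E$ therefore gives
$$\mathcal H^1(\partial^e E \setminus \partial^\ast E) \ls C\, |D1_E|(\partial^e E \setminus \partial^\ast E) = 0,$$
which is the claim.
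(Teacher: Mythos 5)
The paper never proves this statement---it is quoted in the appendix verbatim from \cite{Mag12} (Theorem 16.2)---so the relevant comparison is with the proof in that reference, whose skeleton you have reproduced correctly: blow-up for the inclusion $\partial^\ast E \subset \partial^e E$, a perimeter-density bound at points of $\partial^e E$ via the relative isoperimetric inequality, and then the comparison between $\mathcal H^1$ and a Radon measure combined with $\abs{D1_E}=\restr{\mathcal H^1}{\partial^\ast E}$. (You also correctly read the definition as $\partial^e E = \R^2\setminus(E^{(0)}\cup E^{(1)})$, silently fixing the paper's typo.) The inclusion step and the final step are sound. The genuine gap is that your key lemma is false as stated, on both counts: the $\liminf$ and the absolute constant. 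Take $E=\bigcup_k \set{a_k\ls \abs{x} \ls 2a_k}$ with $a_{k+1}=a_k/M_k$, $M_k\gs 100$, $M_k\to\infty$. The origin belongs to $\partial^e E$, since the density ratio at radius $2a_k$ is at least $3/4$ while at radius $a_k$ it is at most $4/M_k^2$; yet $\per(E;B(0,a_k))=6\pi\sum_{j>k}a_j\ls 7\pi a_k/M_k$, so $\per(E;B(0,a_k))/a_k\to 0$ and the $\liminf$ vanishes at this essential boundary point. (No contradiction with Federer: such points form an $\mathcal H^1$-null set, here a single point.) Even the $\limsup$ admits no absolute lower bound: for $E=\bigcup_j B\bigl((2^{-j},0),\eps 2^{-j}\bigr)$ with $\eps$ small, the origin has upper and lower densities comparable to $\eps^2$, hence lies in $\partial^e E$, while $\per(E;B(0,r))/r$ is comparable to $\eps$ for all small $r$. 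Your own wording signals the inconsistency: ``cannot be $o(r^2)$ along every sequence'' only produces good radii along \emph{some} sequence $r_k\to 0$, i.e.\ a $\limsup$ statement; and the differential-inequality mechanism you invoke is the density-estimate technique for minimizers (as in Lemma \ref{lem:densest}), which has no analogue here because a general set of finite perimeter carries no minimality to compare against.

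The repair is standard and cheap. What is true, and all that is needed, is the pointwise statement: for every $x\in\partial^e E$ one has $\limsup_{r\to 0^+}\per(E;\Br)/r>0$. Indeed $u(r):=\abs{E\cap \Br}/\abs{\Br}$ is continuous in $r$, so if $\per(E;\Br)/r\to 0$ then the relative isoperimetric inequality forces $\min(u,1-u)\to 0$, and by the intermediate value theorem $u$ must eventually remain below $1/4$ or above $3/4$; in the first case $u\to 0$ and $x\in E^{(0)}$, in the second $x\in E^{(1)}$, so in either case $x\notin\partial^e E$. Uniformity is then recovered by countable decomposition rather than by a universal constant: write $N:=\partial^e E\setminus\partial^\ast E=\bigcup_k N_k$ with
\begin{equation*}
N_k:=\set{x\in N \,\middle\vert\, \limsup_{r\to 0^+}\frac{\per(E;\Br)}{r}\gs \frac1k}.
\end{equation*}
The comparison theorem between Hausdorff measure and Radon measures---which requires only the \emph{upper} density $\limsup$, so abandoning the $\liminf$ costs nothing in this step---gives $\mathcal H^1(N_k)\ls C\,k\,\abs{D1_E}(N_k)$, and $\abs{D1_E}(N_k)=0$ because $\abs{D1_E}=\restr{\mathcal H^1}{\partial^\ast E}$ (Theorem \ref{th:degiorgi}, applied on Borel sets) and $N_k\cap\partial^\ast E=\emptyset$. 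Hence $\mathcal H^1(N_k)=0$ for every $k$, so $\mathcal H^1(N)=0$, which is the claim. With the middle lemma restated and proved this way, your argument coincides with the proof in \cite{Mag12}.
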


\begin{theorem}[\cite{Mag12}, Theorem 16.3]
Let $E$ and $F$ be two finite perimeter sets in $\Omega$. Then, for every Borel set $G \subset \R^2$, one has
\begin{equation}
 \per(E \cap F \,;\, G) = \per(E \, ; \, F^{(1)} \cap G) + \per(F \, ; \, E^{(1)} \cap G) + \mathcal H^1 \left( \{\nu_E = \nu_F \} \cap G \right).
 \label{eq:1610}
\end{equation}
and
\begin{equation}
 \per(E \setminus F \,;\, G) = \per(E \, ; \, F^{(0)} \cap G) + \per(F \, ; \, E^{(1)} \cap G) + \mathcal H^1 \left( \{\nu_E = -\nu_F \} \cap G \right).
 \label{eq:1611}
\end{equation}
\end{theorem}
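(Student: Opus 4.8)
The plan is to reduce both identities to a single set-theoretic decomposition of the reduced boundary and then to verify that decomposition by a blow-up analysis. By De Giorgi's theorem (Theorem~\ref{th:degiorgi}) one has $\per(E \cap F \,;\, G) = \mathcal{H}^1(\partial^\ast(E\cap F) \cap G)$ and, for instance, $\per(E \,;\, F^{(1)} \cap G) = \mathcal{H}^1(\partial^\ast E \cap F^{(1)} \cap G)$, so it suffices to prove that, up to $\mathcal{H}^1$-negligible sets,
\begin{equation*}
\partial^\ast(E\cap F) = \left[\partial^\ast E \cap F^{(1)}\right] \cup \left[\partial^\ast F \cap E^{(1)}\right] \cup \{\nu_E = \nu_F\},
\end{equation*}
where $\{\nu_E = \nu_F\} \subset \partial^\ast E \cap \partial^\ast F$ and the three sets on the right are pairwise $\mathcal{H}^1$-essentially disjoint. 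Intersecting with the Borel set $G$ and summing the corresponding $\mathcal{H}^1$-masses then yields \eqref{eq:1610}.

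To obtain this decomposition I would apply Federer's theorem (Theorem~\ref{th:federer}) simultaneously to $E$ and $F$, which partitions $\R^2$, up to an $\mathcal{H}^1$-null set, into the nine pieces obtained by intersecting one of $\{E^{(0)}, E^{(1)}, \partial^\ast E\}$ with one of $\{F^{(0)}, F^{(1)}, \partial^\ast F\}$, and determine the local structure of $E\cap F$ on each piece by blow-up. Whenever $x$ has density $0$ for $E$ or for $F$ one gets $x \in (E\cap F)^{(0)}$, while $(E^{(1)}, F^{(1)})$ gives $x \in (E \cap F)^{(1)}$; none of these contribute to $\partial^\ast(E\cap F)$. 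On $\partial^\ast E \cap F^{(1)}$ the blow-up of $E\cap F$ is the half-plane $\{y : y\cdot \nu_E \ls 0\}$, so $x \in \partial^\ast(E\cap F)$ with $\nu_{E\cap F}(x)=\nu_E(x)$, and symmetrically on $E^{(1)} \cap \partial^\ast F$. On $\partial^\ast E \cap \partial^\ast F$ the answer depends on the normals: if $\nu_E=\nu_F$ the two half-planes coincide and $x \in \partial^\ast(E\cap F)$, whereas if $\nu_E = -\nu_F$ they are complementary and $x \in (E\cap F)^{(0)}$.

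The step I expect to be the main obstacle is disposing of the transversal set $\{x \in \partial^\ast E \cap \partial^\ast F : \nu_E(x) \neq \pm\,\nu_F(x)\}$, where the blow-up of $E\cap F$ is a proper wedge of density strictly between $0$ and $1$, so that $x$ is neither a density point nor a reduced-boundary point of $E\cap F$; the identity can hold only because this set is $\mathcal{H}^1$-negligible. This is precisely where one must exploit that reduced boundaries are $1$-rectifiable: a standard result on the coincidence of approximate tangent lines of two rectifiable sets shows that $\mathcal{H}^1$-almost every point of $\partial^\ast E \cap \partial^\ast F$ admits a common tangent line, whence $\nu_E = \pm\,\nu_F$ there. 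Granting this, the three right-hand pieces are essentially disjoint and exhaust $\partial^\ast(E\cap F)$, which proves \eqref{eq:1610}.

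Finally, \eqref{eq:1611} follows by applying \eqref{eq:1610} to $E$ and the complement $F^c := \R^2 \setminus F$: since $E\setminus F = E \cap F^c$ and one has $(F^c)^{(1)} = F^{(0)}$, $(F^c)^{(0)} = F^{(1)}$, $\nu_{F^c} = -\nu_F$ and $\per(F^c \,;\, \cdot) = \per(F \,;\, \cdot)$, the first term becomes $\per(E \,;\, F^{(0)} \cap G)$, the second becomes $\per(F \,;\, E^{(1)} \cap G)$, and the contact term becomes $\mathcal{H}^1(\{\nu_E = -\nu_F\}\cap G)$, as claimed.
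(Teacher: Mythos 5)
The paper itself contains no proof of this statement: it is imported verbatim from \cite{Mag12} (Theorem 16.3) as an auxiliary tool in the appendix, so there is no in-paper argument to compare against. Your proposal is, in outline, the standard proof from the literature, and essentially the one in Maggi's book: reduce via De Giorgi's theorem to a decomposition of $\partial^\ast(E\cap F)$ up to $\mathcal{H}^1$-null sets, obtain that decomposition by applying Federer's theorem to $E$ and $F$ and analyzing densities/blow-ups on the resulting nine pieces, kill the transversal contact set, and deduce \eqref{eq:1611} from \eqref{eq:1610} by replacing $F$ with its complement. The complementation step is exactly right, and your treatment of the transversal set $\{\nu_E \neq \pm\nu_F\}$ via $1$-rectifiability of reduced boundaries and locality of approximate tangent lines is a valid (and standard) argument; an alternative is to note that at such points the blow-up of $E\cap F$ is a wedge of opening $\pi-\beta$, $\beta\in(0,\pi)$, so the density of $E\cap F$ there is $(\pi-\beta)/2\pi \notin \{0,\tfrac12,1\}$, placing these points in $\partial^e(E\cap F)\setminus\partial^\ast(E\cap F)$, which is $\mathcal{H}^1$-null.

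Two steps do need repair. First, your opening move applies De Giorgi's theorem to $E\cap F$, which presupposes that $E\cap F$ has finite perimeter; this is part of what must be proved and is nowhere established in your write-up (it follows quickly by mollifying $1_E$ and $1_F$, using the product rule, and invoking lower semicontinuity of the total variation to get $\per(E\cap F)\ls \per(E)+\per(F)$). Second, and more substantively, the implication ``the blow-up of $E\cap F$ at $x$ is a half-plane, so $x\in\partial^\ast(E\cap F)$ with $\nu_{E\cap F}(x)=\nu_E(x)$'' is false pointwise: $\mathrm{L}^1_{\mathrm{loc}}$ convergence of blow-ups only controls volume densities, hence only yields $x\in\partial^e(E\cap F)$; at an individual point the perimeter density can still degenerate, so membership in the \emph{reduced} boundary does not follow. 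The correct conclusion---that $\mathcal{H}^1$-almost every such point lies in $\partial^\ast(E\cap F)$---requires one further application of Federer's theorem, this time to the set $E\cap F$ itself, which your proof never invokes (you apply Federer only to $E$ and $F$); this is also what the alternative wedge argument above needs. With that extra invocation, the inclusion of $\left[\partial^\ast E\cap F^{(1)}\right]\cup\left[\partial^\ast F\cap E^{(1)}\right]\cup\{\nu_E=\nu_F\}$ into $\partial^\ast(E\cap F)$ modulo $\mathcal{H}^1$-null sets is justified; note also that for the scalar identities \eqref{eq:1610}--\eqref{eq:1611} the normal identification $\nu_{E\cap F}=\nu_E$ is not needed at all---only the set decomposition and pairwise disjointness enter---so the rest of your argument then goes through.
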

\end{appendix}

\bibliographystyle{plain}
\bibliography{conv_source_cond}

\begin{thebibliography}{10}

\bibitem{AcaVog94}
R.~Acar and C.~R. Vogel.
\newblock Analysis of bounded variation penalty methods for ill-posed problems.
\newblock {\em Inverse Probl.}, 10(6):1217--1229, 1994.

\bibitem{AmbCasMasMor01}
L.~Ambrosio, V.~Caselles, S.~Masnou, and J.-M. Morel.
\newblock Connected components of sets of finite perimeter and applications to
  image processing.
\newblock {\em J. Eur. Math. Soc. (JEMS)}, 3(1):39--92, 2001.

\bibitem{AmbFusPal00}
L.~Ambrosio, N.~Fusco, and D.~Pallara.
\newblock {\em Functions of bounded variation and free discontinuity problems}.
\newblock Oxford Mathematical Monographs. Oxford University Press, New York,
  2000.

\bibitem{BarGonTam03}
E.~Barozzi, E.~Gonzalez, and U.~Massari.
\newblock The mean curvature of a {L}ipschitz continuous manifold.
\newblock {\em Atti Accad. Naz. Lincei Cl. Sci. Fis. Mat. Natur. Rend. Lincei
  (9) Mat. Appl.}, 14(4):257--277 (2004), 2003.

\bibitem{BarGonTam87}
E.~Barozzi, E.~Gonzalez, and I.~Tamanini.
\newblock The mean curvature of a set of finite perimeter.
\newblock {\em Proc. Amer. Math. Soc.}, 99(2):313--316, 1987.

\bibitem{BarMas82}
E.~Barozzi and U.~Massari.
\newblock Regularity of minimal boundaries with obstacles.
\newblock {\em Rend. Sem. Mat. Univ. Padova}, 66:129--135, 1982.

\bibitem{BauCom17}
H.~H. Bauschke and P.~L. Combettes.
\newblock {\em Convex analysis and monotone operator theory in {H}ilbert
  spaces}.
\newblock CMS Books in Mathematics/Ouvrages de Math\'ematiques de la SMC.
  Springer, New York, second edition, 2011.

\bibitem{BorZhu05}
J.~M. Borwein and Q.~J. Zhu.
\newblock {\em Techniques of variational analysis}, volume~20 of {\em CMS Books
  in Mathematics/Ouvrages de Math\'ematiques de la SMC}.
\newblock Springer-Verlag, New York, 2005.

\bibitem{BurFriOshSch07}
M.~Burger, K.~Frick, S.~Osher, and O.~Scherzer.
\newblock Inverse total variation flow.
\newblock {\em Multiscale Model. Simul.}, 6(2):365--395 (electronic), 2007.

\bibitem{BurOsh04}
M.~Burger and S.~Osher.
\newblock Convergence rates of convex variational regularization.
\newblock {\em Inverse Prob.}, 20(5):1411--1421, 2004.

\bibitem{ChaDuvPeyPoo17}
A.~Chambolle, V.~Duval, G.~Peyr{\'e}, and C.~Poon.
\newblock Geometric properties of solutions to the total variation denoising
  problem.
\newblock {\em Inverse Prob.}, 33(1):015002, 2017.

\bibitem{ChaLevLuc11}
A.~Chambolle, S.~E. Levine, and B.~J. Lucier.
\newblock An upwind finite-difference method for total variation-based image
  smoothing.
\newblock {\em SIAM J. Imaging Sci.}, 4(1):277--299, 2011.

\bibitem{ChaPoc11}
A.~Chambolle and T.~Pock.
\newblock A first-order primal-dual algorithm for convex problems with
  applications to imaging.
\newblock {\em J. Math. Imaging Vision}, 40(1):120--145, 2011.

\bibitem{DelZol11}
M.~C. Delfour and J.-P. Zol{\'e}sio.
\newblock {\em Shapes and geometries}, volume~22 of {\em Advances in Design and
  Control}.
\newblock Society for Industrial and Applied Mathematics (SIAM), Philadelphia,
  PA, second edition, 2011.

\bibitem{DuvPey15}
V.~Duval and G.~Peyr\'e.
\newblock Exact support recovery for sparse spikes deconvolution.
\newblock {\em Found. Comput. Math.}, 15(5):1315--1355, 2015.

\bibitem{EkeTem99}
I.~Ekeland and R.~T{\'e}mam.
\newblock {\em Convex analysis and variational problems}, volume~28 of {\em
  Classics in Applied Mathematics}.
\newblock Society for Industrial and Applied Mathematics (SIAM), Philadelphia,
  PA, english edition, 1999.

\bibitem{FriSch10}
K.~Frick and O.~Scherzer.
\newblock Regularization of ill-posed linear equations by the non-stationary
  augmented {L}agrangian method.
\newblock {\em J. Integral Equations Appl.}, 22(2):217--257, June 2010.

\bibitem{Giu84}
E.~Giusti.
\newblock {\em Minimal Surfaces and Functions of Bounded Variation}.
\newblock Birkh{\"a}user, Boston, 1984.

\bibitem{Glo84}
R.~Glowinski.
\newblock {\em Numerical methods for nonlinear variational problems}.
\newblock Springer Series in Computational Physics. Springer-Verlag, New York,
  1984.

\bibitem{GonMasTam93}
E.~Gonzales, U.~Massari, and I.~Tamanini.
\newblock Boundaries of prescribed mean curvature.
\newblock {\em Atti Accad. Naz. Lincei Cl. Sci. Fis. Mat. Natur. Rend. Lincei
  (9) Mat. Appl.}, 4(3):197--206, 1993.

\bibitem{HelAnd87}
H.~Hellsten and L.-E. Andersson.
\newblock An inverse method for the processing of synthetic aperture radar
  data.
\newblock {\em Inverse Problems}, 3(1):111--124, 1987.

\bibitem{LieLos01}
E.~H. Lieb and M.~Loss.
\newblock {\em Analysis}, volume~14 of {\em Graduate Studies in Mathematics}.
\newblock American Mathematical Society, Providence, RI, second edition, 2001.

\bibitem{Mag12}
F.~Maggi.
\newblock {\em Sets of finite perimeter and geometric variational problems},
  volume 135 of {\em Cambridge Studies in Advanced Mathematics}.
\newblock Cambridge University Press, 2012.

\bibitem{SchGraGroHalLen09}
O.~Scherzer, M.~Grasmair, H.~Grossauer, M.~Haltmeier, and F.~Lenzen.
\newblock {\em Variational methods in imaging}.
\newblock Number 167 in Applied Mathematical Sciences. Springer, New York,
  2009.

\bibitem{Ves01}
L.~Vese.
\newblock A study in the {BV} space of a denoising-deblurring variational
  problem.
\newblock {\em Appl. Math. Optim.}, 44(2):131--161, 2001.

\end{thebibliography}
\end{document}